      \theoremstyle{plain}
      \newtheorem{theorem}{Theorem}[section]
      \newtheorem{lemma}[theorem]{Lemma}
      \newtheorem{corollary}[theorem]{Corollary}
      \newtheorem{proposition}[theorem]{Proposition}
      \theoremstyle{definition}
      \newtheorem{definition}[theorem]{Definition}
      \newtheorem*{remark}{Remark}
\def\co{\colon\thinspace}
\def\SO{\mathrm{SO}(n,1)}
\def\H{\mathbb H^n}
\def\wM{\widehat{M}}
\def\cH{\overline{\mathbb{H}}^n}
\def\vol{\mathrm{Vol}}
\def\T{\mathcal{T}}
\def\Tub{\mathrm{Tub}}
\def\PSL{\mathrm{SO}(3,1)}
\def\O{\mathcal{O}}
\def\oM{\overline{M}}
\def\hom{\mathrm{Hom}}
      \def\@setcopyright{}
      \def\serieslogo@{}
\begin{document}

%


   \author{Sungwoon Kim}
   \address{School of Mathematics,
   KIAS, Hoegiro 85, Dongdaemun-gu,
   Seoul, 130-722, Republic of Korea}
   \email{sungwoon@kias.re.kr}

   \author{Inkang Kim}
   \address{School of Mathematics,
   KIAS, Hoegiro 85, Dongdaemun-gu,
   Seoul, 130-722, Republic of Korea}
   \email{inkang@kias.re.kr}





   \title[On deformation spaces of nonuniform hyperbolic lattices]{On deformation spaces of nonuniform hyperbolic lattices}


\begin{abstract}
Let $\Gamma$ be a nonuniform lattice acting on real hyperbolic $n$-space.
We show that in dimension greater than or equal to $4$, the volume of a representation is constant on each connected component of the representation variety of $\Gamma$ in $\SO$.
Furthermore, in dimensions $2$ and $3$, there is a semialgebraic subset of the representation variety such that the volume of a representation is constant on connected components of the semialgebraic subset.
Our approach gives a new proof of the local rigidity theorem for nonuniform hyperbolic lattices and the analogue of Soma's theorem, which shows that the number of orientable hyperbolic manifolds dominated by a closed, connected, orientable $3$-manifold is finite, for noncompact $3$-manifolds.
\end{abstract}

\footnotetext[0]{2000 {\sl{Mathematics Subject Classification: 51M10, 53C24}}
}

\footnotetext[1]{The first author was supported by Basic Science Research Program through the National Research Foundation of Korea(NRF) funded by the Ministry of Education, Science and Technology(NRF-2012R1A1A2040663) and the second author gratefully acknowledges the partial support
of NRF grant  (2010-0024171) and a warm support of IHES during his
stay.
}


   \keywords{}

   \thanks{}
   \thanks{}

   \dedicatory{}

   \date{}


   \maketitle



\section{Introduction}

Let $\Gamma$ be a lattice in $\SO$. Then an invariant $\vol(\rho)$ is associated to an arbitrary representation $\rho :\Gamma \rightarrow \SO$. This invariant is called the \emph{volume of a representation}. The definition of the volume of a representation depends on whether $\Gamma$ is a uniform lattice or not. In the uniform lattice case, there is a natural way to define the volume of a representation, as follows. To a representation $\rho :\Gamma \rightarrow \SO$, denote by $E_\rho$ the corresponding flat $\H$-bundle over $M=\Gamma\backslash \H$. Let $\omega_{\H}$ be the Riemannian volume form on $\H$ and $\omega_{E_\rho}$ be a closed $n$-form on $E_\rho$ by spreading $\omega_{\H}$ over the fibres of $E_\rho$. Let $s$ be a section of $E_\rho$. Then the volume of $\rho$ is defined as $$\vol(\rho)=\langle s^*\omega_{E_\rho}, [M] \rangle ,$$
where $s^*\omega_{E_\rho}$ is considered as a singular cohomology class in $H^n(M)$ and $[M]$ denotes the fundamental class of $M$. This definition is independent of the choice of a section.

Unfortunately, the definition as above does not work in the nonuniform lattice case. It has been modified to define the volume of a representation for nonuniform lattices in several ways. Dunfield \cite{Du99} first introduced the notion of pseudo-developing map to define the volume of a representation of a nonuniform lattice in $\mathrm{SO}(3,1)$. Then Francaviglia \cite{Fra}
proved that the definition of the volume of a representation given by Dunfield is well-defined for any representation of a nonuniform lattice in $\mathrm{SO}(3,1)$. The authors \cite{SKIK} give a definition of the volume of a representation for a nonuniform lattice in a semisimple Lie group through bounded cohomology, $\ell^1$-homology and simplicial volume. Bucher--Burger--Iozzi \cite{BBI} give a definition of the volume of a representation of a nonuniform hyperbolic lattice in $\SO$ in the language of bounded cohomology.
In fact, in the case of hyperbolic lattices, all definitions of the volume of a representation give the same value. For further details, see \cite[Section 6]{SKIK} (a similar proof works for any dimension) and Section \ref{sec:volume}.

As one can see, the definition of the volume of a representation is different depending on the uniform lattice or nonuniform lattice. However in either case, the volume satisfies a Milnor--Wood type inequality $$\vol(\rho) \leq \vol(M).$$
Moreover, the equality holds if and only if $\rho$ is conjugate to the lattice embedding $i:\Gamma \hookrightarrow \SO$ by an isometry, provided $n\geq 3$. This rigidity result for maximal representations recovers Mostow rigidity for hyperbolic manifolds. Dunfield \cite{Du99} provided a proof of the volume rigidity theorem in dimension $3$, following the proof of Thurston's strict version \cite[Theorem 6.4]{Th78} of Mostow's theorem with some details coming from Toledo's paper \cite{Tol}. Then Francaviglia and Klaff \cite{FK} proved a volume rigidity theorem for representations $\rho : \Gamma \rightarrow \mathrm{SO}(m,1)$ for $m\geq n\geq 3$.
Bucher--Burger--Iozzi \cite{BBI} give a complete proof of volume rigidity from the viewpoint of bounded cohomology.

The study of the set of values for the volume of a representation is closely related to the local rigidity theorem for hyperbolic manifolds. In the uniform lattice case, it is well known that the volume of a representation is constant on each connected component of $\mathrm{Hom}(\Gamma,\SO)$.
Hence the set is discrete. This actually follows from the rigidity of secondary characteristic classes associated to a flat connection on a hyperbolic manifold. See \cite{Rez} by Reznikov for details. Besson--Courtois--Gallot \cite{BCG} gave a geometric proof for this by using the Schl\"{a}fli formula. The constancy of the volume of a representation on connected components and the volume rigidity theorem recover the local rigidity theorem for uniform hyperbolic lattices.

The set of values for the volume of a representation in the nonuniform lattice case is a little different. The set of values for the volume of a representation in dimensions $2$ or $3$ is not discrete anymore. In fact, an open interval is contained in the set of values in those cases. In even dimensions greater than or equal to $4$, Bucher--Burger--Iozzi prove that the set of values for the volume of a representation is an integer up to a universal constant. In accordance with their result, it is natural to expect that the set of values for the volume of a representation is discrete in any dimension greater than or equal to $4$. However, this has not yet been known. The aim of the paper is to explore the values of the volume of a representation on the representation variety of a nonuniform hyperbolic lattice in $\SO$.

\begin{theorem}\label{thm:1.1}
Let $n\geq 4$ and $\Gamma$ be a nonuniform lattice in $\SO$. Let $\rho_t : \Gamma \rightarrow \SO$ be a $C^1$-smooth path on $\hom(\Gamma,\SO)$. Then $\vol(\rho_t)$ is constant.
\end{theorem}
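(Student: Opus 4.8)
The plan is to show that the derivative $\frac{d}{dt}\vol(\rho_t)$ vanishes identically, which by connectedness of the path forces $\vol(\rho_t)$ to be constant. Let me think about how to access this derivative.

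The volume of a representation, in the hyperbolic setting, is computed by pulling back the volume form via a (pseudo-)developing map associated to $\rho_t$. So the first thing I would want is a smooth family of such maps: given the $C^1$-path $\rho_t$, construct a $C^1$-family of equivariant (pseudo-)developing maps $D_t : \widetilde{M} \to \mathbb{H}^n$ (or natural maps in the Besson–Courtois–Gallot sense), so that $\vol(\rho_t) = \int_M D_t^* \omega_{\mathbb H}$. The key structural fact to exploit is the Schläfli-type / variational formula: differentiating the pullback of a closed form along a smooth family of maps produces, up to an exact term, a boundary contribution. Concretely, if $D_t$ varies with variation vector field $\dot D$, then $\frac{d}{dt} D_t^* \omega_{\mathbb H} = d(\iota_{\dot D}\, D_t^*\omega_{\mathbb H}) + D_t^*(\iota_{\dot D} d\omega_{\mathbb H})$, and since $\omega_{\mathbb H}$ is closed the second term dies, leaving an exact form. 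By Stokes, $\frac{d}{dt}\vol(\rho_t) = \int_{\partial} \iota_{\dot D}\, D_t^*\omega_{\mathbb H}$, a purely boundary term.

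So the strategy reduces to controlling this boundary term, and this is where the hypothesis $n \geq 4$ must enter. For a nonuniform lattice, $M$ is noncompact, and the boundary arises from truncating the cusps: one works on a compact core $M_\epsilon$ obtained by removing horoball neighborhoods, and the boundary $\partial M_\epsilon$ consists of the cross-sections of the cusps, which are (quotients of) flat $(n-1)$-manifolds. The integral $\int_{\partial M_\epsilon}\iota_{\dot D} D_t^*\omega_{\mathbb H}$ must be shown to vanish in the limit as the truncation is pushed out to the cusps. The dimension hypothesis is essential precisely here: the volume form restricted near a cusp, together with the decay of the equivariant map toward the parabolic fixed point, should force the boundary integral to decay, and in dimension $n \geq 4$ the cusp cross-sections carry no cohomological obstruction (whereas in dimensions $2$ and $3$ the cusp tori contribute, which is exactly why those cases are exceptional in the theorem's statement). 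I would establish that, after an appropriate normalization sending the parabolic points to their fixed points on $\partial\mathbb H^n$, the pulled-back form and the variation field both decay fast enough on the horospherical cross-sections.

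The main obstacle I expect is twofold. First, producing a genuinely $C^1$ family of equivariant pseudo-developing maps from a merely $C^1$ path of representations, with enough regularity and uniform control near the cusps to justify differentiation under the integral sign and the application of Stokes' theorem on the noncompact manifold; the parabolic subgroups $\rho_t(\Gamma_{\text{cusp}})$ may themselves deform, so one must track how the peripheral structure varies and ensure the developing maps respect the cusp geometry uniformly in $t$. Second, and more delicate, is the rigorous vanishing of the boundary term: one must verify that the contributions over the horospherical boundaries genuinely tend to zero, using the asymptotic flatness of the volume form along horoballs and the specific dimension count. The heart of the argument is thus an exchange-of-limits estimate on the cusps rather than any deep global topology, and getting the decay rates and the equivariance to cooperate on the noncompact ends is where the real work lies.
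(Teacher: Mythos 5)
Your overall strategy---differentiate $\vol(\rho_t)$ along the path and show the derivative vanishes---is the same variational idea as the paper's, but the mechanism you propose for computing the derivative has a genuine gap. You write $\frac{d}{dt}D_t^*\omega_{\mathbb H} = d\bigl(\iota_{\dot D}\,D_t^*\omega_{\mathbb H}\bigr)$ and then invoke Stokes to conclude that the derivative of the volume is ``a purely boundary term'' over the cusp cross-sections. This fails because the primitive $\iota_{\dot D}\,D_t^*\omega_{\mathbb H}$ does not descend to the quotient: differentiating the equivariance $D_t(\gamma x)=\rho_t(\gamma)D_t(x)$ in $t$ gives $\dot D_t(\gamma x)=\dot\rho_t(\gamma)\rho_t(\gamma)^{-1}\cdot D_t(\gamma x)+\rho_t(\gamma)_*\dot D_t(x)$, so the variation field is only equivariant up to a Killing-field term coming from $\dot\rho_t$. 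Consequently, on a fundamental domain (or truncated core) Stokes produces, in addition to the horospherical cross-section integrals, face-pairing contributions encoding $\dot\rho_t$ that do not cancel. Organizing exactly these contributions into dihedral-angle variations is what the Schl\"afli formula does, and it is how the paper proceeds: it builds an equivariant map adapted to a $\Gamma$-equivariant triangulation of the compactification $\wM$, writes $\vol(\rho_t)$ as a finite sum of volumes of (possibly ideal) geodesic simplices, applies Schl\"afli to each family, and kills the total derivative by a transverse-degree cancellation around codimension-$2$ faces.

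Two further points where your plan does not reach the actual difficulty. First, the central obstacle in the noncompact case---which the paper isolates and solves---is that along the path the image $\rho_t(\Gamma_{\xi_i})$ of a peripheral subgroup may switch between fixing a point of $\H$ (compact type) and fixing only a point of $\partial\H$ (parabolic type), so no continuous choice of asymptotic behavior for $D_t$ at the cusp exists across such transitions. You flag this (``track how the peripheral structure varies'') but give no mechanism; the paper resolves it by proving the set of parabolic-type times is closed, working separately on boundary-compact and boundary-parabolic subpaths (where smooth fixed-point paths $\eta_i(t)$ in $\H$ or $\partial\H$ exist), and then using that a $C^1$ function whose derivative vanishes on a dense open subset of $[0,1]$ is constant. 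Second, your explanation of where $n\geq 4$ enters (decay of the boundary integrals, ``no cohomological obstruction'' from cusp cross-sections) is not the real mechanism: in the paper the hypothesis enters because the Schl\"afli formula stays valid for ideal polyhedra in dimension $\geq 4$ (codimension-$2$ faces, even ideal ones, have uniformly bounded volume), while in dimension $3$ ideal edges have infinite length and the formula requires horospherical truncation---which is precisely why only parabolicity-preserving deformations can be handled there, and why volume genuinely varies on $\mathrm{Hom}(\Gamma,\mathrm{SO}(3,1))$. Any dimension-insensitive decay estimate of the kind you sketch would have to fail in dimension $3$, and your proposal does not identify what would break.
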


For $n\geq 4$, it is well known that a nonuniform lattice $\Gamma$ is locally rigid in $\SO$. Note that this local rigidity theorem does not follow from Mostow's rigidity theorem. There are two methods to prove the local rigidity theorem. The first one is to compute the first group cohomology $H^1(\Gamma, Ad \circ i)$. If $H^1(\Gamma, Ad \circ i)$ vanishes, then a lattice embedding $i:\Gamma \hookrightarrow \SO$ is locally rigid.
This method was used by Calabi \cite{Cal} and generalized by Weil \cite{Wei60,Wei62}. The second one is to follow Mostow's proof. This was realized by Kapovich \cite{Ka}.
Here we present a third method. The volume rigidity theorem and Theorem \ref{thm:1.1} give a new proof for the local rigidity theorem of nonuniform hyperbolic lattices in dimension greater than or equal to $4$. Actually we obtain the global structure of $\mathrm{Hom}(\Gamma,\SO)$ around a lattice embedding $i :\Gamma \rightarrow \SO$ as follows.

\begin{corollary}\label{cor:1.2}
Let $n\geq 4$ and $\Gamma$ be a nonuniform lattice in $\SO$. Then the connected component of $\mathrm{Hom}(\Gamma,\SO)$ containing a lattice embedding $i :\Gamma \hookrightarrow \SO$ consists of representations conjugate to $i$ by isometries.
\end{corollary}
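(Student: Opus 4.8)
The corollary follows by combining the two main rigidity inputs that are now available: the volume rigidity theorem (maximality forces conjugacy to the lattice embedding) and Theorem \ref{thm:1.1} (constancy of the volume along $C^1$-paths). Let me sketch how to assemble them.

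=== PROOF PROPOSAL ===

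The plan is to deduce the corollary directly from Theorem \ref{thm:1.1} together with the volume rigidity theorem quoted in the introduction. Let $C$ denote the connected component of $\hom(\Gamma,\SO)$ containing the lattice embedding $i$. The goal is to show that every $\rho \in C$ is conjugate to $i$ by an isometry. Since volume rigidity asserts that $\vol(\rho)=\vol(M)$ holds if and only if $\rho$ is conjugate to $i$ by an isometry (for $n \geq 3$), it suffices to prove that $\vol(\rho)=\vol(M)=\vol(i)$ for every $\rho$ in the component $C$.

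First I would reduce the problem to showing that $\vol$ is constant on all of $C$. Because $i$ itself is the lattice embedding, the Milnor--Wood equality gives $\vol(i)=\vol(M)$, so if $\vol(\rho)=\vol(i)$ for all $\rho\in C$ then maximality holds throughout $C$ and the rigidity statement applies verbatim. The remaining issue is therefore the constancy of $\vol$ on the full connected component, whereas Theorem \ref{thm:1.1} only gives constancy along $C^1$-smooth paths. The key step is thus to upgrade the path-wise statement to a component-wise one. The representation variety $\hom(\Gamma,\SO)$ is a real algebraic set, since $\Gamma$ is finitely generated and the defining relations of $\Gamma$ together with the equations cutting out $\SO\subset \mathrm{GL}_{n+1}(\mathbb{R})$ are polynomial. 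A real algebraic set has finitely many connected components, each of which is path-connected by $C^1$-smooth (indeed semialgebraic, hence piecewise smooth) paths; this is the standard fact that the connected components of a real algebraic variety coincide with its path components and that any two points in the same component can be joined by a piecewise-real-analytic arc. Concretely, for any $\rho\in C$ I would choose a $C^1$-smooth path $\rho_t$ in $C$ with $\rho_0=i$ and $\rho_1=\rho$, and then invoke Theorem \ref{thm:1.1} to conclude $\vol(\rho)=\vol(\rho_1)=\vol(\rho_0)=\vol(i)$.

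Having established $\vol(\rho)=\vol(i)=\vol(M)$ for every $\rho\in C$, I would then apply the volume rigidity theorem: since $n\geq 4\geq 3$, the equality $\vol(\rho)=\vol(M)$ forces $\rho$ to be conjugate to $i$ by an isometry of $\H$, that is, by an element of $\SO$. This holds for each $\rho\in C$, which is exactly the assertion that the component $C$ consists of representations conjugate to $i$. This completes the argument.

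The main obstacle I anticipate is the passage from $C^1$-paths to connected components, i.e.\ verifying that any representation in the component of $i$ can actually be reached from $i$ by a $C^1$-smooth (not merely continuous) path lying inside $\hom(\Gamma,\SO)$. This rests on the semialgebraicity of the representation variety and the curve-selection-type result guaranteeing that points in the same semialgebraic connected component are joined by piecewise-smooth semialgebraic arcs; one must be slightly careful that such an arc can be taken to be $C^1$ (subdividing at the finitely many non-smooth parameter values and applying Theorem \ref{thm:1.1} on each smooth piece suffices). Once this structural point about real algebraic sets is in hand, the rest is a formal consequence of Theorem \ref{thm:1.1} and volume rigidity.
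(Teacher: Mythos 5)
Your proposal is correct and takes essentially the same route as the paper: constancy of the volume on the connected component of $i$ (deduced from Theorem \ref{thm:1.1}) combined with the volume rigidity theorem of Bucher--Burger--Iozzi (Theorem \ref{BBI}) forces every representation in that component to be conjugate to $i$ by an isometry. The only difference is that you explicitly justify the upgrade from $C^1$-path constancy to component-wise constancy via the semialgebraic structure of $\hom(\Gamma,\SO)$, a point the paper treats as immediate in its Corollary \ref{compconstant}; this is a legitimate and welcome filling-in of a detail rather than a different argument.
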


As mentioned before, it is well known that for a nonuniform lattice $\Gamma$ in $\mathrm{SO}(3,1)$, the volume of a representation is not constant on connected components of $\mathrm{Hom}(\Gamma,\mathrm{SO}(3,1))$ and the local rigidity theorem also fails. However there is no local deformation preserving parabolicity for a lattice embedding $i :\Gamma \rightarrow \mathrm{SO}(3,1)$. For this reason we explore the set of values for the volume of a representation on the subvariety $\mathrm{Hom}_{par}(\Gamma, \mathrm{SO}(3,1))$ consisting of representations of $\Gamma$ in $\mathrm{SO}(3,1)$ preserving parabolicity.

\begin{theorem}\label{thm:1.3}
Let $\Gamma$ be a nonuniform lattice in $\mathrm{SO}(3,1)$ and $\rho_t : \Gamma \rightarrow \mathrm{SO}(3,1)$ be a $C^1$-smooth path on $\mathrm{Hom}_{par}(\Gamma, \mathrm{SO}(3,1))$. Then $\vol(\rho_t)$ is constant.
\end{theorem}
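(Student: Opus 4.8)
The plan is to show directly that $\frac{d}{dt}\vol(\rho_t)=0$ along the path, following the same scheme as in the proof of Theorem \ref{thm:1.1}: realize the volume as the integral of a pulled-back volume form, observe that its $t$-derivative is exact, and apply Stokes' theorem on the truncated manifold so that the entire variation is concentrated in boundary terms supported on the cusp cross-sections. The preservation of parabolicity will be exactly what makes these boundary terms vanish.

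First I would fix a $C^1$ family of $\rho_t$-equivariant pseudo-developing maps $D_t\co\widetilde M\to\mathbb{H}^3$, where $M=\Gamma\backslash\mathbb{H}^3$, chosen to have a standard product form on the cusps. Since $\rho_t\in\hom_{par}(\Gamma,\PSL)$, each peripheral subgroup $P\cong\mathbb{Z}^2$ maps under $\rho_t$ to a group of parabolics; commuting parabolics must share a fixed point, so $\rho_t(P)$ fixes a common $\xi_t\in\partial\mathbb{H}^3$, and $\xi_t$ together with its translation data depends $C^1$ on $t$. Conjugating by a $C^1$ family of isometries, I would arrange $\xi_t=\infty$ in the upper half-space model, so that $\rho_t(P)$ acts by Euclidean translations of the horizontal planes. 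On a horoball neighborhood $\{z\ge z_0\}$ of the cusp I would then take $D_t(x,y,z)=(\phi_t(x,y),z)$, where $\phi_t$ is the affine map intertwining the $P$-action with $\rho_t(P)$; this is $\rho_t$-equivariant and $C^1$ in $t$, and one extends it over the compact core arbitrarily. By the well-definedness of the volume (Francaviglia; see Section \ref{sec:volume}), $\vol(\rho_t)=\int_M D_t^*\omega$ for this choice, where $\omega$ is the hyperbolic volume form.

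Next I would differentiate. Writing $V_t$ for the variation field $\partial_t D_t$ (a section of $D_t^*T\mathbb{H}^3$), the closedness of $\omega$ gives, via Cartan's formula, $\frac{d}{dt}D_t^*\omega=d\beta_t$ with $\beta_t=D_t^*(\iota_{V_t}\omega)$ an exact $2$-form on $M$. Letting $M_s$ denote $M$ with the cusps truncated at depth $s$, Stokes' theorem yields $\frac{d}{dt}\vol(\rho_t)=\int_M d\beta_t=\lim_{s\to\infty}\int_{\partial M_s}\beta_t$, the product form near the cusps supplying the integrability needed to differentiate under the integral sign and to pass to the limit. To evaluate the boundary terms, note that the product form makes $V_t=(\partial_t\phi_t,0)$ horizontal, hence tangent to every horosphere $\{z=\mathrm{const}\}$; for tangent vectors $u,v$ to such a horosphere the three vectors $V_t,u,v$ all lie in the two-dimensional horizontal space, so $(\iota_{V_t}\omega)(u,v)=\omega(V_t,u,v)=0$. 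Since $D_t$ maps $\partial M_s$ into a horosphere centered at $\xi_t$, the pullback $\beta_t$ restricts to zero on $\partial M_s$; thus every boundary integral vanishes and $\frac{d}{dt}\vol(\rho_t)=0$, giving constancy.

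The main obstacle is precisely this cusp analysis: it is the hypothesis $\rho_t\in\hom_{par}(\Gamma,\PSL)$ that allows the product normalization of $D_t$ and forces $V_t$ to be horizontal, and this is what kills the horospherical boundary contribution. Without it a peripheral $\mathbb{Z}^2$ may acquire loxodromic elements—the mechanism underlying hyperbolic Dehn surgery, where the volume genuinely varies—and the boundary term no longer vanishes. The remaining technical points are to verify the $C^1$-regularity of the cusp data $\xi_t$ and $\phi_t$ along the path and the convergence of the truncated integrals, both of which are controlled by the product form on the horoball neighborhoods.
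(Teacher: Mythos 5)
Your route is genuinely different from the paper's (the paper triangulates $\widehat{\widetilde M}$, straightens a $\rho_t$-equivariant map $f_t$ simplex by simplex, and applies the $3$-dimensional Schl\"afli formula for truncated ideal simplices), and for a one-cusped $M$ your argument is essentially correct. As written, however, it has a gap when $M$ has several cusps. Your key pointwise claim --- that the variation field $V_t$ is horizontal, so that $\beta_t$ restricts to zero on every boundary torus --- relies on having normalized the relevant cusp fixed point to be the \emph{constant} point $\infty$. A single $C^1$ family of conjugations cannot freeze all the fixed points $\xi^{(1)}_t,\dots,\xi^{(r)}_t$ at once: an isometry of $\mathbb{H}^3$ is determined by the images of three boundary points, so at most three of them can be normalized, and for $r\ge 4$ the cross-ratios of the fixed points are conjugation invariants of $\rho_t$ that may genuinely vary in $t$ (collisions of fixed points along the path cause trouble even for small $r$). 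On any cusp whose fixed point still moves, the correct variation field is $V_t = X_t\circ D_t + W_t$, where $W_t$ is the horizontal part and $X_t$ is the Killing field generated by the $C^1$ family of isometries $g_t$ carrying $\infty$ to $\xi^{(i)}_t$; the term $X_t\circ D_t$ is in general transverse to the horospheres centered at $\xi^{(i)}_t$, so $\beta_t$ does \emph{not} vanish pointwise on that component of $\partial M_s$, and your boundary computation breaks down there.

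The gap is repairable, but by a global rather than pointwise argument: on such a cusp the offending boundary contribution is $\int_{T_i(s)} D_t^*(\iota_{X_t}\omega)$, where $T_i(s)$ is the corresponding boundary torus, and since $X_t$ is Killing one has $d(\iota_{X_t}\omega)=\mathcal{L}_{X_t}\omega=0$; because $H^2_{dR}(\mathbb{H}^3)=0$ the closed form $\iota_{X_t}\omega$ is exact, so its pullback integrates to zero over the closed surface $T_i(s)$. Adding this observation (together with a word on why the equivariant extension of $D_t$ over the compact core can be chosen $C^1$ in $t$, and why the intertwining map $\phi_t$ still exists, possibly as a degenerate affine map, when $\rho_t$ kills part of a peripheral $\mathbb{Z}^2$) closes your proof in full generality. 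It is worth noting the parallel with the paper's proof of this theorem: there, parabolicity is used to choose $\rho_t(\pi_1(M))$-invariant horospheres with which to truncate the ideal simplices, so that the infinite-length edges in the Schl\"afli formula cancel by a transverse degree count, the finite edges cancelling as in the higher-dimensional case. In your argument parabolicity plays exactly the same role of annihilating the cusp contributions, expressed in the smooth Stokes-theorem language rather than the piecewise-straight Schl\"afli language.
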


Since $\mathrm{Hom}_{par}(\Gamma, \mathrm{SO}(3,1))$ is an algebraic variety, there are finitely many connected components. Hence the volume of a representation takes a finite number of values on $\mathrm{Hom}_{par}(\Gamma, \mathrm{SO}(3,1))$. The volume rigidity theorem in dimension $3$ and Theorem \ref{thm:1.3} immediately give the following corollary.

\begin{corollary}\label{cor:1.4}
Let $\Gamma$ be a nonuniform lattice in $\mathrm{SO}(3,1)$. Then the connected component of $\mathrm{Hom}_{par}(\Gamma,\mathrm{SO}(3,1))$ containing a lattice embedding $i :\Gamma \hookrightarrow \mathrm{SO}(3,1)$ consists of representations conjugate to $i$ by isometries.
\end{corollary}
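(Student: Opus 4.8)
The plan is to combine the constancy of the volume along smooth paths (Theorem~\ref{thm:1.3}) with the equality case of the Milnor--Wood inequality recalled in the introduction, namely the volume rigidity theorem in dimension $3$. Write $C$ for the connected component of $\mathrm{Hom}_{par}(\Gamma,\mathrm{SO}(3,1))$ containing the lattice embedding $i$. Note first that $i$ indeed lies in $\mathrm{Hom}_{par}$, since a lattice embedding sends parabolic elements to parabolic isometries, and that $i$ realizes the maximal value $\vol(i)=\vol(M)$, where $M=\Gamma\backslash\mathbb{H}^3$.

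The first step is to upgrade Theorem~\ref{thm:1.3}, which concerns a single $C^1$-smooth path, to the statement that $\vol$ is constant on the entire component $C$. For this I would use that $\mathrm{Hom}_{par}(\Gamma,\mathrm{SO}(3,1))$ is a real algebraic set: by the triangulability of real algebraic (semialgebraic) sets, their connected components coincide with their path components, and any two points lying in the same component can be joined by a piecewise $C^1$-smooth path staying inside the variety. Applying Theorem~\ref{thm:1.3} to each smooth segment of such a path and matching values at the finitely many break points shows that $\vol$ agrees at the two endpoints. Hence $\vol\equiv\vol(i)=\vol(M)$ on all of $C$.

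With this in hand, the conclusion follows quickly. For an arbitrary $\rho\in C$, the volume rigidity theorem in dimension $n=3$ asserts $\vol(\rho)\leq\vol(M)$ with equality precisely when $\rho$ is conjugate to $i$ by an isometry. Since $\vol(\rho)=\vol(M)$ by the previous step, $\rho$ must be conjugate to $i$ by an isometry. As $\rho\in C$ was arbitrary, every representation in $C$ is conjugate to $i$, which is exactly the assertion of the corollary.

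The main obstacle I anticipate is the reduction from \emph{constant along a single $C^1$-smooth path} to \emph{constant on a connected component}: one must know that distinct points of a connected component of the real algebraic variety $\mathrm{Hom}_{par}$ can always be joined by a path that is piecewise $C^1$-smooth and remains inside $\mathrm{Hom}_{par}$, with only finitely many break points at which smoothness may fail, so that Theorem~\ref{thm:1.3} can be applied segment by segment. Once the standard structure theory of real algebraic sets supplies such paths, the remainder is a direct application of volume rigidity. Alternatively, one could argue that $\vol$ is continuous on $\mathrm{Hom}_{par}$ and, by Theorem~\ref{thm:1.3}, locally constant on the nonsingular locus, and then propagate this to the whole component; the piecewise-smooth-path formulation seems the cleaner route.
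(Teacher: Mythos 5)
Your proposal is correct and follows essentially the same route as the paper: the paper deduces from Theorem~\ref{thm:1.3} that $\vol$ is constant on each connected component of $\mathrm{Hom}_{par}(\Gamma,\mathrm{SO}(3,1))$ (Corollary~\ref{compconstant3}), so that $\vol(\rho)=\vol(i)=\vol(M)$ on the component of $i$, and then applies the volume rigidity theorem of Bucher--Burger--Iozzi (Theorem~\ref{BBI}) to conclude that every such $\rho$ is conjugate to $i$ by an isometry. Your additional care about joining points of a component by piecewise $C^1$-smooth paths inside the algebraic set is a justification of the step the paper treats as immediate, not a different argument.
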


Let $M$ and $N$ be connected, orientable manifolds of the same dimension. Then $M$ is said to be \emph{dominated} by $N$ if there exists a proper map $f:N \rightarrow M$ with nonzero degree.
For any compact, connected, orientable $3$-manifold $N$, Soma \cite{So} proved that there are only a finite number of hyperbolic manifolds dominated by $N$. As an application of Theorem \ref{thm:1.3}, we obtain an analogous theorem to Soma's for noncompact $3$-manifolds.

\begin{theorem}\label{thm:1.5}
Let $N$ be a connected, orientable, cusped $3$-manifold. Then there are only a finite number of hyperbolic $3$-manifolds dominated by $N$.
\end{theorem}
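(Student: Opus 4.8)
The plan is to combine the Gromov--Thurston proportionality principle with Theorem \ref{thm:1.3}. Since $N$ is a cusped (noncompact, finite-volume) hyperbolic $3$-manifold, $\Gamma := \pi_1(N)$ is a nonuniform lattice in $\mathrm{SO}(3,1)$, so Theorem \ref{thm:1.3} applies to $\Gamma$. First I would record the standard volume estimate: if $f \co N \to M$ is a proper map of nonzero degree $d$ onto a finite-volume hyperbolic $3$-manifold $M$, then the simplicial volumes satisfy $|d|\cdot \|M\| \le \|N\|$, and since $\|M\| = \vol(M)/v_3$ with $v_3$ the volume of the regular ideal tetrahedron \cite{Th78}, we get
\[
\vol(M) \le \tfrac{1}{|d|}\vol(N) \le \vol(N).
\]
In particular the volume inequality already forces any hyperbolic manifold dominated by $N$ to have finite volume, and the degree is bounded by $|d| \le \vol(N)/v_{\min}$, where $v_{\min}>0$ is the least volume of a cusped hyperbolic $3$-manifold. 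Note also that a proper map from the noncompact manifold $N$ cannot have compact target (the preimage of $M$ would have to be compact), so every $M$ dominated by $N$ is noncompact, hence cusped.

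Next I would turn each domination into a representation. Given a proper degree-$d$ map $f\co N \to M$ with $M = \Gamma_M\backslash \mathbb{H}^3$, let $\mathrm{hol}_M \co \pi_1(M) \to \mathrm{SO}(3,1)$ be the holonomy and set $\rho_f := \mathrm{hol}_M \circ f_* \co \Gamma \to \mathrm{SO}(3,1)$. Lifting $f$ to $\tilde f \co \tilde N \to \mathbb{H}^3$ and post-composing with the developing map of $M$ produces a $\rho_f$-equivariant pseudo-developing map, whence, by the properties of the volume recalled in Section \ref{sec:volume}, $\vol(\rho_f) = d\cdot \vol(M)$. The key point is that $\rho_f$ preserves parabolicity: since $f$ is proper it carries each cusp of $N$ into a cusp neighborhood of $M$, so each peripheral $\mathbb{Z}^2 \le \Gamma$ is sent by $f_*$ into a peripheral (parabolic) subgroup of $\pi_1(M)$, and hence by $\rho_f$ into a parabolic subgroup of $\mathrm{SO}(3,1)$. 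Therefore $\rho_f \in \mathrm{Hom}_{par}(\Gamma,\mathrm{SO}(3,1))$.

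Now suppose for contradiction that $N$ dominates infinitely many pairwise non-isometric hyperbolic $3$-manifolds $M_1, M_2, \dots$ via proper maps $f_i$ of degrees $d_i$. By the first paragraph the $d_i$ are bounded, so after passing to a subsequence I may assume $d_i = d$ is constant. Since there are only finitely many complete hyperbolic $3$-manifolds of any fixed volume \cite{Th78}, the infinitely many non-isometric $M_i$ must realize infinitely many distinct values of $\vol(M_i)$ (otherwise some single volume would be shared by infinitely many of them). Consequently $\vol(\rho_{f_i}) = d\cdot\vol(M_i)$ takes infinitely many distinct values. But by the second paragraph all $\rho_{f_i}$ lie in the algebraic set $\mathrm{Hom}_{par}(\Gamma,\mathrm{SO}(3,1))$, which has finitely many connected components, and by Theorem \ref{thm:1.3} the volume is constant on each of them; hence $\vol$ takes only finitely many values on $\mathrm{Hom}_{par}(\Gamma,\mathrm{SO}(3,1))$. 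This contradiction proves the theorem.

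The main obstacle I anticipate is the precise handling of the cusped geometry in the two analytic inputs of the second paragraph: verifying that the pseudo-developing map built from a proper map computes $\vol(\rho_f) = d\,\vol(M)$ (the integral near the cusps must be shown to converge and to contribute exactly the degree), and verifying that $f$ may be taken, up to proper homotopy, to send peripheral tori into cusp neighborhoods so that $\rho_f$ genuinely lands in $\mathrm{Hom}_{par}$. Both should follow from the definition and basic properties of the volume of a representation together with the structure of ends of finite-volume hyperbolic $3$-manifolds, but they are exactly where the argument uses that $N$ and $M$ are cusped rather than closed.
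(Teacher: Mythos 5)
Your argument has a genuine gap at its very first line: you assume $N$ is hyperbolic, but Theorem \ref{thm:1.5} makes no such hypothesis. In the paper's terminology (see the definition at the start of Section \ref{sec:3dimension}), a \emph{cusped} $3$-manifold is merely the interior of a compact orientable manifold $\overline N$ whose boundary is a union of tori; it need not carry any hyperbolic structure, just as Soma's original theorem assumes only that $N$ is compact, connected and orientable. Consequently $\pi_1(N)$ need not be a nonuniform lattice in $\mathrm{SO}(3,1)$, so Theorem \ref{thm:1.3} cannot be invoked for it, and the quantities you rely on to bound the degree and the target volume, namely $\vol(N)$ and the proportionality $\|N\|=\vol(N)/v_3$, are unavailable. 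What you have written proves the theorem only in the special case where $N$ is itself a finite-volume hyperbolic $3$-manifold; for, say, a cusped $N$ with nontrivial JSJ decomposition (which can still dominate hyperbolic manifolds with nonzero degree), your proof does not start.

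The repair is exactly what the paper does, and it leaves the rest of your argument intact. First, finiteness of $\|N\|$ (in locally finite homology) follows not from proportionality but from L\"{o}h's finiteness criterion \cite{Loh}, using that the boundary tori have amenable fundamental groups; combined with $|\mathrm{deg}f|\cdot\|M\|\le\|N\|$, the proportionality principle for the hyperbolic \emph{target} $M$, and the universal lower bound on volumes of hyperbolic $3$-manifolds, this bounds $|\mathrm{deg}f|$. Second, the constancy-of-volume input must be taken from Corollary \ref{compconstant3} rather than from Theorem \ref{thm:1.3} as stated for lattices: Section \ref{sec:3dimension} develops the volume of representations and Theorem \ref{pathconstant3} for fundamental groups of arbitrary cusped $3$-manifolds (every $3$-manifold is triangulable, and the peripheral subgroups, being images of $\mathbb{Z}^2$, are abelian and hence amenable, so Lemma \ref{lem:equimap} and the Schl\"{a}fli argument go through), and it is this version that one applies to $\pi_1(N)$. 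With those two substitutions, your remaining steps --- a proper nonzero-degree map yields $\rho_f\in\hom_{par}(\pi_1(N),\PSL)$ with $\vol(\rho_f)=(\mathrm{deg}f)\cdot\vol(M)$, the set $\hom_{par}$ has finitely many components on each of which the volume is constant, and the volume is a finite-to-one function on hyperbolic $3$-manifolds --- are precisely the paper's proof, merely organized as a contradiction via a constant-degree subsequence instead of a direct count of the possible values of $\vol(M)$.
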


Our approach in the paper is basically based on the proof of Besson--Courtois--Gallot \cite{BCG} in the uniform lattice case.
A technical difficulty in dealing with the nonuniform lattice case is to control the behavior of the parabolic subgroups of a nonuniform lattice $\Gamma$ along a given smooth path on $\mathrm{Hom}(\Gamma,\SO)$. To overcome this issue, we define a boundary-compact path and boundary-parabolic path depending on the behavior of a parabolic subgroup of $\Gamma$ along the path. Then we show that the volume of a representation is constant on either a boundary-compact or boundary-parabolic path and generalize this to arbitrary smooth paths on $\mathrm{Hom}(\Gamma,\SO)$ in dimension greater than or equal to $4$. See section \ref{sec:general} for definitions.

\subsection*{Structure of the paper} We recall some basics of the disjoint cone of a compact manifold with boundary, continuous group cohomology, and the Schl\"{a}fli formula in Section \ref{sec:pre}. We reformulate the volume of a representation in the nonuniform lattice case in order to compute the volume of a representation with the triangulation given in Section \ref{sec:triangulation} and the equivariant map constructed in Section \ref{sec:equimap}. This reformulation makes it possible to apply the Schl\"{a}fli formula to our situation. In Section \ref{sec:onecusp}, we prove Theorem \ref{thm:1.1} in the
one cusped hyperbolic manifold case and then extend it to the general case in Section \ref{sec:general}.
Theorem \ref{thm:1.3} and \ref{thm:1.5} are covered in Section \ref{sec:3dimension}. In addition, we deal with the dimension $2$ case in Section \ref{sec:2dimension} in our setting.

\section{Preliminaries}\label{sec:pre}

In this section, we collect some definitions and facts which are used throughout the paper.

\subsection{The disjoint cone of a compact manifold with boundary}
Let $\oM$ be a compact, connected, smooth, oriented $n$-manifold with boundary $\partial \oM$ and let $M$ denote the interior of $\oM$.
Let $\partial_1 \oM,\ldots,\partial_r \oM$ be the connected components of $\partial \oM$.
Then one defines the disjoint cone $$\wM=\mathrm{Dcone}(\cup_{i=1}^r \partial_i \oM \rightarrow \oM)$$ by coning each $\partial_i \oM$ to a point. Note that $\widehat{M}$ can be obtained by collapsing each boundary component of $\oM$ to a point. Then it can be easily seen that there is an isomorphism $$H_*(\oM,\partial \oM) \cong H_*(\wM)$$ in degree $*\geq 2$. In particular $H_n(\wM)\cong \mathbb R$ if $n\geq 2$. Denote a fundamental class of $\wM$ by $[\wM]$. Each boundary of $\oM$ defines a subgroup of $\pi_1(M)$ which is well-defined up to conjugacy. These are called the \emph{peripheral subgroups} of $\oM$.

Let $p : \widetilde{\oM} \rightarrow \oM$ be the universal covering map of $\oM$. Let $\widehat{\widetilde M}$ denote the quotient space of $\widetilde{\oM}$ obtained by collapsing each component of $p^{-1}(\partial \oM)$ to a point. The covering map $p : \widetilde{\oM} \rightarrow \oM$ extends to a map $\hat{p}: \widehat{\widetilde M} \rightarrow \widehat{M}$ and moreover, the action of $\pi_1(M)$ on $\widetilde{\oM}$ by covering transformations induces an action on $\widehat{\widetilde M}$. However, the action on $\widehat{\widetilde M}$ is not free because each point of $\partial  \widehat{\widetilde M}$ is stabilized by some peripheral subgroup of $\pi_1(M)$. Clearly $\pi_1(M) \backslash \widehat{\widetilde M}=\widehat{M}$.

The set of points of $\widehat{\widetilde M}$ coming from the collapsed components of $p^{-1}(\partial \oM)$ is denoted by $\partial \widehat{\widetilde M}$. A point of $\partial \widehat{\widetilde M}$ is called an \emph{ideal point} of $\widehat{\widetilde M}$ and its image in $\wM$ by $\hat p$ is also called an \emph{ideal point} of $\wM$.

\subsection{Continuous group cohomology}

Let $G$ be a topological group. Consider the continuous cocomplex $C^*_c(G,\mathbb{R})$ with homogeneous coboundary operator, where $$C^k_c(G,\mathbb{R})=\{ f \co G^{k+1} \rightarrow \mathbb{R}\text{ }|\text{ }f \text{ is continuous} \}.$$
The action of $G$ on $C^k_c(G,\mathbb{R})$ is given by
$$(g\cdot f)(g_0,\ldots,g_k)=f(g^{-1}g_0,\ldots,g^{-1}g_k).$$
The continuous cohomology $H^*_c(G,\mathbb{R})$ of $G$ with trivial coefficients is defined as the cohomology of the $G$-invariant continuous cocomplex $C^*_c(G,\mathbb{R})^G$.

For a cochain $f\co G^{k+1} \rightarrow \mathbb{R}$, define its sup norm by
$$\|f\|_\infty = \sup \{ |f(g_0,\ldots,g_k)|\text{ }|\text{ } (g_0,\ldots,g_k)\in G^{k+1}\}.$$
The sup norm turns $C^*_c(G,\mathbb{R})$ into a complex of normed real vector spaces. The continuous bounded cohomology $H^*_{c,b}(G,\mathbb{R})$ of $G$ is defined as the cohomology of the subcocomplex $C^*_{c,b}(G,\mathbb{R})^G$ of
$G$-invariant continuous bounded cochains in $C^*_c(G,\mathbb{R})$.
The inclusion of $C^*_{c,b}(G,\mathbb{R})^G \subset C^*_c(G,\mathbb{R})^G$ induces a comparison map $c \co H^*_{c,b}(G,\mathbb{R}) \rightarrow H^*_c(G,\mathbb{R})$. The sup norm induces seminorms on both $H^*_c(G,\mathbb{R})$ and $H^*_{c,b}(G,\mathbb{R})$, denoted by $\| \cdot \|_\infty$.

Now let $G$ be a semisimple Lie group and $X$ the associated symmetric space. We here describe other useful cocomplexes for both the continuous and the continuous bounded cohomology of $G$. For a nonnegative integer $k$, define
$$C^k_c(X,\mathbb{R}) = \{ f \co X^{k+1} \rightarrow \mathbb{R} \ | \ f \text{ is continuous} \}.$$
The $G$-action on $C^*_c(X,\mathbb{R})$ is defined analogously to the one on $C^*_c(G,\mathbb{R})$.
Consider the sup norm $\| \cdot \|_\infty$ on $C^k_c(X,\mathbb{R})$ defined by
$$\| f \|_\infty = \sup \{ |f(x_0,\ldots,x_k)| \ | \ (x_0,\ldots,x_k)\in X^{k+1} \}.$$
Let $C^k_{c,b}(X,\mathbb{R})$ be the subspace consisting of continuous bounded $k$-cochains.
Then $C^*_c(X,\mathbb{R})^G$ with the homogeneous coboundary operator becomes a cochain complex. Moreover, the homogeneous coboundary operator on $C^*_c(X,\mathbb{R})^G$ restricts to $C^*_{c,b}(X,\mathbb{R})^G$.

The continuous cohomology $H^*_c(G,\mathbb{R})$ of $G$ is isometrically isomorphic to the cohomology of the cocomplex $C^*_c(X,\mathbb{R})^G$ \cite[Chapter 3]{Gu80}. Similarly the continuous bounded cohomology $H^*_{c,b}(G,\mathbb{R})$ of $G$  is isometrically isomorphic to the cohomology of the subcocomplex $C^*_{c,b}(X,\mathbb{R})^G$ of $C^*_c(X,\mathbb{R})^G$.
Note that the comparison map $c \co H^*_{c,b}(G,\mathbb{R})\rightarrow H^*_c(G,\mathbb{R})$ is realized by the natural inclusion $C^*_{c,b}(X,\mathbb{R})^G \subset C^*_c(X,\mathbb{R})^G$.
Furthermore, for any closed subgroup $\Gamma$ of $G$, two cohomologies $H^*(\Gamma,\mathbb{R})$ and $H^*_b(\Gamma,\mathbb{R})$ are isometrically isomorphic to the cohomologies of the cocomplexes $C^*_c(X,\mathbb{R})^\Gamma$ and  $C^*_{c,b}(X,\mathbb{R})^\Gamma$, respectively. We refer the reader to \cite[Corollary 7.4.10]{Monod} for details.

\subsection{Schl\"{a}fli formula}

The Schl\"{a}fli formula plays a central role in the computation of the volume of hyperbolic polyhedra. This formula relates the variation of the dihedral angles of a smooth family of polyhedra in a space form to the variation of the enclosed volumes.

\begin{theorem}
Let $P_t$ be a smooth one-parameter family of polyhedra in a simply connected $n$-dimensional space of constant curvature $-1$. Then the derivative of the volume of $P_t$ satisfies the equation $$(1-n) d \vol(P_t)= \sum_{F} \vol_{n-2}(F)d\theta_F,$$ where the sum is over all codimension $2$ faces of $P_t$, $\vol_{n-2}$ denotes the $(n-2)$-dimensional volume, and $\theta_F$ denotes the dihedral angle at $F$.
\end{theorem}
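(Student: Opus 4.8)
The plan is to work in the hyperboloid model, representing $\mathbb H^n$ as $\{x\in\mathbb R^{n,1}:\langle x,x\rangle=-1,\ x_0>0\}$ with the Minkowski form $\langle\cdot,\cdot\rangle$. Each codimension-$1$ face (facet) $F_i$ of $P_t$ lies on a hyperplane $H_i=\{x:\langle x,u_i\rangle=0\}$ cut out by an outward spacelike unit normal $u_i=u_i(t)$, $\langle u_i,u_i\rangle=1$, so that $P_t=\{x:\langle x,u_i\rangle\le 0\ \forall i\}$. Since the family is $C^1$ and the combinatorial type of a smooth family of polyhedra is locally constant, I may assume the face lattice is fixed along the path and each $u_i(t)$ varies smoothly; along a codimension-$2$ face $F=F_i\cap F_j$ the interior dihedral angle satisfies $\langle u_i,u_j\rangle=-\cos\theta_F$.

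First I would record the first variation of volume. For a point $x\in H_i$ the signed distance to the moving hyperplane has derivative $\langle x,\dot u_i\rangle$ (differentiate $\mathrm{arcsinh}\langle x,u_i(t)\rangle$ at a point with $\langle x,u_i\rangle=0$), so the outward normal velocity of $F_i$ is $-\langle x,\dot u_i\rangle$. By the transport formula the derivative of volume is the flux of the boundary motion:
\[
\frac{d}{dt}\vol(P_t)=-\sum_i\int_{F_i}\langle x,\dot u_i\rangle\,dA_i .
\]

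The heart of the argument is to convert each facet integral into integrals over codimension-$2$ faces. The key fact is that the restriction of a linear functional $f_w(x)=\langle x,w\rangle$ to a totally geodesic $\mathbb H^{m}\subset\mathbb H^n$ is an eigenfunction: $\mathrm{Hess}\,f_w=f_w\,g$, hence $\Delta f_w=m\,f_w$ and $\nabla f_w$ is the tangential projection of $w$. Because $\langle\dot u_i,u_i\rangle=0$, the vector $\dot u_i$ lies in the Minkowski space $u_i^{\perp}$ carrying $H_i\cong\mathbb H^{n-1}$, so I may apply this with $w=\dot u_i$ and $m=n-1$. The divergence theorem on $F_i\subset H_i$ then gives
\[
(n-1)\int_{F_i}\langle x,\dot u_i\rangle\,dA_i=\int_{\partial F_i}\langle \dot u_i,\nu\rangle\,d\sigma=\sum_j\int_{F_{ij}}\langle\dot u_i,\nu_{ij}\rangle\,d\sigma,
\]
where $\nu_{ij}$ is the outward unit normal of $F_{ij}$ inside $H_i$ (tangent to $H_i$, so the projection of $\dot u_i$ is harmless) and $d\sigma$ is $(n-2)$-dimensional volume.

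Finally I would assemble the pieces. Each codimension-$2$ face $F_{ij}$ occurs once for $F_i$ and once for $F_j$, contributing the integrand $\langle\dot u_i,\nu_{ij}\rangle+\langle\dot u_j,\nu_{ji}\rangle$. Working in the $2$-plane $N$ normal to $F_{ij}$ in $T_x\mathbb H^n$, in which $\{u_i,\nu_{ij}\}$ and $\{u_j,\nu_{ji}\}$ are orthonormal frames, one expands $u_j=-\cos\theta_F\,u_i+\sin\theta_F\,\nu_{ij}$ and symmetrically; differentiating $\langle u_i,u_j\rangle=-\cos\theta_F$ and using $\langle\dot u_i,u_i\rangle=\langle\dot u_j,u_j\rangle=0$ yields
\[
\langle\dot u_i,\nu_{ij}\rangle+\langle\dot u_j,\nu_{ji}\rangle=\dot\theta_F .
\]
Combining the three displays gives $(1-n)\frac{d}{dt}\vol(P_t)=\sum_F\vol_{n-2}(F)\,\dot\theta_F$, as claimed. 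I expect the main obstacle to be exactly this last step: fixing the outward directions of $u_i$ and $\nu_{ij}$ consistently so that the interior wedge of angle $\theta_F$ produces $+\dot\theta_F$ rather than its negative, and justifying that the face lattice is genuinely constant along a $C^1$ path (treating the lower-dimensional locus where faces degenerate by continuity, and, if ideal vertices are allowed, checking that the flux contributions near infinity vanish).
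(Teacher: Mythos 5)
Your argument is correct, but there is nothing in the paper to compare it against: the Schl\"{a}fli formula appears in Section 2 purely as a quoted classical result, with the proof deferred to Hodgson's thesis, so your hyperboloid-model derivation is a genuine self-contained proof rather than a variant of the paper's. The three steps all check out. The sign in the transport formula is right: if $\langle x,\dot u_i\rangle>0$ then $x$ leaves the half-space $\{\langle x,u_i\rangle\le 0\}$, so the outward normal velocity of the facet is indeed $-\langle x,\dot u_i\rangle$. The eigenfunction identity $\mathrm{Hess}\,f_w=f_w\,g$, hence $\Delta f_w=(n-1)f_w$ on the totally geodesic $\mathbb{H}^{n-1}$ carrying the facet, applies precisely because $\langle\dot u_i,u_i\rangle=0$ places $\dot u_i$ in $u_i^{\perp}$, and since $\nu_{ij}$ is tangent to that hyperplane and orthogonal to $x$, the boundary integrand $\langle\nabla f_{\dot u_i},\nu_{ij}\rangle$ really equals $\langle\dot u_i,\nu_{ij}\rangle$. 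At a codimension-$2$ face, with the conventions $P_t=\{\langle x,u_k\rangle\le 0\ \forall k\}$ and $\langle u_i,u_j\rangle=-\cos\theta_F$, one checks that moving from $F_{ij}$ in the direction $\nu_{ij}$ increases $\langle\cdot,u_j\rangle$, so $\langle u_j,\nu_{ij}\rangle=+\sin\theta_F$; differentiating $\langle u_i,u_j\rangle=-\cos\theta_F$ and dividing by $\sin\theta_F\neq 0$ (automatic for a nondegenerate face) then gives $\langle\dot u_i,\nu_{ij}\rangle+\langle\dot u_j,\nu_{ji}\rangle=\dot\theta_F$, and since $\langle u_i,u_j\rangle$ is independent of the point, $\theta_F$ is constant along $F$ and the integral collapses to $\mathrm{Vol}_{n-2}(F)\,\dot\theta_F$, yielding exactly $(1-n)\,d\,\mathrm{Vol}(P_t)=\sum_F \mathrm{Vol}_{n-2}(F)\,d\theta_F$. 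Two caveats, which you partially flag yourself, deserve explicit mention: your divergence-theorem step needs the facets to have finite $(n-1)$-volume, so the proof as written covers compact polyhedra, and the extension to polyhedra with ideal vertices for $n\ge 4$ (which is how the paper actually uses the formula) requires the limiting/flux-decay argument you allude to at the end; and the constancy of the face lattice along the path is really a hypothesis on the family (or must be handled at isolated degeneration times by the continuity argument you sketch), not a consequence of $C^1$-smoothness alone.
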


Note that the Schl\"{a}fli formula also applies to a smooth one-parameter family of hyperbolic polyhedra with some ideal vertices when $n\geq 4$. In dimension greater than or equal to $4$, the $(n-2)$-dimensional volume of geodesic simplices of codimension $2$ is uniformly bounded from above. Thus the Schl\"{a}fli formula remains valid for ideal hyperbolic polyhedra without any change in the formula.
On the other hand, the Schl\"{a}fli formula is no longer valid in the $3$-dimensional case, since some edge lengths of an ideal hyperbolic polyhedron become infinite. However, the Schl\"{a}fli formula still holds for ideal hyperbolic polyhedra if the edge lengths are measured after removing small horoball neighborhoods of the ideal vertices. For more details, we refer the reader to \cite[Chapter 5]{Hod}.

\section{Reformulation of the volume of representations}\label{sec:volume}

Let $\Gamma$ be a nonuniform lattice in $\SO$. Then $M=\Gamma \backslash \H$ is a complete, noncompact, connected, orientable hyperbolic manifold with finite volume which is homeomorphic to the interior of a compact manifold $\oM$ with boundary. Let $\partial_1 \oM, \ldots, \partial_r \oM$ be the connected components of $\partial \oM$. There is a one-to-one correspondence between the connected components of $\partial \oM$ and the ends of $M$. Let $E_i$ be the end of $M$ associated to $\partial_i \oM$. Each end $E_i$ of $M$ has a neighborhood homeomorphic to $\Gamma_{\xi_i}\backslash U_{\xi_i}$ for some $\xi_i \in \partial \H$ where $U_{\xi_i}$ is a horoball centered at $\xi_i$ and $\Gamma_{\xi_i}$ is the stabilizer of $\xi_i$ in $\Gamma$ for $i=1,\ldots,r$. Then it can be easily seen that
$$\widehat{\widetilde M}=\H \bigcup \cup_{i=1}^r \Gamma \xi_i \text{ and }\wM=\Gamma\backslash \widehat{\widetilde M}.$$
Recall that $\wM$ can be obtained by collapsing each connected component of $\partial \oM$ to a point. From this point of view, we have a quotient map $$\pi : (\oM,\partial \oM) \rightarrow (\wM, \cup_{i=1}^r c_i)$$ where $c_i=\hat{p}(\xi_i)$ for $i=1,\ldots,r$. Obviously $\pi(\partial_i \oM)=c_i$ for all $i$.

Let $X$ be a topological space and $G$ be a group acting on $X$. Let $S_k(X)$ be the abelian group generated by arbitrary $(k+1)$-tuples of points of $X$ modulo $$\langle x_0,\ldots,x_k \rangle = \mathrm{sign}(\tau) \langle x_{\tau(0)},\ldots,x_{\tau(k)} \rangle$$ for any permutation $\tau$ of $\{0,\ldots,k\}$ and $\langle x_0,\ldots,x_k \rangle =0$ if the $x_i$ are not distinct. Define a boundary map $\partial : S_k(X) \rightarrow S_{k-1}(X)$ by $$ \partial \langle x_0,\ldots,x_k \rangle = \sum_{i=0}^k (-1)^i \langle x_0,\ldots,\hat{x}_i,\ldots, x_k \rangle. $$
The $G$-action on $X$ extends to an action on $S_k(X)$: for $g\in G$, $$g\langle x_0,\ldots,x_k \rangle = \langle gx_0,\ldots,gx_k \rangle.$$ Let $S_k(X)_G$ be the free abelian group with coefficients in $\mathbb R$ generated by equivalence classes under the canonical relation associated with the $G$-action on $S_k(X)$. Then a homology $H_*(G,X)$ is defined by $$H_*(G,X):= H_*(S_*(X)_G)$$ and a cohomology $H^*(G,X)$ is defined by $$H^*(G,X):=H^*(\mathrm{Hom}(S_*(X)_G,\mathbb R)),$$ where $\mathrm{Hom}(S_*(X)_G,\mathbb R)$ denotes the real valued homomorphisms on $S_*(X)_G$. This kind of homology was actually introduced by Neumann and Yang \cite[Section 8]{NY} to generalize the Bloch invariants of hyperbolic $3$-manifolds in a homological way. In addition, we define a bounded cohomology $H^*_b(G,X)$ as the cohomology of the subcocomplex $\mathrm{Hom}_b(S_*(X)_G,\mathbb R)$ of bounded real valued homomorphisms on $S_*(X)_G$.

\begin{remark}  It follows by the definition that $$H_*(\pi_1(M),\widetilde{M})=H_*(M) \text{ and } H_*(\pi_1(M),\widehat{\widetilde M})=H_*(\wM)$$ for a manifold $M$. In particular, when $G$ is a locally compact group and $X$ is a locally compact space with continuous proper $G$-action such that $G\backslash X^{k+1}$ is paracompact for all $k\geq 0$, there is a canonical inclusion map $H^*_{c,b}(G,\mathbb R) \hookrightarrow H^*_b(G,X)$ as follows: First, consider the continuous bounded cohomology $H^*_{c,b}(G,\mathbb R)$ as the cohomology of the cocomplex $C^*_{c,b}(X,\mathbb R)^G $. This is possible due to \cite[Theorem 7.4.5]{Monod}. Then in a natural way, continuous bounded $G$-invariant functions on $X^k$ define  homomorphisms in $\mathrm{Hom}(S_k(X)_G,\mathbb R)$. This correspondence induces a cochain map $$C^*_{c,b}(X,\mathbb R)^G  \rightarrow \mathrm{Hom}(S_*(X)_G,\mathbb R).$$
Therefore a canonical inclusion map $H^*_{c,b}(G,\mathbb R) \hookrightarrow H^*_b(G,X)$ is well-defined.
\end{remark}

Let $\rho : \Gamma \rightarrow \SO$ be a representation. Assume that there exists a continuous $\rho$-equivariant map $f: \widehat{\widetilde M} \rightarrow \cH$. We require that the restriction of $f$ to $\widetilde M$ has its image in $\H$, i.e., $f|_{\widetilde M} : \widetilde M \rightarrow \H$. Then we have the following commutative diagram.
\begin{equation}\label{diagram1} \begin{CD} $\xymatrixcolsep{3pc}\xymatrix{
H^n(\SO,\cH) \ar[r]^-{f^*} & H^n(\Gamma, \widehat{\widetilde M}) \ar[rd]^-{\pi^*} \\
H^n_b(\SO,\cH) \ar[r]^-{f^*_b} \ar[d]^-{res_1} \ar[u]_-{c_1} &
H^n_b(\Gamma, \widehat{\widetilde M}) \ar[d]^-{res_2} \ar[rd]^-{\pi^*_b} \ar[u]_-{c_2} & H^n(\oM,\partial \oM)\\
H^n_b(\SO,\H) \ar[r]^-{(f|_{\widetilde M})_b^*} &
H^n_b(\Gamma,\widetilde M) &
H^n_b(\oM,\partial \oM) \ar[l]_-{i^*_b} \ar[u]_-{c_3}}$ \end{CD}\end{equation}

In the above diagram, we use `$c$' to denote comparison maps  and `$res$' for restriction maps.
The map $$i^*_b : H^n_b(\oM,\partial \oM) \rightarrow H^n_b(\Gamma,\widetilde M) = H^n_b(M)$$ is induced from the natural inclusion map $i:(M,\emptyset) \rightarrow (\oM,\partial \oM)$. In fact, $i^*_b$ is an isometric isomorphism between two bounded cohomologies since each connected component of $\partial \oM$ has amenable fundamental group \cite{BBIPP,KK}. The map $\pi^*$ can be understood as follows: $$\pi^* : H^n(\Gamma, \widehat{\widetilde M})= H^n(\widehat{M})\cong H^n(\wM,\cup_{i=1}^r c_i) \rightarrow H^n(\oM, \partial \oM).$$
The map $H^n(\wM,\cup_{i=1}^r c_i) \rightarrow H^n(\oM, \partial \oM)$ is the induced map from $\pi$.
This also works for bounded cohomology. Thus a map $\pi^*_b$ can be defined in the same way as above.

The $\SO$-invariant Riemannian volume form $\omega_{\H}$ on $\H$ gives rise to an $\SO$-invariant continuous function $\vol_n : (\H )^{n+1} \rightarrow \mathbb R$ defined by
\begin{eqnarray}\label{eqn:volcocycle} \vol_n(x_0,\ldots,x_n)=\int_{[x_0,\ldots,x_n]} \omega_{\H},\end{eqnarray} where $[x_0,\ldots,x_n]$ is the geodesic simplex in $\H$ with vertices $x_0,\ldots,x_n$. Since $\vol_n$ is a cocycle in $C^n_c(\H,\mathbb R)^{\SO}$, it determines a continuous cohomology class in $H^n_c(\SO,\mathbb R)$, denoted by $\omega_n^c$. According to the Van Est isomorphism,
$$H^n_c(\SO,\mathbb R) = \mathbb R \cdot \omega_n^c.$$
The cocycle $\vol_n$ is actually bounded and thus it determines a continuous bounded cohomology class $\omega_n^{c,b} \in H^n_{c,b}(\SO,\mathbb R)$.

Denote by $\omega_n^b \in H^n_b(\SO,\H)$ the image of $\omega_n^{c,b}$ via the canonical inclusion $H^n_{c,b}(\SO,\mathbb R) \hookrightarrow H^n_b(\SO,\H)$. The composition of $(f|_{\widetilde M})_b^*$ with the canonical inclusion, $$H^n_{c,b}(\SO,\mathbb R) \rightarrow H^n_{b}(\SO, \H),$$
is canonically identified with the map $\rho^*_b : H^n_{c,b}(\SO,\mathbb R) \rightarrow  H^n_b(\Gamma, \widetilde M)=H^n_b(M,\mathbb R)=H^n_b(\Gamma,\mathbb R)$. This implies that $(f|_{\widetilde M})_b^* (\omega^b_n)=\rho^*_b (\omega^{c,b}_n)$ and the map $(f|_{\widetilde M})_b^*$ does not depend on the choice of $f$ but only on the representation $\rho$.

\begin{definition}
The volume $\vol(\rho)$ of a representation $\rho:\Gamma \rightarrow \SO$ is defined as $$\vol(\rho) = \langle (c_3 \circ (i^*_b)^{-1} \circ (f|_{\widetilde M})^*_b) (\omega_n^b), [\oM,\partial \oM] \rangle.$$
\end{definition}

Note that the above definition works because $i^*_b$ is an isometric isomorphism.
Bucher--Burger--Iozzi \cite{BBI} gave a complete proof of volume rigidity from the point of view of bounded cohomology.
\begin{theorem}[Bucher--Burger--Iozzi, \cite{BBI}]\label{BBI} Let $n\geq 3$. Let $i : \Gamma \hookrightarrow \SO$ be a lattice embedding and let $\rho : \Gamma \rightarrow \SO$ be any representation. Then $$|\vol(\rho)|\leq |\vol(i)|=\vol(M),$$ with equality if and only if $\rho$ is conjugate to $i$ by an isometry.\end{theorem}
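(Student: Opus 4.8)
The plan is to read off both halves of the statement from the bounded-cohomological description of $\vol(\rho)$ given above, in which $\vol(\rho)$ is the pairing of $\rho^*_b(\omega_n^{c,b})$ with the fundamental class $[\oM,\partial\oM]$. I would first prove the inequality $|\vol(\rho)|\le\vol(M)$ as a norm estimate. The decisive numerical input is that the sup norm of the volume cocycle $\vol_n$ equals $v_n$, the supremum of the volumes of geodesic simplices in $\H$; for $n\ge 3$ this supremum is finite and is attained exactly on regular ideal simplices (Haagerup--Munkholm). Since $(f|_{\widetilde M})^*_b$, which realizes $\rho^*_b$, is induced by a norm-nonincreasing cochain map, one has $\|\rho^*_b(\omega_n^{c,b})\|_\infty\le v_n$. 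Dualizing against $[\oM,\partial\oM]$ through the $\ell^1$-homology/bounded-cohomology pairing, and using the Gromov--Thurston proportionality $\|\oM,\partial\oM\|=\vol(M)/v_n$ for the simplicial volume, gives $|\vol(\rho)|\le v_n\cdot\|\oM,\partial\oM\|=\vol(M)$. For the lattice embedding $i$ one takes $f$ to be the identity on $\widetilde M=\H$, so that $\vol(i)=\vol(M)$ and the inequality is sharp.

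For the equality case I would argue at the boundary. Amenability of the $\Gamma$-action on $\partial\H$ first yields a measurable $\rho$-equivariant boundary map $\phi\co\partial\H\to\partial\H$: the amenable boundary furnishes an equivariant map into the probability measures on the target sphere, and the assumed equality forces this map to be a Dirac mass almost everywhere, hence an honest point map $\phi$. Next I would convert the equality of norms into pointwise extremality: representing the volume class at the boundary by the signed ideal-simplex cocycle $\vol_n$ on $(\partial\H)^{n+1}$, the hypothesis $|\vol(\rho)|=\vol(M)$ forces
$$\vol_n\bigl(\phi(\xi_0),\ldots,\phi(\xi_n)\bigr)=\vol_n(\xi_0,\ldots,\xi_n)$$
for almost every $(n+1)$-tuple $(\xi_0,\ldots,\xi_n)$. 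Thus $\phi$ carries almost every positively oriented regular ideal simplex to a positively oriented regular ideal simplex of the same maximal volume $v_n$.

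The last step is a rigidity statement for such volume-maximizing configurations: a measurable self-map of $\partial\H$ that preserves regular ideal $(n+1)$-tuples almost everywhere must coincide almost everywhere with the boundary trace of a single isometry $g\in\SO$. The mechanism is that two regular ideal simplices sharing a codimension-one face differ by the reflection in that face, so the adjacency structure of regular ideal simplices propagates $\phi$ rigidly from one chamber across $\partial\H$ and pins it down as a M\"obius transformation. Equivariance $\phi\circ i(\gamma)=\rho(\gamma)\circ\phi$ then gives $\rho(\gamma)=g\,i(\gamma)\,g^{-1}$ for all $\gamma\in\Gamma$, the asserted conjugacy. The main obstacle is exactly this last rigidity: upgrading almost-everywhere extremality of the boundary cocycle to genuine M\"obius regularity of $\phi$, while controlling the measure-theoretic degeneracies and the cusp behaviour, where the equivariant map $f$ sends ideal points of $\widehat{\widetilde M}$ into $\cH\setminus\H$.
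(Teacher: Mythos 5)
You should first note that the paper does not prove this statement at all: Theorem \ref{BBI} is imported verbatim from Bucher--Burger--Iozzi \cite{BBI}, and the surrounding text only records that their proof is carried out in bounded cohomology. So the only meaningful comparison is with the cited proof, and on that count your outline is faithful to it: the Milnor--Wood half via the norm bound $\|\rho^*_b(\omega_n^{c,b})\|_\infty\le v_n$ (Haagerup--Munkholm), the duality pairing with $[\oM,\partial \oM]$, and the proportionality $\|\oM,\partial\oM\|=\vol(M)/v_n$ (valid here because the boundary components have amenable fundamental groups, which is also what makes $i^*_b$ in the paper's diagram~(\ref{diagram1}) an isometric isomorphism); then, for equality, a Furstenberg-type boundary map carrying a.e.\ regular ideal simplex to a regular ideal simplex, followed by the Gromov--Thurston reflection argument.

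However, as a standalone proof your proposal has two genuine gaps, and they are precisely the substantive content of \cite{BBI}. First, the passage from the measure-valued map $\partial\H\to\mathcal P(\partial\H)$ supplied by amenability to an a.e.\ point map is not ``forced'' in one line by the equality hypothesis: one must pull back the volume cocycle through the measure-valued map, use that $|\overline{\vol}_n|\le v_n$ with equality exactly on regular ideal configurations, deduce that almost every product measure is concentrated on vertex sets of regular ideal simplices, and only then extract Dirac masses using equivariance and ergodicity of the $\Gamma$-action on products of the boundary; your assertion hides all of this. Second, and more seriously, the step you yourself flag as ``the main obstacle'' --- that a measurable self-map of $\partial\H$ preserving regular ideal simplices almost everywhere agrees a.e.\ with the boundary extension of a single isometry --- is exactly the rigidity theorem at the heart of the Gromov--Thurston/BBI argument, and it is also where the hypothesis $n\ge 3$ enters (in dimension $2$ every ideal triangle is regular, so the mechanism fails, which is consistent with the failure of rigidity for surface groups). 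Describing the reflection-propagation heuristic does not prove it; making it rigorous for a map that is only measurable, with control of null sets under the reflection group generated by faces of a regular ideal simplex, is the hard part. So your proposal is a correct road map of the cited proof, but it is not a proof.
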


Now we will reformulate the volume of $\rho$ with the commutative diagram (\ref{diagram1}).
First observe that the cocycle $\vol_n : (\H)^{n+1} \rightarrow \mathbb R$ can be extended to an $\SO$-invariant continuous function $\overline{\vol}_n : \left(\cH \right)^{n+1} \rightarrow \mathbb R$ by using the same formula (\ref{eqn:volcocycle}). This is possible because a geodesic simplex with vertices on $\cH$ is well-defined. Furthermore, the volume of the geodesic simplices with vertices on $\cH$ is uniformly bounded from above.
Hence, $\overline{\vol}_n$ defines both a cohomology class $\overline{\omega}_n \in H^n(\SO,\cH)$ and a bounded cohomology class $\overline{\omega}_n^b \in H^n_b(\SO,\cH)$. Notice that the restriction of $\overline{\vol}_n$ to $(\H)^{n+1}$ is $\vol_n$ and so, $res_1(\overline{\omega}_n^b)=\omega_n^b$. From the commutativity of the diagram (\ref{diagram1}), we have
\begin{eqnarray*} (c_3 \circ (i^*_b)^{-1} \circ (f|_{\widetilde M})^*_b )({\omega}_n^b)&=&(c_3 \circ (i^*_b)^{-1} \circ (f|_{\widetilde M})^*_b \circ res_1)(\overline{\omega}_n^b)\\
&=&(\pi^* \circ  f^* \circ c_1 )(\overline{\omega}_n^b)\\&=&(\pi^* \circ f^* )(\overline{\omega}_n).
\end{eqnarray*}
This implies that
\begin{eqnarray*} \vol(\rho) &=& \langle (\pi^* \circ f^* )(\overline{\omega}_n), [\oM,\partial \oM] \rangle \\ &=& \langle f^* (\overline{\omega}_n), \pi_*[\oM,\partial \oM] \rangle = \langle f^* (\overline{\omega}_n), [\widehat M] \rangle. \end{eqnarray*}
Here, note that the value $\langle f^* \overline{\omega}_n, [\widehat M] \rangle$ is independent of the choice of $f$ because $\vol(\rho)$ depends only on the representation $\rho :\Gamma \rightarrow \SO$. In fact, it can be seen that for a pseudo-developing map $D_\rho : \widehat{\widetilde M} \rightarrow \cH$, $$\vol(\rho)=\langle  D_\rho^* (\overline{\omega}_n), [\widehat M] \rangle= \int_M D_\rho^* \omega_{\H}.$$
This verifies that two different definitions of Francaviglia--Klaff \cite{FK} and Bucher--Burger--Iozzi \cite{BBI} for the volume of a representation of a nonuniform lattice in fact give the same value.

\section{Construction of a fundamental cycle}\label{sec:triangulation}

In the previous section, we reformulated the volume of a representation in terms of a continuous equivariant map and the fundamental class of $\wM$. If we have a `good' equivariant map and fundamental cycle, we can easily measure the variation of values of the volume of a representation over $\mathrm{Hom}(\Gamma,\SO)$. In this section, we construct a certain good fundamental cycle of $\wM$.

Recall that $\widehat{\widetilde M}= \H \cup \cup_{i=1}^r \Gamma \xi_i $ and $\wM = \Gamma \backslash \left( \H \cup \cup_{i=1}^r \Gamma \xi_i \right)$. We define a submanifold $M_0$ of $M$ by $$M_0 = \Gamma \backslash \left( \H - \cup_{i=1}^r \Gamma U_{\xi_i} \right).$$
Then $M_0$ is a compact core of $M$ whose boundary consists of projections of horospheres in $\H$.
Thus $M_0$ is a differentiable manifold with boundary. By the work of Munkres \cite{Mu}, there is a triangulation of $M_0$. In fact, any $ C^k$ triangulation of the boundary can be extended to a $C^k$ triangulation of the whole manifold.

Let $\mathcal T_{M_0}$ be a triangulation of $M_0$. It induces a triangulation of $\partial M_0$, denoted by  $\T_{\partial M_0}$. Then $\T_{M_0}$ is lifted to a triangulation $\mathcal T_{\widetilde M_0}$ of $\widetilde M_0$ and $\T_{\partial M_0}$ is lifted to a triangulation $\mathcal T_{\partial \widetilde M_0}$ of $\partial \widetilde M_0$. Note that $$\widetilde M_0 =\H - \cup_{i=1}^r \Gamma U_{\xi_i} \text{ and } \partial \widetilde M_0 = \cup_{i=1}^r \Gamma \partial U_{\xi_i}.$$

If $\tau$ is an $(n-1)$-simplex on a horosphere $\gamma \partial U_{\xi_i}$ for some $\gamma\in \Gamma$, then take the geodesic cone on $\tau$ with the top point $\gamma \xi_i$. Denote it by $\mathrm{Cone}(\tau)$. Proceed in this way for all the simplices in the triangulation $\T_{\partial \widetilde M_0}$ of $\partial \widetilde M_0$. Then
we obtain a triangulation of $\cup_{i=1}^r \Gamma (U_{\xi_i} \cup \xi_i )$, denoted by $\T_{ \mathrm{Cone} (\partial \widetilde M_0)}$. Finally, two $\Gamma$-equivariant triangulations $\T_{\widetilde M_0}$ and $\T_{ \mathrm{Cone} (\partial \widetilde M_0)}$ induce a $\Gamma$-equivariant triangulation $\T$ of $\widehat{\widetilde M}$ and this triangulation descends to a triangulation of $\wM$, denoted by $\T_{\wM}$.

The triangulation $\T_{\wM}$ gives rise to a fundamental cycle $\widehat z$ of $\widehat{M}$. Note that a simplex occurring in $\widehat z$ comes from either a simplex of $\T_{\widetilde M_0}$ or a simplex of $\T_{ \mathrm{Cone} (\partial \widetilde M_0)}$. Thus  $\widehat z$ can be expressed as $$\widehat z = \sum_{\bar s \in \T_{M_0}} \bar s +\sum_{\bar \tau \in \T_{\partial M_0}} \mathrm{Cone}(\bar \tau)$$
where $\mathrm{Cone}(\bar \tau)=\hat p (\mathrm{Cone}(\tau))$ for a lift $\tau$ of $\bar \tau$ to $\widehat{\widetilde M}$. It is easy to see that $\mathrm{Cone}(\bar \tau)$ is independent of the choice of a lift $\tau$.

Let $s$ and $\tau$ denote lifts of $\bar s$ and $\bar \tau$ to $\widehat{\widetilde M}$. Using the fundamental cycle $\widehat z$ of $\wM$, the volume of $\rho$ can be computed as follows:
\begin{eqnarray*}\label{voleqn} \lefteqn{\vol(\rho)= \sum_{\bar s \in \T_{M_0}}  \vol_n(f(s(e_0)),\ldots,f(s(e_n)))} \\  & & \ \ \ \ \ \ \ \ \ +\sum_{\bar \tau \in \T_{\partial M_0}}  \vol_n(f(\mathrm{Cone}(\tau)(e_0)),\ldots,f(\mathrm{Cone}(\tau)(e_n))) \end{eqnarray*} where $e_0,\ldots,e_n$ are the vertices of the standard $n$-simplex $\Delta^n$.
Since the restriction $f|_{\widetilde M}$ maps $\widetilde M$ to $\H$, $[f(s(e_0)),\ldots,f(s(e_n))]$ is a geodesic simplex in $\H$ and $[f(\mathrm{Cone}(\tau)(e_0)),\ldots,f(\mathrm{Cone}(\tau)(e_n))]$ is an (ideal) geodesic simplex with at most one ideal vertex.

\section{Construction of an equivariant map}\label{sec:equimap}

In this section, we will construct a $\rho$-equivariant map $f: \widehat{\widetilde M} \rightarrow \cH$ whose restriction to $\widetilde M$ has its image in $\H$. We start by recalling the following lemma due to Zickert \cite{Zi}.

\begin{lemma}\label{uniquemap} Let $\Delta$ be an $n$-simplex in $\mathbb R^n$ with an ordering of the vertices. Given any geodesic $n$-simplex $\Delta' \in \cH$ with a vertex ordering, there exists a unique homeomorphism from $\Delta$ to $\Delta'$ which restricts to an order-preserving map of vertices and takes Euclidean straight lines to hyperbolic straight lines.\end{lemma}

Lemma \ref{uniquemap} can be seen if one works in the Klein model of hyperbolic space where the hyperbolic geodesic lines and the Euclidean straight lines coincide. For more details, see \cite[Section 4]{Zi}.

\begin{lemma}\label{lem:equimap} There exists a triangulation $\T$ of $\widehat{\widetilde M}$ and a continuous, $\rho$-equivariant map $f: \widehat{\widetilde M} \rightarrow \cH$ that is nondegenerate in the sense that the image of any simplex of $\T$ is a nondegenerate, geodesic (ideal) simplex. Furthermore, the restriction of $f$ to $\widetilde M$ has its image in $\H$.
\end{lemma}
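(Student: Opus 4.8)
The plan is to build $f$ by prescribing its values on the $0$-skeleton of the $\Gamma$-equivariant triangulation $\T$ of $\widehat{\widetilde M}$ constructed in Section \ref{sec:triangulation}, and then to straighten each simplex by means of Lemma \ref{uniquemap}. The vertices of $\T$ fall into two classes: the genuine vertices lying in $\widetilde M_0\subset\H$, and the ideal vertices, which form the $\Gamma$-orbits $\Gamma\xi_1,\ldots,\Gamma\xi_r$. First I would fix the images of the ideal vertices. For each cusp $\xi_i$ the stabilizer $\Gamma_{\xi_i}$ is virtually abelian, so its image $\rho(\Gamma_{\xi_i})$ is an elementary (amenable) subgroup of $\SO$; I would choose a point $w_i\in\cH$ fixed by $\rho(\Gamma_{\xi_i})$ and set $f(\xi_i)=w_i$. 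Because $\xi_i$ and $\xi_j$ lie in distinct $\Gamma$-orbits for $i\neq j$ and $w_i$ is $\rho(\Gamma_{\xi_i})$-invariant, the rule $f(\gamma\xi_i)=\rho(\gamma)w_i$ is a well-defined, $\rho$-equivariant assignment on all ideal vertices.

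Next I would fix the images of the genuine vertices. Since $\Gamma$ acts freely on $\widetilde M_0$ and $\T_{M_0}$ is a finite triangulation of the compact core $M_0$, the $0$-skeleton of $\T_{\widetilde M_0}$ consists of finitely many free $\Gamma$-orbits; choosing one representative $v$ per orbit together with a value $f(v)\in\H$ and extending by $f(\gamma v)=\rho(\gamma)f(v)$ gives a well-defined $\rho$-equivariant map on all genuine vertices with image in $\H$. With $f$ defined on the $0$-skeleton, I would extend it over each simplex $\sigma$ of $\T$ by declaring $f|_\sigma$ to be the straightening of Lemma \ref{uniquemap} onto the geodesic simplex in $\cH$ spanned by the images of the vertices of $\sigma$ (in the Klein model this is simply the straight-line parametrization). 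Since the straightening of a face is the restriction of the straightening of the whole simplex, these maps agree on common faces and assemble into a continuous map on $\widetilde M_0$ and on the cone region; equivariance is automatic because each $\rho(\gamma)$ is an isometry carrying geodesic simplices to geodesic simplices and straight lines to straight lines, so it commutes with straightening by the uniqueness clause of Lemma \ref{uniquemap}. By construction $f(\widetilde M)\subset\H$, each simplex of $\T_{M_0}$ is sent to a genuine geodesic simplex, and each cone simplex $\mathrm{Cone}(\tau)$ is sent to a geodesic simplex with the single ideal vertex $w_i$.

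For nondegeneracy I would use that there are only finitely many $\Gamma$-orbits of simplices. Nondegeneracy of a geodesic simplex is equivalent, in the Klein model, to affine independence of its $n+1$ vertices, and it is preserved by the isometries $\rho(\gamma)$; hence it suffices to arrange it for one representative per orbit. After refining $\T_{M_0}$ if necessary so that no simplex has two vertices in the same $\Gamma$-orbit, the finitely many representative genuine vertex values $f(v)\in\H$ may be perturbed freely, and a generic choice avoids the finitely many proper subvarieties cut out by the degeneracy conditions, making every representative simplex—including the cone simplices, whose remaining vertex $w_i$ is fixed—affinely independent.

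The main obstacle is the behavior at the ideal vertices: one must both produce the fixed point $w_i$ and verify that $f$ is genuinely continuous at each ideal point of $\widehat{\widetilde M}$. Writing a point approaching $\xi_i$ in the form $\gamma_k q_k$ with $\gamma_k\in\Gamma_{\xi_i}$ and $q_k$ approaching $w_i$ within the finitely many representative cone simplices, continuity at $\xi_i$ reduces to the statement that $\rho(\gamma_k)q_k\to w_i$. This is immediate when $\rho(\Gamma_{\xi_i})$ is of elliptic or parabolic type—rotations about $w_i\in\H$, respectively Euclidean motions preserving the horoballs at $w_i\in\partial\H$, both preserve a neighborhood basis of $w_i$—and it is exactly the point at which the control of the parabolic subgroups of $\Gamma$ along the representation becomes essential. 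I expect this cusp analysis, namely choosing $w_i$ compatibly with the type of $\rho(\Gamma_{\xi_i})$ and checking uniform convergence to $w_i$, to be the crux of the argument.
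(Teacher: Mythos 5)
Your construction coincides with the paper's own proof: you prescribe $f$ on the $0$-skeleton equivariantly (a fixed point $w_i\in\cH$ of the amenable group $\rho(\Gamma_{\xi_i})$ at each ideal vertex, one value in $\H$ per $\Gamma$-orbit of genuine vertices), straighten each simplex by Lemma \ref{uniquemap}, and obtain nondegeneracy by a genericity/perturbation argument; the paper runs the same argument as an induction, observing that the degeneracy locus for a cone simplex, viewed as a condition on one vertex, is a hyperbolic hyperplane $Q$, so that vertex can be moved off $Q$ inside its stability neighborhood. Up to your last paragraph there is nothing to object to.

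The genuine problem is that last paragraph, the part you call the crux. Your reduction of continuity at an ideal point $\hat\xi_i$ to the statement that $\rho(\gamma_k)q_k\to w_i$ tacitly assumes that $\hat\xi_i$ has a neighborhood basis in $\widehat{\widetilde M}$ consisting of uniform, horoball-like sets, i.e.\ the topology that $\H\cup\bigcup_i\Gamma\xi_i$ would inherit as a subspace of $\cH$. That is not the topology of $\widehat{\widetilde M}$: by definition it is the quotient of $\widetilde{\oM}$ obtained by collapsing each boundary component to a point, so a neighborhood of the cone point $\hat\xi_i$ is the image of a saturated open set and may pinch arbitrarily along the corresponding horosphere. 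In this topology your sequences $\gamma_k q_k$, with $\gamma_k\to\infty$ in $\Gamma_{\xi_i}$ and $q_k$ in finitely many representative cone simplices, do not converge to $\hat\xi_i$ at all, so no condition on $\rho(\gamma_k)$ is needed: continuity of $f$ is equivalent to continuity of $f\circ q$ on $\widetilde{\oM}$, where $q$ is the quotient map, and this follows from continuity of the straightening on each closed simplex (Lemma \ref{uniquemap}) together with local finiteness of the triangulation along each boundary component, equivariance handling the cone simplices far out along the horosphere. This is why the paper's proof never addresses the point. The issue is not pedantic: with the subspace topology from $\cH$ the lemma would actually be false (take $\rho$ trivial and $q_k=q$ a fixed point of a cone simplex; then $\gamma_k q\to\xi_i$ in $\cH$ while $f(\gamma_k q)=\rho(\gamma_k)f(q)=f(q)$ stays away from $w_i$), and your proposed verification would not close the gap in any case, since horoballs based at $w_i\in\partial\H$ are not a neighborhood basis of $w_i$ in $\cH$, and $\rho(\Gamma_{\xi_i})$ fixing a boundary point need not be parabolic: it lies in a minimal parabolic subgroup $P=MAN$ and may contain loxodromic elements. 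So the correct conclusion is the opposite of your expectation: with the right topology the cusp analysis is vacuous, while in the topology you implicitly use it would be unfixable.
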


\begin{proof}
Our construction basically follows that of Besson--Courtois--Gallot \cite[Section 5]{BCG} in the uniform lattice case. We first build a $\Gamma$-equivariant triangulation $\mathcal T$ of $\widehat{\widetilde M}$ and then define $f$ on each simplex of $\mathcal T$.

We stick to the notation of Section \ref{sec:triangulation}. As described in the previous section, if a triangulation $\T_{M_0}$ on the compact core $M_0$ is given, we obtain the $\Gamma$-equivariant triangulation $\T$ on $\widehat{\widetilde M}$ and the triangulation $\T_{\wM}$ on $\wM$ induced from $\T$.
By Lemma \ref{uniquemap}, it can be easily seen that $f$ is uniquely determined by its value on the vertices of $\T$. Hence we will construct $f$ by choosing its value on the vertices of $\T$.  Note that the vertices of $\T$ are the union of the vertices of $\T_{\widetilde M_0}$ and $\cup_{i=1}^r \Gamma \xi_i$.
We first choose the value of $f$ on the vertices of $\T_{\widetilde M_0}$ and then on $\cup_{i=1}^r \Gamma \xi_i$.

Choose a sufficiently fine triangulation $\mathcal T_{M_0}$ of $M_0$. Let $D_0$ be a fundamental domain of $M_0$ in $\widetilde M_0$ and $\{v_1,\ldots,v_N\}$ be the set of vertices of $\mathcal T_{\widetilde M_0}$ contained in $D_0$. We can choose $D_0$ in such a way that $v_i \neq \gamma v_j$ for all $\gamma \in \Gamma$ and $i\neq j$. Recall that the vertex set of any simplex of $\T$ is either of the form $$\{\gamma_{i_0} v_{i_0},\ldots,\gamma_{i_n}v_{i_n}  \}$$ or of the form $$\{ \gamma_{i_0}v_{i_0},\ldots,\gamma_{i_{n-1}}v_{i_{n-1}},\gamma_{i_n}\xi_{i_n} \}$$
for $\gamma_{i_0},\ldots, \gamma_{i_n} \in \Gamma$ and $\xi_{i_n} \in \{\xi_1,\ldots, \xi_r\}$. Note that for any $\gamma\in \Gamma$, $\gamma\gamma_{i_k}v_{i_k}\neq \gamma_{i_l}v_{i_l}$ for any $k\neq l$. 

Following the construction of Besson--Courtois--Gallot in \cite{BCG}, one can choose $y_1,\ldots,y_N \in \H$ such that if $\{\gamma_{i_0} v_{i_0},\ldots,\gamma_{i_n}v_{i_n}  \}$ is the vertex set of a simplex of $\mathcal T_{\widetilde M_0}$, the geodesic simplex $[\rho(\gamma_{i_0})y_{i_0},\ldots,\rho(\gamma_{i_n})y_{i_n}]$ is nondegenerate.
Furthermore there exists a small neighborhood $V_i$ of $y_i$ for each $i=1,\ldots,N$ such that for any choice $y_i' \in V_i$, $(y_1',\ldots,y_N')$, the nondegeneracy property still holds. See \cite[Lemma 5.2]{BCG} for a detailed proof.

Now it remains to choose the value of $f$ on $\cup_{i=1}^r\Gamma \xi_i$.
By the Margulis lemma, the stabilizer subgroup $\Gamma_{\xi_i}$ is an almost nilpotent group for all $i$. Since any amenable subgroup of $\SO$ fixes a point in $\cH$ \cite{Mo},  $\rho(\Gamma_{\xi_i})$ has at least one fixed point in $\cH$. Choose a fixed point $\eta_i$ of $\rho(\Gamma_{\xi_i})$ in $\cH$ for each $i$.
We set the value of $f$ on the vertices of $\T$ in $\Gamma$-equivariant way as follows:
\begin{itemize}
\item[-] $f(v_i)= y_i$ for $i=1,\ldots,N$ and $f(\gamma v_i)=\rho(\gamma) y_i$ for all $\gamma\in \Gamma$,
\item[-] $f(\xi_i)=\eta_i$ for $i=1,\ldots,r$ and $f(\gamma \xi_i)=\rho(\gamma) \eta_i$ for all $\gamma\in \Gamma$.
\end{itemize}

We only need to check that if $\{ \gamma_{i_0}v_{i_0},\ldots,\gamma_{i_{n-1}}v_{i_{n-1}},\gamma_{i_n}\xi_{i_n} \}$ is the vertex set of a simplex of $\T_{ \mathrm{Cone} (\partial \widetilde M_0)}$, then $[ \rho(\gamma_{i_0})y_{i_0},\ldots,\rho(\gamma_{i_{n-1}})y_{i_{n-1}},\rho(\gamma_{i_n})\eta_{i_n} ]$ is a nondegenerate (ideal) geodesic simplex. Due to the $\Gamma$-equivariance of $f$, it suffices to show this for simplices of $\T$ with the vertex set of the form $\{ v_{i_0},\gamma_{i_1}v_{i_1},\ldots,\gamma_{i_{n-1}}v_{i_{n-1}},\gamma_{i_n}\xi_{i_n} \}$.
Since a simplex of $\T_{ \mathrm{Cone} (\partial \widetilde M_0)}$ is the geodesic cone on an $(n-1)$-simplex of $\T_{\partial \widetilde M_0}$ and every simplex of $\T_{\widetilde M_0}$ is mapped to a nondegenerate geodesic simplex by $f$, $[ y_{i_0},\rho(\gamma_{i_1})y_{i_1},\ldots,\rho(\gamma_{i_{n-1}})y_{i_{n-1}}]$ is a nondegenerate geodesic $(n-1)$-simplex in $\H$.

Assume that $[ y_{i_0},\rho(\gamma_{i_1})y_{i_1},\ldots,\rho(\gamma_{i_{n-1}})y_{i_{n-1}},\rho(\gamma_{i_n})\eta_{i_n} ]$ is degenerate.
We define a set $Q$ by $$Q=\{y\in \H \ | \ [y,\rho(\gamma_{i_1})y_{i_1},\ldots,\rho(\gamma_{i_{n-1}})y_{i_{n-1}},\rho(\gamma_{i_n})\eta_{i_n} ] \text{ is degenerate} \}.$$
Note that $Q$ is an $(n-1)$-hyperbolic space in $\H$ containing $y_{i_0}$. Since $V_{i_0} \cap Q$ is a codimension $1$ subset of $V_{i_0}$, it is possible to move $y_{i_0}$ to a point in $V_{i_0}-Q$ so that $[ y_{i_0},\rho(\gamma_{i_1})y_{i_1},\ldots,\rho(\gamma_{i_{n-1}})y_{i_{n-1}},\rho(\gamma_{i_n})\eta_{i_n} ]$ is a nondegenerate geodesic simplex. By proceeding in this way for all simplices of $\T$ with vertex $v_{i_0}$, we can choose $y_{i_0}$ such that all simplices of $\T$ with vertex $v_{i_0}$ are mapped to nondegenerate geodesic simplices by $f$. Proceeding by induction on the vertex set $\{v_1,\ldots,v_N\}$, we obtain the desired map $f: \widehat{\widetilde M} \rightarrow \cH$.
\end{proof}

Note that by the construction of $f$, the restriction of $f$ to $\widetilde M$ has its image in $\H$.
Hence we can apply $f$ to the reformulation of the volume of a representation in Section \ref{sec:volume}.

\section{One cusped hyperbolic manifold case}\label{sec:onecusp}

For the sake of simplicity, we first work with one cusped hyperbolic manifolds in this section.
Throughout this section, $\Gamma_1$ denotes a nonuniform lattice in $\SO$ such that $M_1=\Gamma_1 \backslash \H$ is a one cusped hyperbolic manifold.

Let $\rho : \Gamma_1 \rightarrow \SO$ be a representation. The (unique) end of $M_1$ has a neighborhood homeomorphic to $U_\xi/\Gamma_{1,\xi}$ for some $\xi \in \partial \H$ where $\Gamma_{1,\xi}$ is the stabilizer subgroup of $\xi$ in $\Gamma_1$.
Since $\rho(\Gamma_{1,\xi})$ is amenable, it is contained in either a maximal compact subgroup or minimal parabolic subgroup of $\SO$ \cite{Mo}.
Note that a maximal compact subgroup is the stabilizer subgroup of a point of $\H$ in $\SO$, and a minimal parabolic subgroup is the stabilizer subgroup of an ideal point of $\partial \H$ in $\SO$.

Denote by $\hom_K(\Gamma_1,\SO)$ (resp. $\hom_P(\Gamma_1,\SO)$) the subset of $\hom(\Gamma_1,\SO)$ consisting of representations which send $\Gamma_{1,\xi}$ into a maximal compact subgroup (resp. a minimal parabolic subgroup) of $\SO$. These subsets are well-defined, independently of the choice of $\Gamma_{1,\xi}$.
Then it is obvious that $$\hom(\Gamma_1,\SO)=\hom_K(\Gamma_1,\SO) \cup \hom_P(\Gamma_1,\SO).$$ Note that if $\rho \in \hom_K(\Gamma_1,\SO)$, then $\rho(\Gamma_{1,\xi})$ has a fixed point on $\H$. If $\rho \in \hom_P(\Gamma_1,\SO)$, then $\rho(\Gamma_{1,\xi})$ has a fixed point on $\partial \H$.
Note that $\hom_K(\Gamma_1,\SO)$ and $\hom_P(\Gamma_1,\SO)$ can have nonempty intersection. If $G=KAN$ and $P=MAN$, then a representation
whose image is contained in $M$ belong to both of them.
\begin{lemma}\label{lem:closedset} $\hom_P(\Gamma_1,\SO)$ is a closed subset of $\hom(\Gamma_1,\SO)$.\end{lemma}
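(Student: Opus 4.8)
The plan is to reduce the statement to a clean dynamical characterization and then prove sequential closedness by a compactness-and-continuity argument. Recall from the preceding discussion that a minimal parabolic subgroup of $\SO$ is exactly the stabilizer of an ideal point of $\partial\H$. Thus I would first record the equivalence that $\rho\in\hom_P(\Gamma_1,\SO)$ if and only if $\rho(\Gamma_{1,\xi})$ has a \emph{common} fixed point on $\partial\H$, i.e. there is some $\eta\in\partial\H$ with $\rho(\gamma)\eta=\eta$ for all $\gamma\in\Gamma_{1,\xi}$. Since $\Gamma_1$ is finitely generated, $\hom(\Gamma_1,\SO)$ is a subspace of a finite power of the Lie group $\SO$ and is in particular metrizable, so it suffices to show that $\hom_P(\Gamma_1,\SO)$ is sequentially closed.

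To carry this out, I would take a sequence $(\rho_k)$ in $\hom_P(\Gamma_1,\SO)$ converging to some $\rho\in\hom(\Gamma_1,\SO)$, and for each $k$ choose an ideal point $\eta_k\in\partial\H$ fixed by every element of $\rho_k(\Gamma_{1,\xi})$. The main use of the geometry is that $\partial\H\cong S^{n-1}$ is compact, so after passing to a subsequence I may assume $\eta_k\to\eta$ for some $\eta\in\partial\H$. Then for an arbitrary fixed $\gamma\in\Gamma_{1,\xi}$, convergence in $\hom(\Gamma_1,\SO)$ gives $\rho_k(\gamma)\to\rho(\gamma)$ in $\SO$, and since the extended action $\SO\times\cH\to\cH$ is jointly continuous, passing to the limit in the relation $\rho_k(\gamma)\eta_k=\eta_k$ yields $\rho(\gamma)\eta=\eta$. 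As $\gamma$ is arbitrary and $\eta$ does not depend on $\gamma$, the point $\eta$ is a common fixed point of $\rho(\Gamma_{1,\xi})$ on $\partial\H$, so $\rho\in\hom_P(\Gamma_1,\SO)$, which is what I want.

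The hard part, such as it is, is ensuring that the limiting fixed point $\eta$ remains on the ideal boundary rather than escaping into the interior of $\cH$; this is precisely what compactness of the sphere $\partial\H$ guarantees, and it is exactly why the same reasoning does not immediately apply to $\hom_K(\Gamma_1,\SO)$, where a sequence of interior fixed points could limit to a boundary point. Beyond this, the argument relies only on the continuity of the $\SO$-action on the compactification $\cH$ and on the metrizability of the representation variety, both of which are standard, so I do not anticipate any further obstacle.
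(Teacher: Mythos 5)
Your argument is correct, and it is essentially the paper's sequential-compactness proof recast in fixed-point language, with the limit extraction handled a bit more carefully. The paper works group-theoretically: it notes that all minimal parabolics are conjugate, identifies their parameter space with $\SO/P\cong \mathrm{SO}(n)/\mathrm{SO}(n-1)\cong S^{n-1}$ (using that $P$ is self-normalizing), writes $\rho_i(\Gamma_{1,\xi})\subset k_iPk_i^{-1}$ with $k_i\in\mathrm{SO}(n)$, and concludes $\rho(\alpha)\in kPk^{-1}$ from compactness of $\mathrm{SO}(n)$ together with closedness of $P$. You instead parametrize the same compact family by the fixed points $\eta_k\in\partial\H$ and pass to the limit in the relation $\rho_k(\gamma)\eta_k=\eta_k$ via joint continuity of the extended action on $\cH$; this is faithful to the definition of $\hom_P(\Gamma_1,\SO)$, since a minimal parabolic is exactly the stabilizer of an ideal point, a fact the paper records just before the lemma. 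The underlying compactness is literally the same, but your formulation needs fewer structural inputs (no self-normalizing property of $P$, no closedness of $P$), and it treats the limits correctly: you pass to a convergent subsequence of $(\eta_k)$, which is all that is needed, whereas the paper asserts outright that $(k_i)$ and $(p_i^\alpha)$ converge --- a claim that does not follow from the hypotheses (only subconvergence does) and is repaired precisely by your subsequence step. The only point worth making explicit in a final write-up is the standard fact that $\Gamma_1$, being a lattice, is finitely generated (equivalently, countable), so that $\hom(\Gamma_1,\SO)$ is metrizable and sequential closedness indeed suffices; the paper uses sequences without comment, so this is a refinement rather than a divergence.
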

\begin{proof}
It suffices to show that every limit of a sequence in $\hom_P(\Gamma_1,\SO)$ is contained in $\hom_P(\Gamma_1,\SO)$. First note that in the case of $\SO$, all minimal parabolic subgroups are closed and conjugate to each other by an inner automorphism. Furthermore, since the normalizer of a minimal parabolic subgroup is itself, the space of minimal parabolic subgroups is identified with $\SO/P$ for a minimal parabolic subgroup $P$ of $\SO$. In fact, using the Cartan decomposition of $\SO$, it is not difficult to see that $$\SO/P \cong \mathrm{SO}(n) / \mathrm{SO}(n-1) \cong S^{n-1},$$
where $S^{n-1}$ denotes the $(n-1)$ unit sphere.

Let $(\rho_i)$ be a sequence in $\hom_P(\Gamma_1,\SO)$ converging to a representation $\rho$.
Since $\rho_i \in \hom_P(\Gamma_1,\SO)$, there exists a minimal parabolic subgroup $P_i$ such that $\rho_i(\Gamma_{1,\xi}) \subset P_i$ for each $i \in \mathbb N$. Since minimal parabolic subgroups are conjugate to each other, we can set $P_i=k_i P k_i^{-1}$ for some $k_i \in \mathrm{SO}(n)$.
For each $\alpha \in \Gamma_{1,\xi}$, let $\rho_i(\alpha)=k_i p_i^\alpha k_i^{-1}$ for some $p_i^\alpha \in P$.
Here $k_i$ depends on $\rho_i$ but $p_i^\alpha$ depends on both $\alpha$ and $\rho_i$.
Then by the assumption, the sequence $(\rho_i(\alpha))$ converges to the element $\rho(\alpha)$ of $\SO$. Since $k_i$ is in the compact subgroup $\mathrm{SO}(n)$ of $\SO$, the sequence $(p_i^\alpha)$ has to converge for all $\alpha \in \Gamma_{1,\xi}$. Moreover, $k_i$ converges.
Note that the limit points of $(p_i^\alpha)$ and $(k_i)$ are contained in $P$ and $\mathrm{SO}(n)$ respectively because both $P$ and $\mathrm{SO}(n)$ are closed. This implies that for all $\alpha \in \Gamma_{1,\xi}$,
$$\rho(\alpha) \in k P k^{-1},$$ where $k$ denotes the limit of $(k_i)$.
Therefore $\rho \in \hom_P(\Gamma_1,\SO)$.
\end{proof}



We define an open subset of $\hom(\Gamma_1,\SO)$ by $$\mathcal O = \hom(\Gamma_1,\SO)-\hom_P(\Gamma_1,\SO).$$
It is easy to see that $\O \cup \hom_P(\Gamma_1,\SO) = \hom(\Gamma_1,\SO)$ and $\O \subset \hom_K(\Gamma_1,\SO)$.
The representation $\rho$ of $\mathcal O$ has the following property on the set of fixed points of $\rho(\Gamma_{1,\xi})$.

\begin{lemma}\label{lem:fixedpoint} Let $\rho$ be a representation in $\O$. Then $\rho(\Gamma_{1,\xi})$ has a unique fixed point on $\H$.
\end{lemma}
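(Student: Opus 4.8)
The plan is to treat existence and uniqueness separately, exploiting the two facts recorded just above: that $\O\subset\hom_K(\Gamma_1,\SO)$, and that $\rho\in\O$ is exactly the condition $\rho\notin\hom_P(\Gamma_1,\SO)$. Existence is essentially free: since $\O\subset\hom_K(\Gamma_1,\SO)$, the image $\rho(\Gamma_{1,\xi})$ is contained in some maximal compact subgroup of $\SO$, and such a subgroup is the stabilizer of a point of $\H$; hence $\rho(\Gamma_{1,\xi})$ fixes at least one point of $\H$. All the work therefore goes into uniqueness.

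For uniqueness I would argue by contradiction, supposing that $\rho(\Gamma_{1,\xi})$ fixes two distinct points $p,q\in\H$. The key geometric step is to promote this to a fixed point at infinity. Each $g\in\rho(\Gamma_{1,\xi})$ is an isometry of $\H$ fixing both $p$ and $q$, so it preserves the unique geodesic line $\ell$ through $p$ and $q$; since $g|_\ell$ is an isometry of $\ell\cong\mathbb R$ fixing two distinct points, it must be the identity on $\ell$, and in particular $g$ fixes both ideal endpoints of $\ell$. As this holds for every $g$, the whole group $\rho(\Gamma_{1,\xi})$ fixes a common ideal point $\zeta\in\partial\H$.

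The final step, and the place where one must invoke the structure of $\SO$, is to observe that in $\mathrm{SO}(n,1)$ --- which has real rank one --- the stabilizer of an ideal point $\zeta\in\partial\H$ is precisely a minimal parabolic subgroup. Thus $\rho(\Gamma_{1,\xi})$ lies in a minimal parabolic subgroup, i.e.\ $\rho\in\hom_P(\Gamma_1,\SO)$, contradicting $\rho\in\O=\hom(\Gamma_1,\SO)-\hom_P(\Gamma_1,\SO)$. I expect the main subtlety to be this last identification together with the preceding geometric step: one has to be careful that ``two fixed points in the interior'' really forces a fixed point on $\partial\H$ (so that $\hom_P$ is the relevant class), and that in the rank-one setting ideal-point stabilizers are exactly the minimal parabolics rather than some larger parabolic. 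Once these are in hand the contradiction is immediate, and it shows that the fixed-point set of $\rho(\Gamma_{1,\xi})$ in $\H$ is a single point.
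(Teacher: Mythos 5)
Your proposal is correct and follows essentially the same argument as the paper: existence from $\O\subset\hom_K(\Gamma_1,\SO)$, and uniqueness by observing that two fixed points in $\H$ would force pointwise fixing of the geodesic through them, hence fixed ideal endpoints, placing $\rho(\Gamma_{1,\xi})$ in a minimal parabolic and contradicting $\rho\notin\hom_P(\Gamma_1,\SO)$. The paper phrases this as "$\rho(\Gamma_{1,\xi})$ cannot fix any point of $\partial\H$ by the definition of $\O$," which is exactly your identification of ideal-point stabilizers with minimal parabolics.
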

\begin{proof}
By the definition of $\mathcal O$, $\rho(\Gamma_{1,\xi})$ can not have a fixed point on $\partial \H$.
Hence it suffices to show that $\rho(\Gamma_{1,\xi})$ does not have two fixed points on $\H$.
If $\rho(\Gamma_{1,\xi})$ has two fixed points on $\H$, the geodesic connecting them is also fixed by $\rho(\Gamma_{1,\xi})$. Then $\rho(\Gamma_{1,\xi})$ must fix two ideal points on $\partial \H$ determined by the geodesic.
This contradicts the fact that $\rho(\Gamma_{1,\xi})$ does not fix any point on $\partial \H$. Therefore $\rho(\Gamma_{1,\xi})$ has a unique fixed point on $\H$.
\end{proof}

We define two kinds of paths on $\hom(\Gamma_1,\SO)$ depending on the behavior of a peripheral subgroup of $\Gamma_1$ along the path.

\begin{definition} A path on $\hom(\Gamma_1,\SO)$ is said to be a \emph{boundary-compact path} if any representation on the path sends a peripheral subgroup of $\Gamma_1$ into a maximal compact subgroup of $\SO$. A path on $\hom(\Gamma_1,\SO)$ is said to be a \emph{boundary-parabolic path} if any representation on the path sends a peripheral subgroup of $\Gamma_1$ into a minimal parabolic subgroup of $\SO$.
\end{definition}

For instance, a path on $\hom_K(\Gamma_1,\SO)$ is a boundary-compact path and a path on $\hom_P(\Gamma_1,\SO)$ is a boundary-parabolic path.

\begin{lemma}\label{lem:equipath}
Let $\rho_t : \Gamma_1 \rightarrow \SO$ be a $C^1$-smooth path parameterized by $t\in [0,1]$. If $\rho_t$ is either a boundary-compact path or boundary-parabolic path, there is a $C^1$-smooth path $\eta : [0,1] \rightarrow \cH$ such that $\eta(t)$ is a fixed point of $\rho_t(\Gamma_{1,\xi})$ for all $t\in [0,1]$. Furthermore, if $\rho_t$ is a boundary-compact path, $\eta$ has its image in $\H$ and, if $\rho_t$ is a boundary-parabolic path, $\eta$ has its image in $\partial \H$.
\end{lemma}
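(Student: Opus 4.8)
The plan is to reduce the statement to a smooth selection problem for the common fixed-point set and to solve it by the implicit function theorem on the nondegenerate part of the path, treating the degenerate locus separately by continuity. Fix a finite generating set $\gamma_1,\dots,\gamma_m$ of $\Gamma_{1,\xi}$, so that a point of $\cH$ is fixed by $\rho_t(\Gamma_{1,\xi})$ exactly when it is fixed by each $\rho_t(\gamma_j)$; write $\mathrm{Fix}_t\subseteq\cH$ for this common fixed set. As recalled before the statement, amenability of $\Gamma_{1,\xi}$ together with \cite{Mo} makes $\mathrm{Fix}_t$ nonempty for every $t$, lying in $\H$ in the boundary-compact case and containing a boundary point in the boundary-parabolic case. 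The content of the lemma is therefore to choose a single point $\eta(t)\in\mathrm{Fix}_t$ depending $C^1$ on $t$, with values in $\H$ (resp.\ in $\partial\H$).

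First I would treat the locus where the fixed point is unique. In the boundary-compact case this is precisely the open set $\{t:\rho_t\in\O\}$, where Lemma \ref{lem:fixedpoint} supplies a unique fixed point $\eta(t)\in\H$. Uniqueness forces the common kernel $\bigcap_j\ker\!\big(\mathrm{Id}-D_{\eta(t)}\rho_t(\gamma_j)\big)=T_{\eta(t)}\mathrm{Fix}_t$ to be trivial, so in local coordinates the $x$-differential of $x\mapsto(\rho_t(\gamma_j)x-x)_{j}$ is injective at $(t,\eta(t))$; the implicit function theorem then yields a local $C^1$ branch of common fixed points, and uniqueness glues these branches into a single $C^1$ map $\eta$ on all of $\{t:\rho_t\in\O\}$. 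The boundary-parabolic case is handled identically after replacing $\H$ by $\partial\H\cong S^{n-1}$ and using the identification of minimal parabolics with boundary points from the proof of Lemma \ref{lem:closedset}: wherever $\rho_t(\Gamma_{1,\xi})$ fixes a single boundary point, the same transversality and the implicit function theorem give a $C^1$ boundary path.

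Concretely, the branch $\eta$ is governed by the linear equation obtained by differentiating $\rho_t(\gamma_j)\eta(t)=\eta(t)$ in $t$, namely $(\mathrm{Id}-D_{\eta(t)}\rho_t(\gamma_j))\dot\eta(t)=(\partial_t\rho_t(\gamma_j))\,\eta(t)$ for all $j$. I would integrate this time-dependent vector field from an initial fixed point $\eta(0)\in\mathrm{Fix}_0$; the defect $\rho_t(\gamma_j)\eta(t)-\eta(t)$ has vanishing $t$-derivative along the flow by construction and vanishes at $t=0$, so a Gronwall argument keeps the integral curve inside $\mathrm{Fix}_t$. This packages the local $C^1$ statements of the previous paragraph as an honest ordinary differential equation.

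The main obstacle is the degenerate locus: the closed set of times at which $\rho_t(\Gamma_{1,\xi})$ lies in a conjugate of $M$, equivalently $\rho_t\in\hom_K(\Gamma_1,\SO)\cap\hom_P(\Gamma_1,\SO)$. There $\mathrm{Fix}_t$ jumps dimension, uniqueness fails, and the minimal-norm solution of the infinitesimal equation may be discontinuous, so the implicit function theorem does not apply directly. I expect this to be the heart of the argument. The resolution exploits that $\mathrm{Fix}_t$ is a totally geodesic, hence convex, subset of $\cH$ whose relevant part is a geodesic together with its endpoints: across such a time one selects $\eta(t)$ on this convex fixed set so as to match the $C^1$ branches arriving from the two sides, using convexity to guarantee a continuous and, after a smoothing that respects the convex constraint, $C^1$ extension (taking values in $\H$ in the boundary-compact case and in an endpoint on $\partial\H$ in the boundary-parabolic case). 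Finally, compactness of $[0,1]$ reduces the global construction to finitely many such patches, yielding the desired $C^1$ path $\eta:[0,1]\to\cH$.
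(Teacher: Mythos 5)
Your route is genuinely different from the paper's: the paper never analyzes the fixed-point sets $\mathrm{Fix}_t$ at all, but instead chooses a $C^1$ family of conjugating elements $g_t\in\SO$ (resp.\ $k_t\in\mathrm{SO}(n)$) with $\rho_t(\Gamma_{1,\xi})\subset g_t\,\mathrm{SO}(n)\,g_t^{-1}$ (resp.\ $\subset k_tPk_t^{-1}$) and simply sets $\eta(t)=g_t\,o$ (resp.\ $\eta(t)=k_tz_0$). Your proposal, however, has two gaps that I do not think can be repaired as written. The first is in the boundary-parabolic case: the transversality you invoke is false there. The implication ``unique fixed point $\Rightarrow \bigcap_j\ker\bigl(\mathrm{Id}-D_{\eta(t)}\rho_t(\gamma_j)\bigr)=0$'' is a fact about isometries of $\H$, whose fixed sets are totally geodesic with tangent space equal to that kernel; it fails for the conformal action on $\partial\H\cong S^{n-1}$, because a parabolic M\"obius transformation has a unique fixed point at which its differential is the \emph{identity} (write the parabolic as $x\mapsto x+v$ in the upper half-space model and pass to the chart $y=x/|x|^2$ at the fixed point: the map becomes $y\mapsto (y+|y|^2v)/(1+2\,v\cdot y+|v|^2|y|^2)$, whose derivative at $y=0$ is $\mathrm{Id}$). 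In particular, for the restriction of the lattice embedding $i$ itself---a group of parabolic translations, the most relevant point of $\hom_P(\Gamma_1,\SO)$---your common kernel is the whole tangent space even though the fixed point is unique, so the implicit function theorem gives nothing; likewise your linearized ODE degenerates, since its coefficient $\mathrm{Id}-D\rho_t(\gamma_j)$ is then the zero operator. Thus the method fails precisely on the boundary-parabolic locus that Theorem \ref{thm:onecuspedcase} needs.

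The second gap is the degenerate locus, and it cannot be closed by matching and smoothing one-sided branches, because those branches need not have matching limits. (Your structural claim there is also incorrect: the fixed set is a totally geodesic subspace of any dimension $0,\dots,n$---for instance all of $\cH$ when $\rho_t|_{\Gamma_{1,\xi}}$ is trivial---not a geodesic with its endpoints.) Concretely, in $n=4$ let $R_p(\theta)$ denote the elliptic isometry whose only fixed point is $p$ (rotation by $\theta$ in two orthogonal tangent $2$-planes at $p$), let $\theta(t)=e^{-1/(t-1/2)^2}$, take $p_-\neq p_+$, and send one generator of $\Gamma_{1,\xi}\cong\mathbb{Z}^3$ to $R_{p_-}(\theta(t))$ for $t<1/2$, to the identity at $t=1/2$, and to $R_{p_+}(\theta(t))$ for $t>1/2$, with all other generators going to the identity. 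Since $\theta$ vanishes to infinite order, this is a $C^\infty$ boundary-compact path of representations of $\Gamma_{1,\xi}$ whose fixed set is $\{p_-\}$ for $t<1/2$ and $\{p_+\}$ for $t>1/2$; hence \emph{no} continuous selection of fixed points exists, let alone a $C^1$ one. Such a path need not extend to the lattice $\Gamma_1$, but your argument uses nothing beyond the restriction $\rho_t|_{\Gamma_{1,\xi}}$ and its pointwise fixed sets, so if the patching step were valid it would apply verbatim to this path and produce a selection that does not exist. Whatever closes this gap must be an input about the coherence of the family $\{\rho_t\}$ as a whole; in the paper that input is exactly the asserted existence of the $C^1$ conjugating family $g_t$ (resp.\ $k_t$), which is taken as the key step rather than derived from the fixed-point sets. (The smaller defects in your interior nondegenerate case---an injective but non-invertible differential does not give persistence of zeros under the implicit function theorem, and the overdetermined ODE cannot simply be integrated---are repairable, e.g.\ by applying the implicit function theorem to $\nabla_x\sum_j d(\rho_t(\gamma_j)x,x)^2=0$ and using convexity of displacement functions to keep the unique fixed point from jumping; the two gaps above are the essential ones.)
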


\begin{proof}
Suppose that $\rho_t$ is a boundary-compact path. Since all maximal compact subgroups are conjugate to each other, we can set $\rho_t(\Gamma_{1,\xi}) \subset g_t \mathrm{SO}(n) g_t^{-1}$ for some $g_t \in \SO$. Furthermore, one may choose $g_t$ to be a $C^1$-smooth path on $\SO$ since $\rho_t$ is a $C^1$-smooth path. Let $o$ denote the point on $\H$ corresponding to the fixed point of $\mathrm{SO}(n)\subset \SO$. Then define a path $\eta :[0,1]\rightarrow \H$ by $\eta(t) = g_t o.$ Obviously, $\eta$ is a $C^1$-smooth path on $\H$ and $\eta(t)=g_t o$ is a fixed point of $\rho_t(\Gamma_{1,\xi})$. This is the desired path.

If $\rho_t$ is a boundary-parabolic path, similarly to the above, we can find a $C^1$-smooth path $k_t$ on $\mathrm{SO}(n)$ with $\rho_t(\Gamma_{1,\xi}) \subset k_t P k_t^{-1}$ for a fixed minimal parabolic subgroup $P$ of $\SO$. Let $z_0 \in \partial \H$ denote the fixed point of $P$. Then we can define a path $\eta : [0,1] \rightarrow \partial \H$ by $\eta(t) = k_t z_0$. Clearly, it is a $C^1$-smooth path and $\eta(t)$ is a fixed point of $\rho_t(\Gamma_{1,\xi})$. This completes the proof.
\end{proof}

Let $\rho_t : \Gamma_1 \rightarrow \SO$ be a $C^1$-smooth path parameterized by $t\in [0,1]$. According to Lemma \ref{lem:equimap}, there exist a triangulation $\T$ on $\widehat{\widetilde M_1}$ and a continuous, nondegenerate, $\rho_0$-equivariant map $f_0 : \widehat{\widetilde M_1} \rightarrow \cH$ with $f_0(\xi)=\eta(0)$. Then we define a one-parameter family $f_t :\widehat{\widetilde M_1} \rightarrow \cH$ as follows:
\begin{itemize}
\item[-] $f_t(v_i)=y_i$ and $f_t(\gamma v_i)=\rho_t(\gamma) y_i$,
\item[-] $f_t(\xi)=\eta(t)$ and $f_t(\gamma \xi)=\rho_t(\gamma) \eta(t)$,
\end{itemize}
for all $\gamma \in \Gamma_1$ and $i=1,\ldots,N$.
By the construction of $f_t$, it is clear that $f_t$ is $\rho_t$-equivariant and $(f_t)$ is a  one-parameter family of maps from $\widehat{\widetilde M_1}$ to $\cH$. Moreover, since any simplex of $\mathcal T$ is mapped to a nondegenerate geodesic simplex by $f_0$, any simplex of $\mathcal T$ is also mapped to a nondegenerate geodesic simplex by $f_t$ for all sufficiently small $t$.

\begin{remark}
For a one-parameter family $f_t$ as above, we get  a smooth one-parameter family $f_t(s)$ of simplices in $\cH$ for each simplex $s \in \T$.
If $s$ does not have an ideal vertex, then $f_t(s)$ is a smooth one-parameter family of simplices in $\H$. However if $s$ has an ideal vertex, then $f_t(s)$ is either a simplex or an ideal simplex depending on the image of the ideal vertex of $s$ by $f_t$. In order to apply  Schl\"{a}fli formula to $f_t(s)$, it is required that all $f_t(s)$  are either simplices or ideal simplices. Lemma \ref{lem:equipath} implies that the required condition is satisfied if $\rho_t$ is either a boundary-compact path or boundary-parabolic path.
\end{remark}

Let $\T_{\cH}$ be the collection of geodesic simplices of $\cH$ obtained by $f_0$ and $\T$. Let $F$ be a face of codimension $2$ of $\T$ and denote by $F'$ its image under $f_0$. Note that $F'$ is a face of codimension $2$ of $\T_{\cH}$. The star of $F$ contains a finite number of simplices $s_1,\ldots,s_k$ of $\T$. Denote their images by $s_1',\ldots,s_k'$.
Since the restriction map of $f_0$ to each $s_i$ is a homeomorphism onto $s_i'$ for all $i$, there are small tubular neighborhoods $\Tub (F)$ and $\Tub (F')$ of $F$ and $F'$ such that $\partial \Tub (F)$ projects onto $\partial \Tub (F')$ by $f_0$. Hence $f_0$ defines a homomorphism $$(f_0)_* : H_1( \partial \Tub(F),\mathbb Z) \rightarrow H_1( \partial \Tub(F'),\mathbb Z).$$

Noting that $\Tub (F)$ and $\Tub (F')$ are homeomorphic to $F \times D^2$, one can easily see that $H_1( \partial \Tub(F),\mathbb Z) \cong \mathbb Z$ and $H_1( \partial \Tub(F'),\mathbb Z) \cong \mathbb Z$.
Choose an orientation on $\H$, $F$ and $F'$.
Choose a generator $g_F$ of $H_1( \partial \Tub(F),\mathbb Z)$ in such a way that the orientations on $F$ and $g_F$ are compatible with the orientation on $\H$. In the same way, choose a generator $g_{F'}$ of $H_1( \partial \Tub(F'),\mathbb Z)$. Define the transverse degree of $f_0$ to $F$, denoted by $\mathrm{deg}_F f_0 \in \mathbb Z$, by $$ (f_0)_* g_F = (\mathrm{deg}_F f_0) \cdot g_{F'}.$$

The orientation on $\H$ induces an orientation on each simplex of $\T$ and $\T_{\cH}$.
For a simplex $s$ of $\T$, let $s'=f_0(s)$. We define
\begin{displaymath}
\epsilon(s')=\left\{ \begin{array}{ll} +1 & \textrm{if $f_0 : s \rightarrow s'$ is orientation-preserving} \\
-1 & \textrm{otherwise} \end{array} \right. \end{displaymath}
Then it is not difficult to see that
$$ 2\pi \mathrm{deg}_F f_0 = \pm \sum_{s / F\subset s} \epsilon(s') \theta(F',s'),$$
where $\theta(F',s')$ denotes the dihedral angle of $s'$ at $F'$ and the sum is taken over all simplices of $\T$ containing the face $F$. See \cite[Section 5]{BCG} for a detailed proof of this.

Let $F'(t)=f_t(F)$ and $s'(t)=f_t(s)$. Recall that the vertex set of a simplex $s$ of $\T$ is either $\{\gamma_{i_0} v_{i_0},\ldots,\gamma_{i_n}v_{i_n}  \}$ or $\{ \gamma_{i_0}v_{i_0},\ldots,\gamma_{i_{n-1}}v_{i_{n-1}},\gamma_{i_n}\xi\}$ for some $\gamma_{i_0},\ldots, \gamma_{i_n} \in \Gamma$.
Hence $s'(t)$ can be written as $$s'(t)=[\rho_t(\gamma_{i_0}) y_{i_0},\ldots,\rho_t(\gamma_{i_{n-1}})y_{i_{n-1}} ,\rho_t(\gamma_{i_n})z_{i_n} ]$$ for some $z_{i_n} \in \{y_1,\ldots,y_N,\eta(t) \}$.

In the case that $\rho_t$ is a boundary-compact path, $s'(t)$ is a $C^1$-smooth one-parameter family of geodesic simplices in $\H$ for all $s \in \T$ due to $\eta(t) \in \H$. In the case that $\rho_t$ is a boundary-parabolic path, $s'(t)$ is either a
$C^1$-smooth one-parameter family of geodesic simplices in $\H$ or ideal geodesic simplices with all vertices in $\H$ but one vertex in $\partial \H$.
In either case, it is possible to apply the Schl\"{a}fli formula to a $C^1$-smooth one-parameter family $s'(t)$ of (ideal) geodesic simplices for all $s \in \T$.

Since $\partial \Tub (F'(t))$ moves in a $C^1$-manner in $\cH$ and are all homeomorphic to each other,
the transverse degree of $f_t$ to $F$ does not change. Thus we have $\mathrm{deg}_F f_t = \mathrm{deg}_F f_0$. Similarly, the orientation of $f_t$ on $s$ does not change for each $s \in \T$ i.e., $\epsilon(s')=\epsilon(s'(t))$ for all $t\in [0,1]$. These facts imply that $$\frac{d (\mathrm{deg}_F f_t)}{dt}\bigg|_{t=0}= \sum_{s / F\subset s} \epsilon(s') \frac{d \theta(t;F',s')}{dt}\bigg|_{t=0}  =0,$$
where $\theta(t;F',s')$ denotes the dihedral angle of $s'(t)$ at $F'(t)$.

\begin{theorem}\label{thm:onecuspedcase} Let $n\geq 4$ and $\rho_t : \Gamma_1 \rightarrow \SO$ be a $C^1$-smooth path on $\hom(\Gamma_1,\SO)$. Then $\vol(\rho_t)$ is constant.
\end{theorem}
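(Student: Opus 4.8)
The plan is to show that $\vol(\rho_t)$ is differentiable in $t$ with vanishing derivative; since the volume function is continuous on $\hom(\Gamma_1,\SO)$ (this follows from the bounded-cohomological definition, or equivalently from the simplicial formula of Section \ref{sec:triangulation} with a continuously varying $f_t$), this forces it to be constant. I fix the triangulation $\T$ of $\widehat{\widetilde M_1}$ and the family $f_t$ built above, so that $\vol(\rho_t)=\sum_{s}\epsilon(s')\vol(s'(t))$, where $s$ ranges over the simplices of $\T$ and $s'(t)=f_t(s)$. Because the signs $\epsilon(s')$ and the transverse degrees $\deg_F f_t$ are topological and hence constant in $t$, the whole derivative is carried by the hyperbolic volumes of the moving simplices $s'(t)$, to which the Schl\"afli formula applies.

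First I would treat a boundary-compact path, where Lemma \ref{lem:equipath} supplies a $C^1$ family $\eta(t)\in\H$ and every $s'(t)$ is a $C^1$ family of honest geodesic simplices. Differentiating termwise and applying the Schl\"afli formula to each $s'(t)$ gives
\[
(1-n)\,\frac{d}{dt}\vol(\rho_t)=\sum_{s}\epsilon(s')\sum_{F\subset s}\vol_{n-2}(F'(t))\,\frac{d\theta(t;F',s')}{dt}.
\]
Reorganizing by codimension-$2$ faces $F$ and factoring out $\vol_{n-2}(F'(t))$ (well-defined, as $F'(t)$ depends only on $F$ and not on the ambient $s$) yields
\[
(1-n)\,\frac{d}{dt}\vol(\rho_t)=\sum_{F}\vol_{n-2}(F'(t))\Big(\sum_{s/F\subset s}\epsilon(s')\,\frac{d\theta(t;F',s')}{dt}\Big).
\]
Each inner sum is the derivative of $2\pi\deg_F f_t$, which vanishes by the degree identity already established. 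Hence $\frac{d}{dt}\vol(\rho_t)\equiv 0$. The boundary-parabolic case is identical, the only new feature being that some $F'(t)$ are ideal codimension-$2$ simplices; here the hypothesis $n\geq 4$ is essential, since it keeps $\vol_{n-2}(F'(t))$ finite (an edge with an ideal endpoint would have infinite length when $n=3$), so every term above still makes sense.

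For an arbitrary $C^1$ path the trouble is that $\rho_t(\Gamma_{1,\xi})$ may be of compact type for some $t$ and of parabolic type for others. I would use that $\hom_P(\Gamma_1,\SO)$ is closed (Lemma \ref{lem:closedset}), so $B=\{t:\rho_t\in\hom_P\}$ is closed and on its complement $\rho_t\in\O$, where the fixed point is unique (Lemma \ref{lem:fixedpoint}). Uniqueness lets me define a canonical map $\eta:[0,1]\to\cH$ sending $t$ to the fixed point of $\rho_t(\Gamma_{1,\xi})$; it is $C^1$ off $B$, takes values in $\partial\H$ exactly on $B$, and I expect it to extend continuously across $B$, the interior fixed point escaping to the boundary precisely as $t$ enters $B$. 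Feeding this $\eta$ into $f_t$ makes each $s'(t)$ a continuous family of geodesic simplices in the closed ball $\cH$, and $C^1$ away from $B$.

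The main obstacle is to propagate $\frac{d}{dt}\vol(\rho_t)=0$ across the transition locus $B$: on each open complementary interval the boundary-compact computation applies, but a continuous function that is merely locally constant on the open complement of a closed set $B$ need not be globally constant. My plan is to show that the Schl\"afli computation extends continuously through each transition time $t_0\in B$. As a vertex of $s'(t)$ moves to $\partial\H$, the dihedral angles $\theta(t;F',s')$ and their derivatives, together with the face volumes $\vol_{n-2}(F'(t))$, should vary continuously into their ideal limits—again using $n\geq 4$ to keep every $\vol_{n-2}(F'(t))$ finite and all Schl\"afli terms bounded—while the degree identity survives the limit because $\deg_F f_t$ is integer-valued and topological. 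Matching the one-sided limits of the boundary-compact identity from the complement of $B$ with the boundary-parabolic identity on $B$ then gives $\frac{d}{dt}\vol(\rho_t)=0$ for every $t$, whence constancy. I expect this continuous extension of the Schl\"afli derivative at transition times, rather than the termwise computation itself, to be the technical heart of the argument.
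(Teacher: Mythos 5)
Your treatment of boundary-compact and boundary-parabolic paths is exactly the paper's argument: apply Schl\"afli termwise, regroup by codimension-$2$ faces, kill each group using the constancy of the transverse degrees $\deg_F f_t$, with $n\geq 4$ guaranteeing that all face volumes $\vol_{n-2}(F'(t))$ are finite. That part is correct. The gap is in your passage to arbitrary paths. First, your canonical path $\eta$ need not extend continuously across $B=\{t\ |\ \rho_t\in\hom_P(\Gamma_1,\SO)\}$: a transition time $t_0$ can lie in $\hom_K(\Gamma_1,\SO)\cap\hom_P(\Gamma_1,\SO)$ (the paper notes this intersection is nonempty, e.g.\ image inside $M\subset P=MAN$, or a path through the trivial representation of $\Gamma_{1,\xi}$), and then the unique interior fixed points $\eta(t)$, $t\notin B$, have no reason to converge as $t\to t_0$ --- they can escape to $\partial\H$ along oscillating directions, or stay bounded, and there is no canonical value to assign at $t_0$. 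So the family $f_t$ you want to feed into the limiting Schl\"afli identity need not even be continuous at transition times.

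Second, and more fundamentally, even granting all the limits you ``expect,'' matching one-sided limits of the Schl\"afli identity only controls one-sided derivatives of $\vol(\rho_t)$ at endpoints of complementary intervals of $B$; it gives nothing at points of $B$ accumulated by $B$ from both sides. Such Cantor-like transition loci are exactly the scenario behind the counterexample you yourself raise: the Cantor function is continuous, has derivative $0$ on the open dense complement of the Cantor set, its derivative extends continuously by $0$, and yet it is not constant. Hence ``the Schl\"afli computation extends continuously through transitions'' cannot be the technical heart of the proof --- it does not yield differentiability of $\vol(\rho_t)$ on $\partial I_{\mathcal O}$. The paper bridges this differently: it splits $(0,1)$ into the open sets $I_{\mathcal O}$ and $J_{\mathcal O}=(0,1)\setminus\overline{I_{\mathcal O}}$, so that the path is boundary-compact on each interval of $I_{\mathcal O}$ and boundary-parabolic on each interval of $J_{\mathcal O}$ (no analysis at transition times is ever needed), observes that $I_{\mathcal O}\cup J_{\mathcal O}$ is open and dense, and then invokes the global $C^1$-smoothness of $t\mapsto\vol(\rho_t)$ on $[0,1]$: a continuous derivative vanishing on a dense set vanishes identically. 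Your proposal contains no substitute for this global regularity input. To repair it, you should either establish that $\vol(\rho_t)$ is $C^1$ (or at least everywhere differentiable, or absolutely continuous) in $t$, or find another mechanism ruling out Cantor-type behavior on the transition set.
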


\begin{proof}
First of all, we suppose that $\rho_t$ is either a boundary-compact or boundary-parabolic path on $\hom(\Gamma_1,\SO)$.
Let $\bar s$ denote a simplex of $\T_{\widehat M_1}$ obtained by a simplex $s \in \T$. Then
\begin{eqnarray*}
\vol(\rho_t) &=& \sum_{\bar s \in \T_{\widehat M_1}} \vol_n((f_t\circ s)(e_0),\ldots,(f_t\circ s)(e_n)) \\ &=& \sum_{\bar s \in \T_{\widehat M_1}} \epsilon(s') \vol(s'(t)) =: \sum_{\bar s \in \T_{\widehat M_1}} \epsilon(\bar s) \vol(\bar s,t) \end{eqnarray*}
where $\epsilon(\bar s)= \epsilon(s')$ and $\vol(\bar s,t)=\vol(s'(t))$. Note that $\epsilon(\bar s)$ and $\vol(\bar s,t)$ are independent of the choice of $s$.

In the $n\geq 4$ case, the $(n-2)$-dimensional volume of geodesic $(n-2)$-simplices is well-defined and actually uniformly bounded from above. Applying the Schl\"{a}fli formula to each one-parameter family $s'(t)$ in the above formula, we have
\begin{eqnarray*}
\frac{d \vol(\rho_t)}{dt} \bigg|_{t=0} &=& \sum_{\bar s \in \T_{\widehat M_1}}  \epsilon(\bar s) \frac{d \vol(\bar s, t)}{dt} \bigg|_{t=0} \\ &=&  \sum_{\bar s \in \T_{\widehat M_1}} \epsilon(\bar s)   \frac{1}{1-n}  \sum_{\bar F / \bar F \subset \bar s} \vol_{n-2}(\bar F, 0) \frac{d \theta (t;\bar F, \bar s)}{dt} \bigg|_{t=0}\\
&=& \frac{1}{1-n} \sum_{\bar F} \left( \sum_{\bar s / \bar F \subset \bar s} \epsilon(\bar s)  \frac{d \theta (t;\bar F, \bar s)}{dt}   \bigg|_{t=0} \right) \vol_{n-2}(\bar F, 0) \\
&=& 0
\end{eqnarray*}
where $\vol_{n-2}(\bar F, t)$ is the $(n-2)$-dimensional volume of $\bar F(t)$ and $\theta (t;\bar F, \bar s)=\theta(t;F',s')$. Since this computation of the derivative of $\vol(\rho_t)$ at $t=0$ works for any $t\in [0,1]$, we can conclude that the derivative of the volume of a representation on either a boundary-compact path or boundary-parabolic path vanishes.

Let $\rho_t : \Gamma_1 \rightarrow \SO$ be an arbitrary $C^1$-smooth path parameterized by $t\in[0,1]$. Then we define
$$I_{\mathcal O}= \{ t\in (0,1) \ | \ \rho_t \in \mathcal O \}.$$
Since $\mathcal O$ is an open subset of $\hom(\Gamma_1,\SO)$, $I_{\mathcal O}$ is an open subset of $(0,1)$ too, and so it is actually a countable union of disjoint open subintervals of $(0,1)$. Recalling that $\O \subset \hom_K(\Gamma_1,\SO)$, it is obvious that $\rho_t$ is a boundary-compact path on any open subinterval of $I_{\O}$.
On the other hand, we define another open subset of $(0,1)$ by $$J_{\O}=(0,1) - \overline{I_{\O}}.$$
Then $\rho_t$ is a boundary-parabolic path on any open subinterval of $J_{\O}$ because $\rho_t \in \hom_P(\Gamma_1,\SO)$ for all $t\in J_{\O}$.
This leads us to conclude that the derivative of $\vol(\rho_t)$ vanishes on the open subset $I_{\O}\cup J_{\O}$ of $(0,1)$. Furthermore since $I_{\O}\cup J_{\O}$ is an open dense subset of $(0,1)$ and $\vol(\rho_t)$ is $C^1$-smooth on $[0,1]$, the derivative of $\vol(\rho_t)$ vanishes on the whole interval $[0,1]$. Therefore, $\vol(\rho_t)$ is constant.
\end{proof}

\section{General case}\label{sec:general}

In this section, we deal with arbitrary hyperbolic manifolds of finite volume.
Let $\Gamma$ be a nonuniform lattice in $\SO$ and $M=\Gamma \backslash \H$ with $r$ ends.
For each $i=1,\ldots,r$, there is a horoball $U_{\xi_i}$ centered at $\xi_i$ such that an end $E_i$ of $M$ has a neighborhood homeomorphic to $\Gamma_{\xi_i}\backslash U_{\xi_i}$ where $\Gamma_{\xi_i}$ is the stabilizer subgroup of $\xi_i$ in $\Gamma$. By the Margulis lemma, every $\Gamma_{\xi_i}$ is almost nilpotent.

Let $\rho :\Gamma \rightarrow \SO$ be a representation. Then each $\rho(\Gamma_{\xi_i})$ is contained in either a maximal compact subgroup or minimal parabolic subgroup of $\SO$ because $\rho(\Gamma_{\xi_i})$ is amenable. Let $\hom^i_P(\Gamma,\SO)$ be the set of representations which send $\Gamma_{\xi_i}$ into a minimal parabolic subgroup.
Then using a similar argument as in the proof of Lemma \ref{lem:closedset}, it can be shown that $\hom^i_P(\Gamma,\SO)$ is closed in $\hom(\Gamma,\SO)$. Thus we define an open subset $\mathcal O_i = \hom(\Gamma,\SO)- \hom^i_P(\Gamma,\SO)$ which can be presented in the following way.
$$\mathcal O_i = \{ \rho \in \hom(\Gamma,\SO) \ | \ \rho(\Gamma_{\xi_i}) \text{ has a unique fixed point on }\H \}.$$


\begin{definition}
A path $\rho_t : \Gamma \rightarrow \SO$ is said to be a \emph{boundary-compact path with respect to an end $E_i$} if $\rho_t(\Gamma_{\xi_i})$ is contained in a maximal compact subgroup of $\SO$ for each $t$. If $\rho_t(\Gamma_{\xi_i})$ is contained in a minimal parabolic subgroup of $\SO$ for each $t$, we say that $\rho_t$ is a \emph{boundary-parabolic path with respect to an end $E_i$}
\end{definition}

Note that a path on $\hom(\Gamma,\SO)$ may be both a boundary-compact path and boundary-parabolic path with respect to $E_i$ since the intersection of a maximal compact subgroup and a minimal parabolic subgroup may be nonempty.

\begin{lemma}\label{lem:equipath2}Let $\rho_t : \Gamma \rightarrow \SO$ be a $C^1$-smooth path parameterized by $t\in [0,1]$. If $\rho_t$ is either a boundary-compact path or boundary-parabolic path with respect to $E_i$, then there is a $C^1$-smooth path $\eta_i : [0,1] \rightarrow \cH$ such that $\eta_i(t)$ is a fixed point of $\rho_t(\Gamma_{\xi_i})$ for all $t\in [0,1]$. Furthermore, if $\rho_t$ is a boundary-compact path with respect to $E_i$, $\eta_i$ has its image in $\H$ and, if $\rho_t$ is a boundary-parabolic path with respect to $E_i$, $\eta_i$ has its image in $\partial \H$.
\end{lemma}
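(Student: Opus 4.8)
The plan is to adapt the proof of Lemma~\ref{lem:equipath} essentially verbatim, since the only change is that we now track a single fixed end $E_i$ rather than the unique end of a one-cusped manifold. The key structural facts needed are exactly the ones established for $\SO$ in the one-cusped setting: all maximal compact subgroups are conjugate, all minimal parabolic subgroups are conjugate, and conjugation can be effected by elements of the compact group $\mathrm{SO}(n)$ via the Cartan decomposition. None of these facts depend on the manifold having only one cusp, so they carry over without modification.

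First I would treat the boundary-compact case. Since $\rho_t$ is a boundary-compact path with respect to $E_i$, every $\rho_t(\Gamma_{\xi_i})$ lies in some maximal compact subgroup, and because maximal compact subgroups are all conjugate we may write $\rho_t(\Gamma_{\xi_i}) \subset g_t \,\mathrm{SO}(n)\, g_t^{-1}$. The crucial point is that $g_t$ can be chosen to depend $C^1$-smoothly on $t$: this follows because $\rho_t$ is $C^1$-smooth and the assignment of a maximal compact subgroup (equivalently, a point of $\H$ fixed by it) varies smoothly, so one obtains a $C^1$ lift $g_t \in \SO$. Letting $o$ be the point of $\H$ fixed by $\mathrm{SO}(n)$, I set $\eta_i(t) = g_t\, o$. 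Then $\eta_i$ is a $C^1$-smooth path into $\H$, and $\eta_i(t)$ is fixed by $\rho_t(\Gamma_{\xi_i})$ because $g_t\,\mathrm{SO}(n)\,g_t^{-1}$ fixes $g_t\, o$.

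For the boundary-parabolic case the argument is parallel. Using that all minimal parabolic subgroups are conjugate by elements of $\mathrm{SO}(n)$ (as recorded in the proof of Lemma~\ref{lem:closedset} via $\SO/P \cong S^{n-1}$), I write $\rho_t(\Gamma_{\xi_i}) \subset k_t P k_t^{-1}$ for a fixed minimal parabolic $P$ with a $C^1$-smooth path $k_t \in \mathrm{SO}(n)$. Taking $z_0 \in \partial\H$ to be the ideal point fixed by $P$, I define $\eta_i(t) = k_t\, z_0$, which is a $C^1$-smooth path into $\partial\H$ fixed by $\rho_t(\Gamma_{\xi_i})$.

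The main obstacle, as in Lemma~\ref{lem:equipath}, is verifying that the conjugating path $g_t$ (respectively $k_t$) can genuinely be chosen $C^1$-smoothly rather than merely continuously. This is a smooth-lifting issue: one must lift the $C^1$ path of fixed points (a point of $\H$ in the compact case, a point of $\partial\H \cong S^{n-1}$ in the parabolic case) through the smooth transitive $\SO$-action to a $C^1$ path in the group. Since $\SO \to \H$ and $\SO \to \SO/P \cong S^{n-1}$ are smooth fiber bundles admitting local $C^1$ sections, such a lift exists locally and can be patched, exactly as invoked in the one-cusped proof. Granting this, the remainder is immediate, so the lemma follows by applying the one-cusped argument to the single end $E_i$.
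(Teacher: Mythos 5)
Your proposal is correct and follows essentially the same route as the paper: the paper's own proof of this lemma is literally the one-line remark that the argument of Lemma~\ref{lem:equipath} applies verbatim to the single end $E_i$, which is precisely what you carry out (conjugating into a fixed $\mathrm{SO}(n)$ or a fixed minimal parabolic $P$ by a $C^1$ path and pushing the model fixed point $o$ or $z_0$ along it). Your added remark on the smooth-lifting issue through the bundles $\SO \rightarrow \H$ and $\SO \rightarrow \SO/P$ is, if anything, slightly more careful than the paper, which simply asserts that $g_t$ (resp.\ $k_t$) may be chosen $C^1$-smoothly.
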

\begin{proof}  The lemma follows using the same methods as those used in the proof of Lemma \ref{lem:equipath}.
\end{proof}

For each $i=1,\ldots,r$, we define two open subsets $I_{\O_i}$ and $J_{\O_i}$ of $(0,1)$ by
$$I_{\O_i}=\{ t \in (0,1) \ | \ \rho_t \in \O_i \} \text{ and } J_{\O_i}=(0,1)-\overline{I_{\O_i}}.$$
Then it can be easily seen that $V_i=I_{\O_i} \cup J_{\O_i}$ is an open dense subset of $(0,1)$. Obviously $I_{\O_i} \subset \hom_K(\Gamma,\SO)$ and $J_{\O_i}\subset \hom_P(\Gamma,\SO)$. Hence any path defined on a subinterval of $V_i$ is either a boundary-compact path or boundary-parabolic path with respect to $E_i$. Let $V=\cap_{i=1}^r V_i$. Then $V$ is also an open dense subset of $(0,1)$ since $(0,1)$ is a Baire space. Recalling that any open subset of $\mathbb R$ is a countable union of disjoint open intervals, $V$ is a countable union of disjoint open intervals. Let $V=\cup_{k=1}^\infty  I_k$ be the union of disjoint open intervals. A path defined on $I_k$ is either a boundary-compact path or boundary-parabolic path with respect to $E_i$ for all $i=1,\ldots,r$.


\begin{proposition}\label{typepre}
Let $n\geq 4$ and $\rho_t : \Gamma \rightarrow \SO$ be a $C^1$-smooth path.
Assume that for all $i=1,\ldots,r$, the path $\rho_t$ is either a boundary-compact path or  a boundary-parabolic path with respect to $E_i$. Then $\vol(\rho_t)$ is constant.
\end{proposition}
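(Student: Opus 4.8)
The plan is to carry out the multi-cusped analogue of the proof of Theorem~\ref{thm:onecuspedcase}, replacing the single fixed-point path by one such path at each cusp. Under the standing hypothesis, for every end $E_i$ the path $\rho_t$ is either a boundary-compact or a boundary-parabolic path with respect to $E_i$, so Lemma~\ref{lem:equipath2} supplies a $C^1$-smooth path $\eta_i : [0,1] \to \cH$ of fixed points of $\rho_t(\Gamma_{\xi_i})$ whose image lies entirely in $\H$ in the compact case and entirely in $\partial\H$ in the parabolic case. The crucial feature, which is exactly what the hypothesis guarantees, is that each $\eta_i$ never migrates between the interior and the boundary; this is what keeps the ``ideal type'' of every simplex frozen along the deformation.

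First I would invoke Lemma~\ref{lem:equimap} to fix a $\Gamma$-equivariant triangulation $\T$ of $\widehat{\widetilde M}$ together with a nondegenerate, $\rho_0$-equivariant map $f_0 : \widehat{\widetilde M} \to \cH$ with $f_0(\xi_i) = \eta_i(0)$ for each $i$, sending every simplex of $\T$ to a nondegenerate geodesic (ideal) simplex. I would then define the one-parameter family $f_t$ on vertices by $f_t(\gamma v_j) = \rho_t(\gamma) y_j$ and $f_t(\gamma\xi_i) = \rho_t(\gamma)\eta_i(t)$, extended $\rho_t$-equivariantly via Lemma~\ref{uniquemap}. Because each $\eta_i(t)$ stays in $\H$ or stays in $\partial\H$, for every simplex $s\in\T$ the family $s'(t) = f_t(s)$ is a $C^1$-smooth one-parameter family of geodesic simplices that are either all finite or all ideal with a single ideal vertex. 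Hence the Schl\"{a}fli formula applies to each $s'(t)$; this is also where $n\geq 4$ enters, since it makes the $(n-2)$-volumes $\vol_{n-2}(\bar F, t)$ of the codimension-$2$ faces finite and uniformly bounded even in the ideal case.

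Next I would run the same transverse-degree computation as in Section~\ref{sec:onecusp}. Writing $\vol(\rho_t) = \sum_{\bar s} \epsilon(\bar s)\,\vol(\bar s, t)$ and differentiating term by term via Schl\"{a}fli, I would regroup the resulting sum by codimension-$2$ faces $\bar F$ to obtain
$$\frac{d\vol(\rho_t)}{dt}\bigg|_{t=0} = \frac{1}{1-n}\sum_{\bar F}\left(\sum_{\bar s / \bar F\subset \bar s}\epsilon(\bar s)\,\frac{d\theta(t;\bar F,\bar s)}{dt}\bigg|_{t=0}\right)\vol_{n-2}(\bar F, 0).$$
The inner sum is, up to sign, the derivative of $2\pi\,\mathrm{deg}_F f_t$. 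Since the boundaries $\partial\Tub(F'(t))$ move in a $C^1$-manner and all simplices remain nondegenerate for small $t$, the transverse degree $\mathrm{deg}_F f_t$ does not change with $t$, so each inner sum vanishes and the derivative is zero. As in Theorem~\ref{thm:onecuspedcase}, the computation at $t=0$ works verbatim at every $t\in[0,1]$, giving that $\vol(\rho_t)$ is constant.

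The main obstacle is essentially bookkeeping: seeing that the single-cusp argument survives intact when different ends are of different types simultaneously. The hypothesis decouples the cusps, since Lemma~\ref{lem:equipath2} is applied to each $E_i$ independently, so the fact that, say, $E_1$ is boundary-compact while $E_2$ is boundary-parabolic causes no interference; each $\eta_i$ is handled separately and the Schl\"{a}fli and transverse-degree argument is local to each codimension-$2$ face. The one genuinely delicate point is confirming that the constancy of $\mathrm{deg}_F f_t$ persists when $F$ has an ideal vertex, which is exactly the situation addressed by the remark in Section~\ref{sec:onecusp} that $\partial\Tub(F'(t))$ stays homeomorphic throughout, using once more that the ideal type is frozen by Lemma~\ref{lem:equipath2}.
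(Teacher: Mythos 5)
Your proposal is correct and takes essentially the same approach as the paper: the paper likewise applies Lemma~\ref{lem:equipath2} cusp-by-cusp to get the paths $\eta_i$ frozen in either $\H$ or $\partial\H$, defines $f_t$ on vertices exactly as you do, and then observes that the Schl\"{a}fli and transverse-degree computation of Theorem~\ref{thm:onecuspedcase} carries over unchanged. The only difference is presentational: the paper compresses the final step into the remark that ``every argument in the proof of Theorem~\ref{thm:onecuspedcase} works equally well,'' whereas you spell out that computation, including the regrouping over codimension-$2$ faces and the constancy of the transverse degree.
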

\begin{proof}
We may assume that $\rho_t$ is parameterized by $t\in [0,1]$. It follows from Lemma \ref{lem:equimap} that there is a triangulation $\T$ of $\widehat{\widetilde M}$ and a continuous nondegenerate $\rho_0$-equivariant map $f_0 :\widehat{\widetilde M} \rightarrow \cH$. We use the same notation as in Lemma \ref{lem:equimap}.
By Lemma \ref{lem:equipath2}, there exists a $C^1$-smooth path $\eta_i :[0,1] \rightarrow \cH$ such that $\eta_i(t)$ is a fixed point of $\rho_t(\Gamma_{\xi_i})$ for all $i=1,\ldots,r$. Here note that the image of $\eta_i$ is contained in either $\H$ or $\partial \H$. Then as in the case of a one cusped hyperbolic manifold, we can construct a one-parameter family $f_t :\widehat{\widetilde M} \rightarrow \cH$ satisfying
\begin{itemize}
\item[-] $f_t(v_i)=y_i$ and $f_t(\gamma v_i) = \rho_t(\gamma) y_i$,
\item[-] $f_t(\xi_j)=\eta_j(t)$ and $f_t(\gamma \xi_j)=\rho_t(\gamma) \eta_j(t)$,
\end{itemize}
for all $i=1,\ldots,N$, $j=1,\ldots,r$ and $\gamma \in \Gamma$.

It can be easily seen that $f_t$ is a continuous, nondegenerate, $\rho_t$-equivariant map. Due to the fact that the path $\eta_i$ is a path on either $\H$ or $\partial \H$ for all $i$, the Schl\"{a}fli formula can be applied to a $C^1$-smooth one-parameter family $f_t(s)$ of nondegenerate geodesic simplices like the one-cusped hyperbolic manifolds case for all $s\in \T$. Hence every argument in the proof of Theorem \ref{thm:onecuspedcase} works equally well. Therefore the derivative of $\vol(\rho_t)$ vanishes on $[0,1]$, which implies the proposition.
\end{proof}


Notice that the subpath $\rho_t|_{I_k}$ of $\rho_t$ satisfies the conditions of Proposition \ref{typepre} for all $k \in \mathbb N$. Furthermore, noting that $V=\cup_{k=1}^\infty I_k$ is an open dense subset of $(0,1)$, we obtain the following theorem.

\begin{theorem}\label{pathconstant}
Let $n\geq 4$ and $\Gamma$ be a nonuniform lattice in $\SO$. Let $\rho_t : \Gamma \rightarrow \SO$ be a $C^1$-smooth path on $\hom(\Gamma,\SO)$. Then $\vol(\rho_t)$ is constant.
\end{theorem}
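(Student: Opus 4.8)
The plan is to reduce the general case to Proposition~\ref{typepre} by partitioning the parameter interval into pieces on which the hypotheses of that proposition are satisfied, and then using a density-and-continuity argument to extend the conclusion to the whole interval. The key structural fact already assembled in the excerpt is that for each end $E_i$ the set $\hom^i_P(\Gamma,\SO)$ is closed, so its complement $\O_i$ is open, and consequently $I_{\O_i}=\{t\in(0,1)\mid \rho_t\in\O_i\}$ is open while $J_{\O_i}=(0,1)-\overline{I_{\O_i}}$ is open by construction. On $I_{\O_i}$ the representation sends $\Gamma_{\xi_i}$ into a maximal compact subgroup (since $\O_i\subset\hom_K(\Gamma,\SO)$), and on $J_{\O_i}$ it sends $\Gamma_{\xi_i}$ into a minimal parabolic subgroup (since $J_{\O_i}\subset\hom_P(\Gamma,\SO)$); hence any subpath lying in a connected component of $V_i=I_{\O_i}\cup J_{\O_i}$ is either boundary-compact or boundary-parabolic with respect to $E_i$.

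First I would observe that each $V_i$ is open and dense in $(0,1)$: density holds because $I_{\O_i}$ is open and $J_{\O_i}=(0,1)-\overline{I_{\O_i}}$ fills the complement of the closure, so $\overline{V_i}\supset \overline{I_{\O_i}}\cup \overline{J_{\O_i}}=(0,1)$. Then I would set $V=\bigcap_{i=1}^r V_i$ and invoke the Baire category theorem (as already noted, $(0,1)$ is a Baire space) to conclude $V$ is open and dense. Writing $V=\bigcup_{k=1}^\infty I_k$ as a countable disjoint union of open intervals, every subpath $\rho_t|_{I_k}$ is simultaneously boundary-compact or boundary-parabolic with respect to \emph{every} end $E_i$, which is precisely the hypothesis of Proposition~\ref{typepre}. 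Applying that proposition on each $I_k$ shows that $\vol(\rho_t)$ is locally constant, hence has vanishing derivative, on all of $V$.

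To finish, I would upgrade "derivative vanishes on the dense open set $V$" to "$\vol(\rho_t)$ is constant on $[0,1]$." Since $\vol(\rho_t)$ is $C^1$ in $t$ (this smoothness comes from the explicit formula for $\vol(\rho_t)$ as a finite sum of simplex volumes $\vol_n$ evaluated at images of the equivariant map $f_t$, which depend $C^1$-smoothly on $t$ through $\rho_t$ and the smooth fixed-point paths $\eta_i$), its derivative is a continuous function on $[0,1]$. A continuous function vanishing on a dense subset vanishes identically, so $\tfrac{d}{dt}\vol(\rho_t)\equiv 0$ and $\vol(\rho_t)$ is constant.

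The main obstacle, and the step deserving the most care, is verifying that $\vol(\rho_t)$ is genuinely $C^1$ on the full interval $[0,1]$ rather than merely on $V$: the concern is the behavior of the path at transition parameters in $(0,1)-V$, where the image of an ideal vertex under $f_t$ may pass between the interior $\H$ and the boundary $\partial\H$, so that a simplex $f_t(s)$ switches between a finite and an ideal geodesic simplex. One must ensure that $\vol_n$, extended to $\overline{\vol}_n$ on $(\cH)^{n+1}$, remains continuous (indeed $C^1$) across such transitions; this is exactly where the uniform boundedness of $\overline{\vol}_n$ on $(\cH)^{n+1}$, established in Section~\ref{sec:volume}, is essential. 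Given that continuity, the density argument is routine, and the theorem follows.
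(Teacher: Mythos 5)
Your proposal follows essentially the same route as the paper's own proof: the same decomposition $V=\bigcap_{i=1}^r\left(I_{\O_i}\cup J_{\O_i}\right)$ into an open dense countable union of intervals, the same application of Proposition~\ref{typepre} to each component interval $I_k$, and the same final step deducing constancy from the $C^1$-smoothness of $\vol(\rho_t)$ together with density of $V$. The $C^1$-smoothness across transition parameters that you flag as the main obstacle is asserted in the paper without further justification as well, so your argument matches the paper's proof both in structure and in what it leaves implicit.
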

\begin{proof}
The subpath $\rho_t|_{I_k}$ of $\rho_t$ is a $C^1$-smooth path satisfying the conditions of Proposition \ref{typepre} as mentioned above for all $k \in \mathbb N$. According to Proposition \ref{typepre}, the derivative of $\vol(\rho_t)$ vanishes on $I_k$ for all $k$. In other words, on $V$, $$\frac{d \vol(\rho_t)}{dt}=0.$$
Since $\vol(\rho_t)$ is a $C^1$-function on $[0,1]$ and $V$ is an open dense subset of $[0,1]$, the derivative of $\vol(\rho_t)$ vanishes on the whole interval $[0,1]$. We therefore conclude that $\vol(\rho_t)$ is constant.
\end{proof}

Theorem \ref{pathconstant} implies that the volume of a representation takes the same value on a connected component of $\hom(\Gamma,\SO)$. Hence we immediately have the following corollary.

\begin{corollary}\label{compconstant}
Let $n\geq 4$ and $\Gamma$ be a nonuniform lattice in $\SO$. The volume of a representation is constant on each connected component of $\hom(\Gamma,\SO)$. Thus the volume of a representation takes a finite number of values.
\end{corollary}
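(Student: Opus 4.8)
The plan is to deduce Corollary \ref{compconstant} directly from Theorem \ref{pathconstant}, so the only real content is a connectedness/path-lifting argument together with a finiteness statement coming from the algebraic structure of $\hom(\Gamma,\SO)$. First I would recall that $\Gamma$ is finitely generated (being a lattice in $\SO$), say by $\gamma_1,\ldots,\gamma_m$. This realizes $\hom(\Gamma,\SO)$ as a real algebraic subset of $\SO^m$, cut out by the finitely many polynomial relations among the generators; in particular it is a real algebraic variety. The immediate consequence I want is that $\hom(\Gamma,\SO)$ has only \emph{finitely many} connected components, and moreover each connected component is path-connected. The finiteness of components is a standard fact for real algebraic sets (they are semialgebraic, hence have finitely many connected components, each of which is itself semialgebraic and path-connected).

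Granting path-connectedness of components, the core step is to upgrade a continuous path between two representations in the same component to a $C^1$-smooth path, so that Theorem \ref{pathconstant} applies. Here I would argue as follows: let $\rho$ and $\sigma$ lie in the same connected component $C$ of $\hom(\Gamma,\SO)$. Since $C$ is path-connected there is a continuous path from $\rho$ to $\sigma$ in $C$. Because $C$ is a semialgebraic set, one can in fact connect $\rho$ and $\sigma$ by a \emph{piecewise-real-analytic} (in particular piecewise-$C^1$) path lying entirely in $C$; this is the standard curve-selection/triangulation property of semialgebraic sets. Applying Theorem \ref{pathconstant} to each smooth piece of this path shows $\vol$ is constant along each piece, and since $\vol$ is continuous the values must agree at the junction points. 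Hence $\vol(\rho)=\vol(\sigma)$, proving that $\vol$ is constant on $C$.

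The finiteness of the set of values is then immediate: there are only finitely many connected components $C_1,\ldots,C_p$, and $\vol$ takes a single value on each $C_j$, so the set $\{\vol(\rho) : \rho\in\hom(\Gamma,\SO)\}$ has at most $p$ elements.

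I expect the main obstacle to be entirely in the passage from ``path-connected'' to ``connected by a $C^1$-smooth path,'' since Theorem \ref{pathconstant} is stated only for $C^1$-smooth paths and a priori two points in a connected component of a real algebraic variety need only be joined by a \emph{continuous} path. The clean way around this is the semialgebraic structure: one invokes that $\hom(\Gamma,\SO)$ is semialgebraic (Tarski--Seidenberg), that semialgebraic sets have finitely many connected components each admitting a semialgebraic triangulation, and hence that any two points of a component are joined by a piecewise-smooth semialgebraic arc. Everything else — constancy on each smooth piece and gluing via continuity of $\vol$ — is routine once this structural input is in place.
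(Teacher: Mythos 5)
Your proposal is correct and follows essentially the same route as the paper: the paper deduces the corollary immediately from Theorem \ref{pathconstant}, together with the (implicit) fact that $\hom(\Gamma,\SO)$ is a real algebraic variety, hence has finitely many connected components, each path-connected. Your semialgebraic smoothing-and-gluing argument just makes explicit what the paper leaves unstated; the only refinement worth noting is that at the junction points you could avoid invoking continuity of $\vol$ altogether by reparametrizing each analytic piece so that its derivative vanishes at the endpoints, producing a single genuinely $C^1$ path to which Theorem \ref{pathconstant} applies directly.
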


As an application, we give another proof of the local rigidity theorem by combining Theorem \ref{pathconstant} and the volume rigidity theorem proved by Bucher--Burger--Iozzi in \cite{BBI}. In fact, we prove that a nonuniform lattice in $\SO$ can not be nontrivially deformed in $\hom(\Gamma,\SO)$.

\begin{corollary}\label{cor:globalrigidity}
Let $n\geq 4$ and $\Gamma$ be a nonuniform lattice in $\SO$. Then the connected component of $\mathrm{Hom}(\Gamma,\SO)$ containing a lattice embedding $i :\Gamma \hookrightarrow \SO$ consists of representations conjugate to $i$ by isometries.
\end{corollary}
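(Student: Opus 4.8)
The plan is to deduce Corollary~\ref{cor:globalrigidity} directly from Theorem~\ref{pathconstant} together with the volume rigidity theorem of Bucher--Burger--Iozzi (Theorem~\ref{BBI}). Let $C$ denote the connected component of $\hom(\Gamma,\SO)$ containing the lattice embedding $i:\Gamma\hookrightarrow\SO$. I would first invoke Corollary~\ref{compconstant}, which is the component-level restatement of Theorem~\ref{pathconstant}: since $\vol(\rho_t)$ is constant along every $C^1$-smooth path, the volume is locally constant on $\hom(\Gamma,\SO)$ and hence constant on each connected component. Therefore every representation $\rho$ lying in the same component $C$ as $i$ satisfies $\vol(\rho)=\vol(i)=\vol(M)$.

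Next I would feed this equality into the rigidity half of Theorem~\ref{BBI}. For $n\geq 4$ we have $n\geq 3$, so the theorem applies: the Milnor--Wood type inequality $|\vol(\rho)|\leq\vol(M)$ holds, and equality $|\vol(\rho)|=\vol(M)$ forces $\rho$ to be conjugate to $i$ by an isometry. Since we have just shown $\vol(\rho)=\vol(M)$ for every $\rho\in C$, the equality case of Theorem~\ref{BBI} immediately yields that each such $\rho$ is conjugate to $i$ by an element of $\SO$ (acting by isometries on $\H$). This establishes that $C$ consists entirely of representations conjugate to $i$ by isometries, which is exactly the assertion of the corollary.

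The one point that needs a word of care is matching the sign conventions so that $\vol(\rho)=+\vol(M)$ rather than $-\vol(M)$; this is handled by fixing the orientation so that $\vol(i)=\vol(M)>0$, after which path-connectedness to $i$ and continuity of $\vol$ guarantee the correct sign is preserved. I would also note that no genuinely new argument is required here: the substance has already been expended in proving Theorem~\ref{pathconstant} (the delicate control of the parabolic subgroups along the path via the boundary-compact/boundary-parabolic dichotomy and the Schl\"afli formula) and in the cited Theorem~\ref{BBI}. Thus the main obstacle is not in this corollary at all but upstream, in Theorem~\ref{pathconstant}; the corollary itself is a short two-step combination. The proof can therefore be presented in essentially two sentences: constancy of volume on $C$ from Corollary~\ref{compconstant}, then the equality case of Theorem~\ref{BBI}.
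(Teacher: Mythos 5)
Your proposal is correct and is essentially identical to the paper's own proof: the paper likewise applies Corollary~\ref{compconstant} to get $\vol(\rho)=\vol(i)$ on the component containing $i$, and then invokes the equality case of Theorem~\ref{BBI} to conclude that $\rho$ is conjugate to $i$ by an isometry. Your additional remark about sign conventions is a harmless refinement that the paper leaves implicit.
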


\begin{proof}
Let $\Gamma$ be a nonuniform lattice in $\SO$ and $i : \Gamma \hookrightarrow \SO$ be a lattice embedding. By Corollary \ref{compconstant}, $\vol(\rho)=\vol(i)$ for any representation $\rho$ in the connected component of $\hom(\Gamma,\SO)$ containing $i : \Gamma \hookrightarrow \SO$. Theorem \ref{BBI} implies that $\rho$ is conjugate to $i$ by an isometry. Therefore the claim follows.
\end{proof}

Note that Corollary \ref{cor:globalrigidity} implies the local rigidity theorem for nonuniform lattices in dimension greater than or equal to $4$.

\section{$3$-dimensional case}\label{sec:3dimension}

In this section, we will deal with the $3$-dimensional case. The main difference from the case of dimension at least $4$ is the absence of Proposition \ref{typepre}. This is originally due to the Schl\"{a}fli formula for ideal simplices in the $3$-dimensional case.

Let $[v_0,v_1,v_2,v_3]$ and $[v_0',v_1',v_2',v_3']$ be geodesic simplices in $\overline{\mathbb H}^3$. We say that the two geodesic simplices are \emph{of the same type} if both $v_i$ and $v_i'$ are either in $\mathbb H^3$ or $\partial \mathbb H^3$ for each $i=0,1,2,3$. Let $S_t=[v^t_0,v_1^t,v_2^t,v^t_3]$ be a smooth one-parameter family of ideal geodesic simplices of the same type in $\overline{\mathbb H}^3$. Choose small horoballs centered at the ideal vertices of $S_0$ to be disjoint and then, truncate $S_0$ by cutting off the horoballs. Note that any edge $e$ with infinite length becomes an edge with finite length by cutting off the horoballs. Let $l(e)$ be the length of the truncated edge. Then the first order variation of the volume of $S_t$ at $t=0$ is \begin{eqnarray}\label{eqn:3formula} \frac{d \vol (S_t)}{dt} \bigg|_{t=0} = -\frac{1}{2} \sum_{e} l(e) \frac{d \theta_e(t)}{dt}\bigg|_{t=0},\end{eqnarray}
where the sum is over all edges of $S_0$ and $\theta_e$ denotes the dihedral angle at $e$.

To deal with the representation variety of a more general $3$-manifold in $\PSL$, we recall the definition of cusped manifolds.
\begin{definition}
An orientable manifold $M$ is called a \emph{cusped manifold} if it is diffeomorphic to the interior of a compact manifold with boundary $\oM$. A cusp of $M$ is a closed regular neighborhood of a component of $\partial \oM$. In the following, $M$ is required to have dimension $3$ and $\partial \oM$ to be a union of tori, so each cusp is homeomorphic to $T \times [0,\infty)$ where $T$ denotes a torus.
\end{definition}

Each cusp of $M$ defines a subgroup of $\pi_1(M)$, which is well-defined up to conjugacy. These are called the \emph{peripheral subgroups} of $M$. Since $\pi_1(\partial \oM)$ is abelian, one can define the volume $\vol(\rho)$ of a representation $\rho :\pi_1(M) \rightarrow \PSL$ as usual. Noting that every $3$-manifold is triangulable, every argument in Section \ref{sec:volume} works in this setting. Hence from now on we stick to the notations used in Section \ref{sec:volume}.

In contrast to the case of dimension greater than or equal to $4$, the volume of a representation in the $3$-dimensional case takes an infinite number of values. In fact, the set of values contains an open interval: let $M=\Gamma\backslash \mathbb H^3$ be a noncompact hyperbolic $3$-manifold of finite volume and let $\hom_0(\pi_1(M),\PSL)$ be the connected component of $\hom(\pi_1(M),\PSL)$ containing the holonomy of a hyperbolic structure on $M$. Then there is a representation $\rho'$ in $\hom_0(\pi_1(M),\PSL)$ which factors through the fundamental group of a Dehn filling of $M$ such that $$\vol(i)=\vol(M) > \vol(\rho').$$
Since the volume of a representation is continuous on $\hom(\pi_1(M),\PSL)$, the set of values for the volume of a representation contains an open interval $(\vol(\rho'),\vol(i))$.

However, there is a subvariety of $\hom(\pi_1(M),\PSL)$ such that the volume of a representation is constant on each connected component of the subvariety. See Corollary \ref{compconstant3}. This subvariety is closely related to the peripheral subgroups of $M$.

\begin{definition} A representation $\rho : \pi_1(M) \rightarrow \PSL$ is called \emph{boundary-parabolic} if $\rho$ sends each peripheral subgroup of $M$ to a parabolic subgroup. \end{definition}

Let $\hom_{par}(\pi_1(M),\PSL)$ be the set of boundary-parabolic representations. It is actually an algebraic subvariety of $\hom(\pi_1(M),\PSL)$. Thus $\hom_{par}(\pi_1(M),\PSL)$ has finitely many connected components. The following theorem is analogous to Theorem \ref{pathconstant} for $\hom_{par}(\pi_1(M),\PSL)$:

\begin{theorem}\label{pathconstant3}
Let $\Gamma$ be a nonuniform lattice in $\mathrm{SO}(3,1)$ and $\rho_t : \Gamma \rightarrow \mathrm{SO}(3,1)$ a $C^1$-smooth path on $\mathrm{Hom}_{par}(\Gamma, \mathrm{SO}(3,1))$. Then $\vol(\rho_t)$ is constant.
\end{theorem}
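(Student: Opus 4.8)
The plan is to imitate the proof of Proposition~\ref{typepre}, replacing the higher-dimensional Schl\"afli formula by its $3$-dimensional counterpart $(\ref{eqn:3formula})$. The decisive observation is that a path in $\hom_{par}(\Gamma,\PSL)$ is automatically a boundary-parabolic path with respect to \emph{every} end $E_i$: by definition each $\rho_t$ sends each peripheral subgroup $\Gamma_{\xi_i}$ into a parabolic subgroup, so no case splitting between the compact and parabolic regimes (and hence no Baire-category argument as in Theorem~\ref{pathconstant}) is needed. First I would apply Lemma~\ref{lem:equipath2} to obtain, for each $i=1,\dots,r$, a $C^1$-smooth path $\eta_i:[0,1]\to\partial\mathbb H^3$ with $\eta_i(t)$ a fixed point of $\rho_t(\Gamma_{\xi_i})$, and then build the one-parameter family of nondegenerate $\rho_t$-equivariant maps $f_t:\widehat{\widetilde M}\to\overline{\mathbb H}^3$ exactly as in Proposition~\ref{typepre}, setting $f_t(\xi_j)=\eta_j(t)$. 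Because every $\eta_j(t)$ stays on $\partial\mathbb H^3$, each simplex $s'(t)=f_t(s)$ is, throughout the whole path, a geodesic simplex of fixed type in the sense of Section~\ref{sec:3dimension}: the coned simplices carry exactly one ideal vertex and it remains ideal for all $t$. This is precisely the hypothesis under which $(\ref{eqn:3formula})$ is valid, and it is where the restriction to $\hom_{par}$ is essential.

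With $f_t$ in hand, write $\vol(\rho_t)=\sum_{\bar s}\epsilon(\bar s)\vol(\bar s,t)$ as before. To differentiate I first fix, $\Gamma$-equivariantly, one horoball at each ideal point of the $f_0$-image configuration, and measure every truncated edge length $l(\bar e)$ with respect to this single family of horoballs. Applying $(\ref{eqn:3formula})$ to each $s'(t)$ and interchanging the order of summation gives
$$\frac{d\vol(\rho_t)}{dt}\bigg|_{t=0}=-\frac12\sum_{\bar e} l(\bar e)\left(\sum_{\bar s/\bar e\subset\bar s}\epsilon(\bar s)\frac{d\theta(t;\bar e,\bar s)}{dt}\bigg|_{t=0}\right),$$
the sum running over the $1$-dimensional (codimension $2$) faces $\bar e$ of $\T_{\widehat M}$. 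The inner sum is exactly the quantity whose vanishing was established via the transverse degree $\mathrm{deg}_{\bar e}f_t$ in Section~\ref{sec:onecusp}: since the family $s'(t)$ keeps its type and the boundary of a tubular neighborhood of $\bar e$ varies in a $C^1$-manner, $\mathrm{deg}_{\bar e}f_t$ is locally constant, so $\sum_{\bar s/\bar e\subset\bar s}\epsilon(\bar s)\tfrac{d\theta}{dt}=\pm 2\pi\,\tfrac{d(\mathrm{deg}_{\bar e}f_t)}{dt}=0$. Hence the derivative of $\vol(\rho_t)$ is zero at $t=0$, and as $t=0$ is arbitrary, $\vol(\rho_t)$ is constant.

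I expect the main obstacle to be the legitimacy of pulling the truncated edge lengths $l(\bar e)$ out of the sum over the simplices sharing $\bar e$, since $(\ref{eqn:3formula})$ is stated for a single simplex and its edge lengths depend a priori on the chosen horoballs. The resolution rests on two points that I would verify carefully. First, the coned part of $\T$ produces ideal simplices with at most one ideal vertex, so every edge has at most one ideal endpoint; a single equivariant horoball choice at each ideal point then assigns to each $\bar e$ an unambiguous finite length shared by all simplices around it. Second, $(\ref{eqn:3formula})$ is independent of the horoball choice: moving the horoball at an ideal vertex $v$ inward by $\delta$ shortens every incident edge by $\delta$ and changes the right-hand side by $-\tfrac{\delta}{2}\sum_{e\ni v}\tfrac{d\theta_e}{dt}$, which vanishes because the link of $v$ is a Euclidean triangle whose angle sum $\sum_{e\ni v}\theta_e=\pi$ is constant in $t$. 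These two facts make the interchange of summation and the final cancellation rigorous, so that the whole scheme of Proposition~\ref{typepre} carries over to dimension $3$ on $\hom_{par}(\Gamma,\PSL)$.
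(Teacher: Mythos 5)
Your proposal is correct and takes essentially the same route as the paper's proof: the same paths $\eta_i$ in $\partial\mathbb H^3$ and family $f_t$, the same truncated $3$-dimensional Schl\"afli formula with summation interchanged over edges, and the same transverse-degree cancellation, with parabolicity entering exactly where the paper uses it (fixed simplex type and the existence of $\rho_t(\Gamma)$-invariant horoballs for an equivariant truncation). Your explicit verification that the truncated formula is independent of the horoball choice (via the Euclidean vertex link with constant angle sum) is a detail the paper leaves implicit, but otherwise the argument coincides with the paper's.
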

\begin{proof}
Let $\partial_1 \overline M,\ldots,\partial_r \overline M$ be the connected components of $\overline M$ and $L_1,\ldots,L_r$ be the associated peripheral subgroups of $\pi_1(M)$. Since every $\rho_t(L_i)$ is a parabolic subgroup, $\rho_t(L_i)$ has a unique fixed point on $\partial \mathbb H^3$. Hence, to each peripheral subgroup $L_i$, we can associate a $C^1$-smooth path $\eta_i :[0,1]\rightarrow \partial \mathbb H^3$ such that $\eta_i(t)$ is the unique fixed point of $\rho_t(L_i)$.

Since Lemma \ref{lem:equimap} works for cusped manifolds as well, there is a triangulation $\T$ and a continuous nondegenerate $\rho_0$-equivariant map $f_0: \widehat{\widetilde M} \rightarrow \overline{\mathbb H}^3$. We stick to the notation used in the proof of Lemma \ref{lem:equimap}. Each peripheral subgroup $L_i$ is the stabilizer subgroup of an ideal point of $\widehat{\widetilde M}$ in $\pi_1(M)$. Denote by $\xi_i$ the fixed point of $L_i$ on $\widehat{\widetilde M}$. Then we define a one-parameter family $f_t :\widehat{\widetilde M} \rightarrow \overline{\mathbb H}^3$ satisfying
\begin{itemize}
\item[-] $f_t(v_i)=y_i$ and $f_t(\gamma v_i) = \rho_t(\gamma) y_i$,
\item[-] $f_t(\xi_j)=\eta_j(t)$ and $f_t(\gamma \xi_j)=\rho_t(\gamma) \eta_j(t)$,
\end{itemize}
for all $i=1,\ldots,N$, $j=1,\ldots,r$ and $\gamma \in \pi_1(M)$.
Then $f_t$ is a continuous, nondegenerate, $\rho_t$-equivariant map and moreover, it sends an ideal point of $\widehat{\widetilde M}$ to an ideal point of $\overline{\mathbb H}^3$.

Let $\bar e$ denote an edge of $\T_{\widehat M}$ and $\bar s$ a simplex of $\T_{\widehat M}$.
Then each simplex $\bar s$ gives rise to a smooth one-parameter family $f_t(s)$ of geodesic simplices of $\overline{\mathbb H}^3$ of the same type, where $s$ is a lift of $\bar s$ to $\widehat{\widetilde M}$.
Hence it is possible to apply the Schl\"{a}fli formula to the family $f_t(s)$ for any $\bar s \in \T_{\wM}$.
Following the same computation as in Theorem \ref{pathconstant}, we have
\begin{eqnarray}\label{eqn:dervolume}
\frac{d \vol(\rho_t)}{dt}   \bigg|_{t=0} = -\frac{1}{2} \sum_{\bar e} \left( \sum_{\bar s / \bar e \subset \bar s} \epsilon(\bar s)  \frac{d \theta (t;\bar e,\bar s)}{dt} \bigg|_{t=0} \right) l(\bar e),
\end{eqnarray}
where $l(\bar e)$ is the length of the edge obtained by truncating $f_0(s)$ in the way described in the beginning of this section.

Every edge $\bar e$ with finite length makes a trivial contribution to the computation of the differential of the volume of a representation, i.e., $$ \sum_{\bar s / \bar e \subset \bar s} \epsilon(\bar s)  \frac{d \theta (t;\bar e,\bar s)}{dt} \bigg|_{t=0} =0.$$
Since it is possible to choose $\rho_t(\pi_1(M))$-invariant horospheres at the ideal
points $\cup_{i=1}^r \rho_t(\pi_1(M))\eta_i(t)$ when truncating the simplices of the images of $\T$ by $f_t$ for all $t$, it is possible to define the degree of $f_t$ to an edge with infinite length.
Furthermore, it can be seen, as in case where the dimension is greater than or equal to $4$, that the degree of $f_t$ to an edge with infinite length does not change. Hence every edge with infinite length also makes no contribution in Equation (\ref{eqn:dervolume}). Note that in general, it is not possible to choose such invariant horospheres. However, in our situation, it is possible due to the condition that $\rho_t$ map each peripheral subgroup to a parabolic subgroup. Finally we have $$\frac{d \vol(\rho_t)}{dt} =0,$$
which completes the proof.
\end{proof}

The following corollary immediately follows from Theorem \ref{pathconstant3}.

\begin{corollary}\label{compconstant3}
The volume of a representation is constant on each connected component of $\hom_{par}(\pi_1(M),\PSL)$.
\end{corollary}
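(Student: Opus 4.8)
The plan is to derive this immediately from Theorem \ref{pathconstant3}. That theorem shows $\vol(\rho_t)$ is constant along every $C^1$-smooth path lying in $\hom_{par}(\pi_1(M),\PSL)$, so it suffices to prove that any two representations in a common connected component can be joined by a $C^1$-smooth (or at least piecewise $C^1$) path that stays inside $\hom_{par}(\pi_1(M),\PSL)$. This is where the algebraic structure of the variety is used.

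As already observed, $\hom_{par}(\pi_1(M),\PSL)$ is a real algebraic variety, and in particular a semialgebraic set. A connected component $C$ of a semialgebraic set, taken in the Euclidean topology, is again semialgebraic, and by the semialgebraic arcwise-connectedness of connected semialgebraic sets any two points of $C$ can be joined by a continuous semialgebraic arc contained in $C$. Such an arc can be chosen to be piecewise $C^1$, indeed piecewise real-analytic. Thus I would fix $\rho_0$ and $\rho_1$ in a common component $C$ and join them by a piecewise $C^1$ path $\rho_t : [0,1] \to C \subset \hom_{par}(\pi_1(M),\PSL)$.

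It then remains to run a concatenation argument. Choose a partition $0 = t_0 < t_1 < \cdots < t_m = 1$ such that $\rho_t$ is $C^1$ on each $[t_{j-1},t_j]$. Applying Theorem \ref{pathconstant3} to each restriction $\rho_t|_{[t_{j-1},t_j]}$ shows $\vol(\rho_t)$ is constant on every subinterval. Since $\vol$ is continuous on $\hom(\pi_1(M),\PSL)$, the constant values on consecutive subintervals agree at the common endpoints $t_j$, so $\vol(\rho_t)$ is constant on all of $[0,1]$. In particular $\vol(\rho_0) = \vol(\rho_1)$, and since $\rho_0,\rho_1 \in C$ were arbitrary, the volume is constant on $C$.

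The heart of the matter, and the only nontrivial input, is the upgrade from \emph{constancy along a single smooth path} to \emph{constancy on a whole connected component}. Mere topological connectedness would not suffice, because Theorem \ref{pathconstant3} constrains $\vol$ only along $C^1$ paths; one genuinely needs each component to be arcwise connected by admissible paths lying inside the variety, and it is precisely the semialgebraic structure of $\hom_{par}(\pi_1(M),\PSL)$ that provides this. Everything else is the routine gluing argument above, resting on continuity of the volume function.
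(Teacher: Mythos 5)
Your proposal is correct and follows the same route as the paper: the paper simply declares that the corollary ``immediately follows'' from Theorem \ref{pathconstant3}, and what you have written is exactly the standard justification of that step --- semialgebraic arcwise connectedness of the components of the algebraic variety $\hom_{par}(\pi_1(M),\PSL)$, piecewise smoothness of the connecting arcs, and gluing via continuity of the volume function. The only point needing a touch of care is that a continuous semialgebraic arc is analytic only off finitely many points (derivatives may blow up there, as for $\sqrt{t}$ at $0$), but your concatenation argument using continuity of $\vol$ at the break points absorbs this, so there is no gap.
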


Let $\Gamma$ be a nonuniform lattice in $\PSL$.
Then a lattice embedding $i :\Gamma \hookrightarrow \PSL$ is not locally rigid in $\hom(\Gamma,\PSL)$. However, there is no local deformation of the lattice embedding $i$ in $\hom_{par}(\Gamma,\PSL)$. This was proved by verifying $H^1_{par}(\Gamma, Ad\circ i)=0$. Our approach makes it possible to give a proof for this local rigidity theorem in $\hom_{par}(\Gamma,\PSL)$ as follows.

\begin{corollary}
Let $\Gamma$ be a nonuniform lattice in $\mathrm{SO}(3,1)$. Then the connected component of $\mathrm{Hom}_{par}(\Gamma,\mathrm{SO}(3,1))$ containing a lattice embedding $i :\Gamma \hookrightarrow \mathrm{SO}(3,1)$ consists of representations conjugate to $i$ by isometries.
\end{corollary}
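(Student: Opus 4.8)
The plan is to mirror the argument of Corollary \ref{cor:globalrigidity} verbatim, simply replacing its two ingredients by their three-dimensional counterparts. Those two inputs are the constancy of the volume on connected components of $\hom_{par}(\Gamma,\PSL)$, established in Corollary \ref{compconstant3}, together with the volume rigidity theorem of Bucher--Burger--Iozzi (Theorem \ref{BBI}), which is valid for all $n\geq 3$ and in particular in dimension $3$. The point is that the rigidity theorem itself never required $n\geq 4$; only the constancy statement did, and in dimension $3$ constancy has now been recovered precisely on the parabolic subvariety.

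First I would let $i:\Gamma\hookrightarrow\PSL$ be the lattice embedding and write $M=\Gamma\backslash\mathbb H^3$, so that $\vol(i)=\vol(M)$ by definition. Since $i$ sends each peripheral subgroup of $\Gamma$ to a parabolic subgroup, we have $i\in\hom_{par}(\Gamma,\PSL)$, so the statement is meaningful; let $C$ denote the connected component of $\hom_{par}(\Gamma,\PSL)$ containing $i$, and let $\rho\in C$ be arbitrary. Next I would apply Corollary \ref{compconstant3} to conclude that $\vol(\rho)=\vol(i)=\vol(M)$, the volume being constant along $C$. Finally, feeding the equality $|\vol(\rho)|=\vol(M)$ into Theorem \ref{BBI} forces $\rho$ to be conjugate to $i$ by an isometry. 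As $\rho\in C$ was arbitrary, every representation in $C$ is conjugate to $i$, which is exactly the assertion.

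There is essentially no obstacle at the level of this corollary: all the genuine content sits upstream in Corollary \ref{compconstant3} (equivalently Theorem \ref{pathconstant3}), whose proof rests on the three-dimensional Schl\"{a}fli formula for truncated ideal simplices and on the crucial fact that boundary-parabolicity permits a simultaneous choice of $\rho_t(\pi_1(M))$-invariant horoballs along the whole path. The constant value on $C$ is automatically pinned to the maximal value $\vol(M)$ because $i$ itself lies in $C$, and then rigidity supplies the rest.

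The main conceptual point worth stressing, rather than a technical difficulty, is why the restriction to $\hom_{par}$ is indispensable: on the full variety $\hom(\Gamma,\PSL)$ the volume is not locally constant, since an open interval of values $(\vol(\rho'),\vol(i))$ is realized through Dehn fillings as noted earlier in this section. Thus local constancy near $i$, and hence the local rigidity conclusion, can only be expected after imposing the parabolicity condition that makes Corollary \ref{compconstant3} applicable in the first place.
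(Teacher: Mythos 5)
Your proposal is correct and is essentially identical to the paper's own proof: both apply Corollary \ref{compconstant3} to pin the volume on the component at $\vol(i)=\vol(M)$, and then invoke the volume rigidity theorem of Bucher--Burger--Iozzi (Theorem \ref{BBI}), which holds for $n\geq 3$, to conclude conjugacy by an isometry.
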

\begin{proof}
Let $\Gamma$ be a nonuniform lattice in $\PSL$ and $M=\Gamma \backslash \mathbb H^3$. Let $\hom_{par}^0(\Gamma,\PSL)$ be the connected component of $\hom_{par}(\Gamma,\PSL)$ containing a lattice embedding $i:\Gamma \hookrightarrow \PSL$. Corollary \ref{compconstant3} implies that for any representation $\rho \in \hom_{par}^0(\Gamma,\PSL)$, $$\vol(\rho)=\vol(i)=\vol(M).$$
By Theorem \ref{BBI}, $\rho$ is conjugate to $i$ by an isometry.
\end{proof}

As another application, we obtain, for noncompact $3$-manifolds, an analogue of Soma's theorem.

\begin{theorem}
Let $N$ be a connected, orientable, cusped $3$-manifold. Then there are only a finite number of hyperbolic $3$-manifolds dominated by $N$.
\end{theorem}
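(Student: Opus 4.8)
The plan is to reduce the statement about manifolds dominated by $N$ to a finiteness statement about volumes of representations, and then invoke Theorem \ref{pathconstant3} together with the volume rigidity theorem (Theorem \ref{BBI}). Suppose $M=\Gamma\backslash\mathbb H^3$ is a hyperbolic $3$-manifold dominated by $N$, so there is a proper nonzero-degree map $f\co N\to M$. First I would use properness of $f$ to arrange that the cusps of $N$ are sent into the cusps of $M$, so that the induced homomorphism $f_*\co\pi_1(N)\to\pi_1(M)=\Gamma$ carries each peripheral subgroup of $N$ into a peripheral (hence parabolic) subgroup of $M$. Composing $f_*$ with the lattice embedding $i\co\Gamma\hookrightarrow\PSL$ then produces a boundary-parabolic representation $\rho_M:=i\circ f_*\co\pi_1(N)\to\PSL$, i.e.\ $\rho_M\in\hom_{par}(\pi_1(N),\PSL)$.

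The key quantitative input is the inequality $\vol(\rho_M)=|\deg f|\cdot\vol(M)$, or at least $\vol(M)\le |\vol(\rho_M)|$, coming from the definition of the volume of a representation via a pseudo-developing map: the map $f$ lifts to a $\rho_M$-equivariant map $\widehat{\widetilde N}\to\overline{\mathbb H}^3$ and the reformulation $\vol(\rho_M)=\int_N D_{\rho_M}^*\omega_{\mathbb H^3}$ from Section \ref{sec:volume} computes this as $\deg f$ times $\vol(M)$. Next I would note that by Corollary \ref{compconstant3} the volume of a representation is constant on each connected component of $\hom_{par}(\pi_1(N),\PSL)$, and since this is an algebraic variety it has only finitely many connected components; hence the set of values $\{\vol(\rho):\rho\in\hom_{par}(\pi_1(N),\PSL)\}$ is finite. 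In particular $\{\vol(\rho_M):M\text{ dominated by }N\}$ is a finite set of real numbers, say with maximum $V_0$. Combined with $\vol(M)\le|\vol(\rho_M)|\le V_0$, this bounds the volume of every hyperbolic manifold $M$ dominated by $N$ by the uniform constant $V_0$.

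Finally I would invoke the Jorgensen--Thurston theorem that, for each $V_0$, only finitely many orientable hyperbolic $3$-manifolds have volume at most $V_0$ up to the (finite) ambiguity of Dehn filling, together with the fact that a fixed $N$ can dominate a given $M$ in only finitely many homotopy classes once the volume is bounded. Concretely, the finitely many values of $\vol(\rho_M)$ split into the contributions $|\deg f|\cdot\vol(M)$; bounding $\vol(M)\le V_0$ and applying Jorgensen--Thurston yields finiteness of the possible target manifolds $M$.

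The step I expect to be the main obstacle is the first one: controlling the behavior of $f$ near the cusps so that $\rho_M$ is genuinely boundary-parabolic. A proper map need not a priori send peripheral subgroups to parabolics, and one must either homotope $f$ to a proper map respecting the cusp structure or argue directly that the image of each peripheral torus subgroup, being a nontrivial abelian subgroup mapping with appropriate properness, lands in a parabolic subgroup of $\Gamma$. Establishing that $\rho_M\in\hom_{par}(\pi_1(N),\PSL)$ — so that Corollary \ref{compconstant3} applies and the volume takes only finitely many values — is the crux; once the volumes are known to be uniformly bounded, the finiteness of hyperbolic targets is standard.
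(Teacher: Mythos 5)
Your reduction to boundary-parabolic representations and the appeal to Corollary \ref{compconstant3} match the paper's strategy, and the step you flag as the crux is in fact the easy part: properness sends cusp neighborhoods of $N$ into cusp neighborhoods of $M$, so each peripheral subgroup of $\pi_1(N)$ lands in a peripheral (hence parabolic) subgroup of $\Gamma$, and the paper disposes of this in one sentence. The genuine gap is your final step. It is false that only finitely many orientable hyperbolic $3$-manifolds have volume at most $V_0$: by Thurston's hyperbolic Dehn surgery theorem, all but finitely many fillings of a cusped hyperbolic manifold are hyperbolic with volume strictly less than that of the cusped manifold, so any bound $V_0$ above roughly $2.03$ already admits infinitely many hyperbolic manifolds below it. The ``ambiguity of Dehn filling'' is infinite, not finite, so a uniform upper bound $\vol(M)\le V_0$ proves nothing by itself. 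What Jorgensen--Thurston theory does provide is that the volume function is \emph{finite-to-one}: each fixed value is realized by only finitely many manifolds. Hence you need the set of possible values of $\vol(M)$ to be finite, not merely bounded.

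To get finiteness of values you must also control the degree, which your sketch never does. The paper bounds $|\mathrm{deg}\,f|$ via simplicial volume: $|\mathrm{deg}\,f|\cdot\|M\|\le\|N\|<\infty$, where finiteness of $\|N\|$ uses amenability of the fundamental group of $\partial N$, and $\|M\|=\vol(M)/v_3$ is bounded below by the universal lower bound on the volume of hyperbolic $3$-manifolds; consequently $\mathrm{deg}\,f$ takes finitely many nonzero integer values, $\vol(M)=\vol(\rho_M)/\mathrm{deg}\,f$ ranges over a finite set (finitely many numerators by Corollary \ref{compconstant3}, finitely many denominators), and the finite-to-one property of the volume function finishes the proof. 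Alternatively, your own identity $|\vol(\rho_M)|=|\mathrm{deg}\,f|\cdot\vol(M)$ repairs the gap without simplicial volume: since $\vol(M)\ge v_{\min}>0$ and $|\vol(\rho_M)|\le V_0$, one gets $|\mathrm{deg}\,f|\le V_0/v_{\min}$, hence finitely many values of $\mathrm{deg}\,f$ and then finitely many values of $\vol(M)$, to which the finite-to-one property applies. But as written, your concluding deduction is invalid.
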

\begin{proof}
Suppose that a hyperbolic manifold $M$ is dominated by $N$. Then there exists a proper map $f: N \rightarrow M$ with nonzero degree. For any orientable manifold, its fundamental class is well-defined in its locally finite homology. We refer the reader to \cite[Section 2]{BKK} for more details about locally finite homology. Let $[N]$ and $[M]$ be the locally finite fundamental classes of $N$ and $M$ respectively.
Then the degree of $f$, denoted by $\mathrm{deg}f$, is defined as the integer satisfying $$f_*[N]=(\mathrm{deg} f)[M].$$ Then we have
\begin{eqnarray}\label{eqn:degree} |\mathrm{deg}f | \cdot  \|M \| = \| f_* [N] \| \leq \| N \| \end{eqnarray}
where $\|N \|$ denotes the simplicial volume of $N$.

Since the fundamental group of $\partial N$ is amenable, $\| N \|$ is finite.
This follows from the finiteness criterion for simplicial volume in \cite[Theorem 6.1]{Loh}.
Hence $\| M \|$ is finite and so is the volume of $M$. Moreover the proportionality principle in \cite{Th78} for the simplicial volume of hyperbolic $3$-manifolds implies that $$\|M\|=\frac{\vol(M)}{v_3},$$ where $v_3$ is the volume of a regular ideal geodesic simplex in $\mathbb H^3$.
It is a standard fact that there is a lower bound to the volume of hyperbolic $3$-manifolds.
From this fact and Equation (\ref{eqn:degree}), it follows that $| \mathrm{deg}f |$ is uniformly bounded from above. In other words, there are only finitely many possible values for $\mathrm{deg}f$.

A proper map $f:N\rightarrow M$ with nonzero degree gives rise to a representation $\rho : \pi_1(N) \rightarrow \PSL$ which sends each peripheral subgroup of $\pi_1(N)$ to a parabolic subgroup. Thus $\rho \in \hom_{par}(\pi_1(N),\PSL)$. It can be easily seen that $$\vol(\rho)= (\mathrm{deg}f ) \cdot \vol(M).$$
As was seen in Corollary \ref{compconstant3}, there are only finitely many possible values for $\vol(\rho)$. This leads us to conclude that there are only finitely many possible values for $\vol(M)$. Since the volume is a finite-to-one function of hyperbolic manifolds, we finally conclude that there are only finitely many possible hyperbolic $3$-manifolds for $M$.
\end{proof}

\section{$2$-dimensional case}\label{sec:2dimension}

The representation variety of closed surface groups in $\mathrm{SO}(2,1)$ has been intensively studied. Recently, Burger--Iozzi--Wienhard \cite{BIW10} developed the theory of a noncompact surface representation variety in the language of bounded cohomology. In this section, we explore a noncompact surface representation variety in our setting.

Let $\Sigma$ denote a surface of genus $g$ with $r$ punctures. We assume that $\Sigma$ has a negative Euler number. Then $\pi_1(\Sigma)$ has a representation $$\pi_1(\Sigma)=\left\langle a_1,b_1,\ldots,a_g,b_g,c_1,\ldots,c_r \ \Bigg| \ \prod_{i=1}^g [a_i, b_i] \prod_{j=1}^r c_j = id \right\rangle. $$
Since $\pi_1(\Sigma)$ is a free group with $(2g+r-1)$ generators, the representation variety $\hom(\pi_1(\Sigma),\mathrm{SO}(2,1))$ is diffeomorphic to $\mathrm{SO}(2,1)^{2g+r-1}$. The volume of a representation $\rho :\pi_1(\Sigma) \rightarrow \mathrm{SO}(2,1)$ is referred to as \emph{the Toledo number} of $\rho$. The range of the volume of a representation is actually the closed interval $[-2\pi (2g-2+r)  , 2\pi (2g-2+r) ]$.

As we described in Section \ref{sec:volume}, the volume of a representation $\rho :\pi_1(\Sigma) \rightarrow \mathrm{SO}(2,1)$ can be computed by $\vol(\rho)=\langle f^* \bar \omega_2, [\widehat \Sigma] \rangle$,
where $f: \widehat{\widetilde \Sigma} \rightarrow \overline{\mathbb H}^2$ is a continuous $\rho$-equivariant map. Take a triangulation $\T$ and the continuous, nondegenerate, $\rho$-equivariant map $f: \widehat{\widetilde \Sigma} \rightarrow \overline{\mathbb H}^2$ described in Lemma \ref{lem:equimap}. Let $\widehat z =\sum_{\bar s \in \T_{\widehat \Sigma}} \bar s$ be the fundamental cycle induced from the triangulation $\T_{\widehat \Sigma}$ on $\widehat \Sigma$. Then
\begin{eqnarray}\label{eqn:areaeqn} \vol(\rho)= \sum_{\bar s \in \T_{\widehat \Sigma}} \vol_2 \left( f(s(e_0)),f(s(e_1)),f(s(e_2)) \right), \end{eqnarray}
where $s$ is a lift of $\bar s$ to $\widehat{\widetilde \Sigma}$.

Recall that $f(s)$ has at most one ideal vertex for any $\bar s \in \T_{\widehat \Sigma}$. The area of a geodesic $2$-simplex $\sigma$ with internal angles $\theta_1, \theta_2, \theta_3$ is $\pi-(\theta_1+\theta_2+\theta_3)$. Let $\mathrm{A}(\sigma)=\theta_1+\theta_2+\theta_3$. Then we reformulate Equation (\ref{eqn:areaeqn}) as follows:
\begin{eqnarray*}
\vol(\rho) &=& \sum_{\bar s} \epsilon(\bar s)(\pi - \mathrm{A}(\bar s)) \\
&=& \sum_{\bar s} \epsilon(\bar s) \pi - \sum_{\bar v}  \sum_{\bar s/\bar v \subset \bar s} \epsilon(\bar s) \theta(\bar v, \bar s ),
\end{eqnarray*}
where $\bar v$ denotes a vertex of $\T_{\widehat \Sigma}$ and $\theta(\bar v, \bar s )$ is the dihedral angle of $\bar s$ at $\bar v$.

Let $v$ be a lift of $\bar v \in \T_{\widehat \Sigma}$ to $\widehat{\widetilde \Sigma}$. Then $f$ induces a homomorphism $$(f_v)_* : H_1(N_v-\{v\},\mathbb Z) \rightarrow H_1(N_{v'}-\{v'\},\mathbb Z)$$
where $N_v$ denotes a ball neighborhood of $v$ and $v'=f(v)$. Since $$H_1(N_v-\{v\},\mathbb Z) \cong H_1(N_{v'}-\{v'\},\mathbb Z) \cong \mathbb Z,$$
one can define the degree of $f$ at $v$ as usual, denoted by $\textrm{deg}_vf$. Then the following lemma is obvious.

\begin{lemma}\label{lem:2deg}
For any interior vertex $v \in \mathbb H^2$, $$2\pi \mathrm{deg}_v f= \pm \sum_{\bar s/\bar v \subset \bar s} \epsilon(\bar s) \theta(\bar v, \bar s ).$$
\end{lemma}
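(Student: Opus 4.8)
The plan is to read off the identity from the winding number of the image under $f$ of a small loop encircling $v$, exactly as in the codimension-two computation of Besson--Courtois--Gallot \cite[Section 5]{BCG} recorded above for the case $n\geq 4$; in dimension two a codimension-two face is simply a vertex, so the asserted formula is nothing but the two-dimensional instance of that computation, and I would present it as a direct winding-number argument.

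First I would fix a generator $g_v$ of $H_1(N_v-\{v\},\mathbb Z)\cong\mathbb Z$ represented by a small embedded circle $C$ around $v$, oriented compatibly with the chosen orientations of $\mathbb H^2$ and of the link of $v$, together with a generator $g_{v'}$ of $H_1(N_{v'}-\{v'\},\mathbb Z)\cong\mathbb Z$, so that $(f_v)_* g_v = (\mathrm{deg}_v f)\, g_{v'}$ by definition. Since $v$ is an interior vertex we have $v'=f(v)\in\mathbb H^2$, and after shrinking $C$ it meets each $2$-simplex $s$ of the star of $v$ in a single arc; as $s$ ranges over the triangles containing $v$, these arcs are arranged cyclically about $v$. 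Under $f$ each such $s$ is carried by Lemma \ref{lem:equimap} to a nondegenerate geodesic triangle $s'$ with one vertex at $v'$, and the arc $f(C\cap s)$ runs between the two geodesic edges of $s'$ emanating from $v'$; hence, viewed from $v'$, this arc subtends precisely the dihedral angle $\theta(\bar v,\bar s)=\theta(v',s')$.

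The key step is then to track the argument of $f(C)-v'$ as $C$ is traversed once. The total change of this argument is $2\pi$ times the winding number of $f(C)$ about $v'$, and under the identification of $H_1(N_{v'}-\{v'\},\mathbb Z)$ with winding numbers this equals $\pm\mathrm{deg}_v f$. On the other hand, inside each triangle $s'$ the argument changes by $\pm\theta(\bar v,\bar s)$, and the sign is exactly $\epsilon(\bar s)$: by definition $\epsilon(\bar s)=+1$ precisely when $f\colon s\to s'$ preserves orientation, which is the same as the condition that traversing $C\cap s$ positively makes the argument of $f(C)-v'$ increase. Summing the contributions over all triangles of the star gives $\sum_{\bar s/\bar v\subset\bar s}\epsilon(\bar s)\,\theta(\bar v,\bar s)$ for the total argument change, and comparing the two evaluations yields
$$2\pi\,\mathrm{deg}_v f = \pm\sum_{\bar s/\bar v\subset\bar s}\epsilon(\bar s)\,\theta(\bar v,\bar s),$$
the global sign absorbing the orientation conventions relating $g_v$, $g_{v'}$, and the sense of $C$.

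The routine points are that $C$ may be chosen to meet each star triangle in a single arc and that nondegeneracy of each $s'$ keeps every $\theta(\bar v,\bar s)$ a well-defined positive angle, so no arc degenerates. I expect the only genuinely delicate step to be the sign bookkeeping: one must confirm that the per-triangle sign of the argument increment coincides with $\epsilon(\bar s)$ and that the cyclic contributions telescope to exactly $2\pi$ times the winding number rather than to some other multiple. This is where I would concentrate the care, though it is the same orientation argument that underlies the codimension-two formula already invoked in the higher-dimensional case.
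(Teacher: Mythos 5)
Your proof is correct and takes essentially the same approach the paper intends: the paper states the lemma as ``obvious,'' implicitly as the two-dimensional instance of the codimension-two transverse-degree formula $2\pi\,\mathrm{deg}_F f_0 = \pm \sum \epsilon(s')\theta(F',s')$ quoted in Section 5 with reference to \cite{BCG}, and your winding-number argument is precisely that computation spelled out for a vertex in place of a codimension-two face. The sign bookkeeping you flag is handled correctly by identifying the per-triangle sign of the angular increment with $\epsilon(\bar s)$, so there is no gap.
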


Lemma \ref{lem:2deg} implies that every interior vertex of $\T_{\widehat \Sigma}$ makes a trivial contribution to the computation of the differential of the volume of a representation. In other words, the ideal vertices of $\T_{\widehat \Sigma}$ determine the differential of the volume of  representations. However, an ideal vertex of $\T_{\widehat \Sigma}$ with its image in $\partial \mathbb H^2$ does not also contribute to the computation of the differential of the volume of a representation since the angle of every ideal geodesic simplex in $\mathbb H^2$ at any ideal vertex is zero. Hence consider a subset of $\hom(\pi_1(\Sigma),\mathrm{SO}(2,1))$ defined by
\begin{eqnarray*} \lefteqn{\hom_\partial(\pi_1(\Sigma),\mathrm{SO}(2,1))=\{ \rho  \ | \ \rho(c_i) \text{ has at least}} \\ & & \ \ \ \ \ \ \ \ \ \ \ \ \text{one fixed point in }\partial \mathbb H^2 \text{ for all }i=1,\ldots,r \}.\end{eqnarray*}
The set $\hom_\partial(\pi_1(\Sigma),\mathrm{SO}(2,1))$ is a semialgebraic set. Hence it has finitely many connected components. In fact, Burger--Iozzi--Wienhard \cite{BIW10} proved that the volume of a representation is constant on each connected component of $\hom_\partial(\pi_1(\Sigma),\mathrm{SO}(2,1))$. We give a simple proof for this from the point of our view.

\begin{theorem}\label{thm:2hyp}
The volume of a representation is constant on each connected component of $\hom_\partial(\pi_1(\Sigma),\mathrm{SO}(2,1))$.
\end{theorem}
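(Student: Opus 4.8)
The plan is to adapt the strategy of Theorem \ref{pathconstant}, with the Schl\"afli formula replaced by the two-dimensional Gauss--Bonnet relation $\vol_2(\sigma)=\pi-\mathrm A(\sigma)$ recorded just above. Since $\hom_\partial(\pi_1(\Sigma),\mathrm{SO}(2,1))$ is semialgebraic, each of its connected components is path-connected by a piecewise $C^1$-smooth path, so it suffices to show that $\vol(\rho_t)$ is constant along an arbitrary $C^1$-smooth path $\rho_t$, $t\in[0,1]$, contained in a single component.

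First I would produce a one-parameter family of equivariant maps exactly as in Lemma \ref{lem:equimap}: fix base values $y_1,\dots,y_N\in\mathbb H^2$ at the interior vertices and set $f_t(v_j)=y_j$, $f_t(\gamma v_j)=\rho_t(\gamma)y_j$. For each puncture $c_i$ the defining condition of $\hom_\partial$ guarantees that $\rho_t(c_i)$ fixes a point of $\partial\mathbb H^2$; choosing such a point $\eta_i(t)\in\partial\mathbb H^2$ and putting $f_t(\xi_i)=\eta_i(t)$, $f_t(\gamma\xi_i)=\rho_t(\gamma)\eta_i(t)$ produces a continuous, nondegenerate, $\rho_t$-equivariant map which sends every ideal vertex of $\T$ into $\partial\mathbb H^2$. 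I would then differentiate the area formula
$$\vol(\rho_t)=\pi\sum_{\bar s}\epsilon(\bar s)-\sum_{\bar v}\Big(\sum_{\bar s/\bar v\subset\bar s}\epsilon(\bar s)\,\theta(\bar v,\bar s;t)\Big).$$
The triangulation is fixed and each $\epsilon(\bar s)$ is locally constant, so the first sum is an integer independent of $t$ and drops out. For an interior vertex $\bar v$ (image in $\mathbb H^2$), Lemma \ref{lem:2deg} identifies the bracketed signed angle sum with $\pm2\pi\,\mathrm{deg}_{\bar v}f_t$; as $f_t$ varies continuously and nondegenerately this transverse degree is a locally constant integer, precisely as for the transverse degrees to codimension-$2$ faces in Theorem \ref{pathconstant}, so its $t$-derivative vanishes. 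For an ideal vertex $\bar v$ (image in $\partial\mathbb H^2$) every triangle $\bar s$ meeting $\bar v$ is ideal there, whence $\theta(\bar v,\bar s;t)\equiv0$ and the bracket is identically zero. Thus $\frac{d}{dt}\vol(\rho_t)=0$ and $\vol(\rho_t)$ is constant. Equivalently, these two observations show $\vol(\rho)\in\pi\mathbb Z$ for every $\rho\in\hom_\partial$, so constancy on components also follows directly from the continuity of $\vol$ together with the discreteness of $\pi\mathbb Z$.

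The \emph{delicate point}, as in the higher-dimensional arguments, is the regularity of the boundary data. Unlike the boundary-parabolic situation of Theorem \ref{pathconstant3}, where each $\rho_t(c_i)$ is parabolic with a single boundary fixed point varying smoothly in $t$, here $\rho_t(c_i)$ may be hyperbolic (two boundary fixed points) and may degenerate to parabolic as its eigenvalues collide; consequently a globally $C^1$ choice of $\eta_i(t)$, and the nondegeneracy of $f_t$, need not persist for all $t$. I expect this to be the main obstacle. I would handle it exactly as in Theorem \ref{pathconstant}: restrict to the open subset of $t\in[0,1]$ on which the fixed points $\eta_i(t)$ can be chosen $C^1$-smoothly and every simplex of $\T$ stays nondegenerate, observe that this subset is open and dense, conclude from the computation above that $\frac{d}{dt}\vol(\rho_t)=0$ there, and finally use the $C^1$-smoothness of $t\mapsto\vol(\rho_t)$ to propagate the vanishing of the derivative to the entire interval. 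This yields constancy of $\vol$ along the path, and hence on each connected component.
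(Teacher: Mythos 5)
Your core computation is exactly the paper's proof: the paper writes $\vol(\rho_t)=\sum_{\bar s}\epsilon(\bar s)\pi-\sum_{\bar v}\sum_{\bar s/\bar v\subset\bar s}\epsilon(\bar s)\theta(t;\bar v,\bar s)$, kills the ideal-vertex terms because the angles there vanish identically, and uses Lemma \ref{lem:2deg} to see that each interior-vertex bracket equals $\pm 2\pi\,\mathrm{deg}_v f_t$, hence does not change along the path. The differences are in the wrapper, and they are worth recording. First, the paper never differentiates and never needs a dense set of parameters: since each bracket is literally constant in $t$ (an integer multiple of $2\pi$, or zero), it concludes $\vol(\rho_t)=\vol(\rho_{t'})$ outright; your derivative-plus-density formulation is a detour, and note that its propagation step needs the (unproved, though also invoked by the paper in Theorem \ref{pathconstant}) $C^1$-smoothness of $t\mapsto\vol(\rho_t)$, since continuity plus vanishing derivative on a dense open set does not by itself force constancy. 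Second, your ``delicate point'' is genuine but is simply glossed over by the paper, which asserts the existence of a continuous nondegenerate family $f_t$ sending ideal points to $\partial\mathbb H^2$ without addressing hyperbolic-to-parabolic transitions or parameters where some $\rho_t(c_i)$ becomes trivial; observe, though, that because the argument only needs the brackets to be locally constant (not differentiable), a continuous choice of $\eta_i(t)$ suffices, and $C^1$-regularity of the fixed points is never required. Third, your closing remark is the cleanest route of all: the same formula shows $\vol(\rho)\in\pi\mathbb Z$ for every $\rho\in\hom_\partial(\pi_1(\Sigma),\mathrm{SO}(2,1))$, using only one nondegenerate equivariant map per representation (Lemma \ref{lem:equimap}) rather than a family, and then continuity of $\vol$ on the representation variety gives local constancy with no regularity issues at all; this integrality argument is stronger and more robust than what the paper writes, and you would do well to promote it from an aside to the actual proof.
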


\begin{proof}
Let $\rho_t :\pi_1(\Sigma) \rightarrow \mathrm{SO}(2,1)$ be a $C^1$-smooth path.
There is a one-parameter family $f_t : \widehat{\widetilde \Sigma} \rightarrow \overline{\mathbb H}^2$ such that $f_t$ is a continuous, nondegenerate $\rho_t$-equivariant map and $f_t$ sends  ideal points on $\widehat{\widetilde \Sigma}$ to ideal points on $\partial \mathbb H^2$. Then $$\vol(\rho_t)= \sum_{\bar s} \epsilon(\bar s) \pi - \sum_{\bar v}  \sum_{\bar s/\bar v \subset \bar s} \epsilon(\bar s) \theta(t;\bar v, \bar s )$$
where $\theta(t;\bar v, \bar s )$ denotes the dihedral angle of $f_t(s)$ at $f_t(v)$.

If $\bar v$ is an ideal vertex, then $\theta(t;\bar v, \bar s )=0$ for all $\bar s$ containing $\bar v$  because $f_t(v)$ is on $\partial \mathbb H^2$, i.e., $$\sum_{\bar s/\bar v \subset \bar s} \epsilon(\bar s)  \theta(t;\bar v, \bar s )=0.$$
If $\bar v $ is not an ideal vertex, $f_t$ does not change either the degree at $\bar v$ or the orientation on $\bar s$ for any $\bar s \in \T_{\widehat \Sigma}$ since $f_t(s)$ is a $C^1$-smooth one-parameter family of geodesic simplices. In other words, $$\sum_{\bar s/\bar v \subset \bar s} \epsilon(\bar s)  \theta(t;\bar v, \bar s )=\sum_{\bar s/\bar v \subset \bar s} \epsilon(\bar s) \theta(t';\bar v, \bar s ),$$ for any $t, t' \in [0,1]$.
This leads us to conclude that $\vol(\rho_t)$ is constant, which implies the theorem.
\end{proof}

\end{document}